\newtheorem{thm}{Theorem}[section]
\newtheorem{theorem}{Theorem}[section]
\newtheorem{lemma}[thm]{Lemma}
\newtheorem{question}[thm]{Question}
\newtheorem{claim}[thm]{Claim}
\theoremstyle{definition}
\newtheorem{definition}[thm]{Definition}
\theoremstyle{remark}
\newtheorem{remark}[thm]{Remark}
\newtheorem{convention}[thm]{Convention}
\newtheorem{notation}[thm]{Notation}
\let\c@equation\c@thm
\numberwithin{equation}{section}
\title[Reflection principles, GCH and the uniformization properties]{Reflection principles, GCH and the uniformization properties}
\author{Jing Zhang}
\newcommand{\Addresses}{{
  \bigskip
  \footnotesize

 \textsc{Department of Mathematics,\\ Bar-Ilan University, Ramat-Gan 5290002, Israel}\par\nopagebreak
  \textit{E-mail}: \texttt{jingzhan@alumni.cmu.edu}
}}
\begin{document}

\begin{abstract}
Reflection principles (or dually speaking, compactness principles) often give rise to combinatorial guessing principles. Uniformization properties, on the other hand, are examples of anti-guessing principles. We discuss the tension and the compatibility between reflection principles and uniformization properties at the level of the second uncountable cardinal. 
\end{abstract}

\maketitle
\let\thefootnote\relax\footnotetext{2010 \emph{Mathematics Subject Classification}. Primary: 03E02, 03E35, 03E55. 

Supported by the Foreign Postdoctoral Fellowship Program of the Israel Academy of Sciences and Humanities and by the Israel Science Foundation (grant agreement 2066/18).
}

\section{Introduction, backgrounds and preliminaries}

Reflection principles usually entail certain combinatorial guessing principles. For example, at the level of inaccessible cardinals, $\kappa$ being a measurable cardinal (or just a subtle cardinal) implies $\diamondsuit(\kappa)$ holds, due to Jensen-Kunen \cite{JensenKunen}. If $\kappa$ is supercompact, then there exists a function from $\kappa$ to $V_\kappa$ that exhibits very strong guessing properties, due to Laver \cite{MR0472529}. See \cite{Hamkins:LaverDiamond} for a finer analysis at smaller large cardinals. There are also studies on the compatibility of compactness principles and the failure of guessing principles at inaccessible cardinals, see for example \cite{CummingsWoodin}, \cite{MR3960897}, \cite{MR3914938}, \cite{MR2279655}, \cite{golshani2016weak}, 

At the level of successor cardinals, there are also analogous phenomena. However, to avoid trivialities, we must be careful about the questions asked. Recall by a celebrated result of Shelah \cite{MR2596054}, for any uncountable cardinal $\kappa$, $2^\kappa=\kappa^+$ implies $\diamondsuit(S)$ holds for any stationary $S\subset \{\alpha<\kappa^+: \mathrm{cf}(\alpha)\neq \mathrm{cf}(\kappa)\}$. In the presence of GCH, it still makes sense to ask the validity of diamond at the \emph{critical cofinality}, namely $\diamondsuit(E^{\kappa^+}_{\mathrm{cf}(\kappa)})$. The study now is naturally divided into two cases: successors of a regular uncountable cardinal, or successors of a singular cardinal. For more information regarding the case on the successors of a singular cardinal, see for example \cite{MR2587470}, \cite{MR2869191}, \cite{MR2723781}, \cite{MR2777747}.

We will focus on successors of a regular uncountable cardinal in this paper. For simplicity, we will deal with the second uncountable cardinal $\omega_2$; the generalization to any other successor of a regular uncountable cardinal is straightforward. It is a theorem of Shelah \cite{MR597452} that it is consistent with GCH that $\diamondsuit(E^{\omega_2}_{\omega_1})$ fails. In fact, he obtained a stronger conclusion. To state the result, we need some definitions. Let $S^2_0$ denote $E^{\omega_2}_{\omega}$ and $S^2_1$ denote $E^{\omega_2}_{\omega_1}$ for the rest of this article.

\begin{definition}
A sequence of functions $\bar{\eta}=\langle \eta_\delta: \delta\in S^2_1\rangle$ is called a \emph{ladder system} on $S^2_1$ if for each $\delta\in S^2_1$, $\eta_\delta$ is an unbounded subset of $\delta$ of order type $\omega_1$. A \emph{ladder coloring} on $\bar{\eta}$ is a sequence of functions $\bar{c}=\langle c_\delta\in {}^{\eta_\delta} 2: \delta\in S^2_1 \rangle$. A ladder coloring $\bar{c}$ is constant if  each $c_\delta$ is a constant function.
\end{definition}

\begin{definition}
Given a ladder system $\bar{\eta}$ on $S^2_1$, we say the \emph{2-uniformization property} holds for $\bar{\eta}$ (abbreviated as $\mathrm{Unif}_2(\bar{\eta})$), if for any ladder coloring $\bar{c}$ on $\bar{\eta}$, there exists a \emph{uniformizing function} $h: \omega_2\to 2$ for $\bar{c}$, namely $h$ satisfies that for any $\delta\in S^2_1$, there exists $\gamma<\delta$ such that $h(\gamma')=c_\delta(\gamma')$ for all $\gamma'\in \eta_\delta - \gamma$.
\end{definition}

\begin{remark}
For any cardinal $\lambda$ and any ladder system $\bar{\eta}$ on $S^2_1$, we can define the \emph{$\lambda$-uniformization property} or $\mathrm{Unif}_\lambda(\bar{\eta})$ analogously. 
\end{remark}

It is not hard to see that $\diamondsuit(S^2_1)$ implies $\mathrm{Unif}_2(\bar{\eta})$ fails for any ladder system $\bar{\eta}$ on $S^2_1$. In the model of Shelah \cite{MR597452}, GCH holds and there exists a ladder system $\bar{\eta}$ on $S^2_1$ such that $\mathrm{Unif}_2(\bar{\eta})$ holds. Even better, the ladder system in that model witnessing the uniformization property is somewhat ``large''.

\begin{definition}
Given ladder systems $\bar{\eta}$ and $\bar{\nu}$ on $S^2_1$, we say 
\begin{enumerate}
\item $\bar{\eta}$ is \emph{club} if each $\eta_\delta$ is a closed unbounded subset of $\delta$;
\item  $\bar{\eta}$ is \emph{stationary} if each $\eta_\delta$ is a stationary subset of $\delta$;
\item  $\bar{\eta}$ is \emph{indexed by $T$} (where $T\subset \omega_1$) \emph{with respect to $\bar{\nu}$} if for each $\delta\in S^2_1$, $\eta_\delta= \nu_\delta[T]$, where $\{\nu_\delta(i): i<\omega_1\}$ is the increasing enumeration of $\nu_\delta$ and $\nu_\delta[T]=\{\nu_\delta(i): i\in T\}$. We write $\bar{\eta}=\bar{\nu}\restriction T$;
\item $\bar{\eta}$ is \emph{indexed by $T$} if there exists a club ladder system $\bar{\nu}$ such that $\bar{\eta}$ is indexed by $T$ with respect to $\bar{\nu}$.
\end{enumerate}
\end{definition}

The ladder system $\bar{\eta}$ in Shelah's model witnessing that $\mathrm{Unif}_2(\bar{\eta})$ holds is indexed by a stationary co-stationary subset of $\omega_1$. This in some sense is that best we can do, since by another theorem of Shelah \cite[Page 982, Section 3]{MR1623206}, ZFC proves that $\neg \mathrm{Unif}_2(\bar{\eta})$ for any club ladder system $\bar{\eta}$ on $S^2_1$.

It is well-known that unlike $\omega_1$, there are many compactness principles that can consistently hold at $\omega_2$. Hence, though GCH is consistent with $\neg \diamondsuit(S^2_1)$, GCH along with certain assumptions asserting that $\omega_2$ is ``compact'' may imply $\diamondsuit(S^2_1)$.

\begin{definition}
We let 
\begin{enumerate}
\item $\mathrm{Refl}(S^2_0)$ abbreviate the assertion that \emph{every stationary subset $S\subset S^2_0$ reflects}, namely there exists $\delta\in S^2_1$ such that $S\cap \delta$ is stationary in $\delta$;
\item $\mathrm{Refl}_*(S^2_0)$ abbreviate the assertion that \emph{every stationary subset $S\subset S^2_0$ reflects to a club}, namely there exists $\delta\in S^2_1$ such that $S\cap \delta$ contains a club subset of $\delta$.
\item $\mathrm{Refl}_T(S^2_0)$ (where $T\subset \omega_1$ is stationary) abbreviate the assertion that \emph{every stationary subset $S\subset S^2_0$ reflects with pattern $T$}, namely: for any club ladder system $\langle \nu_\delta: \delta\in S^2_1\rangle$ and any stationary $S\subset S^2_0$, there exists $\delta\in S^2_1$ such that $S\cap \nu_\delta [T]$ is stationary in $\delta$.
\end{enumerate}
\end{definition}

Note that 
\begin{itemize}
\item $\mathrm{Refl}_*(S^2_0) \Rightarrow \mathrm{Refl}_T(S^2_0)$ for all stationary $T\subset \omega_1$ and
\item for stationary sets $T\subset T'\subset \omega_1$, $\mathrm{Refl}_T(S^2_0) \Rightarrow \mathrm{Refl}_{T'}(S^2_0)$ and 
\item  $\mathrm{Refl}(S^2_0) \Leftrightarrow \mathrm{Refl}_{\omega_1}(S^2_0)$.
\end{itemize}

Suppose $\mathrm{Refl}_*(S^2_0)$ holds and $2^{\omega_1}=\omega_2$, then $\diamondsuit(S^2_1)$ must also hold (see \cite{MR924672}). For generalizations and variations, see \cite{fakereflection}, \cite{MR656600}. The following local version of a theorem due to Shelah \cite{ShelahStrange} gives another scenario certifying that $\omega_2$ is ``compact enough'' to imply $\diamondsuit(S^2_1)$. 

\begin{definition}
Let $I$ be a countably complete ideal on $\omega_1$.
We say that
\begin{enumerate}
\item $I$ is precipitious if whenever $U$ is a generic $V$-ultrafilter added after forcing $P(\omega_1)/I$ over $V$, $\mathrm{Ult}(V, U)$ is well-founded,
\item $I$ is saturated if $P(\omega_1)/I$ is $\omega_2$-c.c and
\item $I$ is pre-saturated if $P(\omega_1)/I$ is precipitious and $P(\omega_1)/I$ preserves $\omega_2^V$ as a cardinal.
\end{enumerate} 
 
\end{definition}

\begin{theorem}[Shelah]\label{ShelahStarting}
Suppose $2^{\omega_1}=\omega_2$. Suppose further that there exists a stationary set $T\subset \omega_1$ such that $\mathrm{Refl}_T(S^2_0)$ and $\mathrm{NS}_{\omega_1}\restriction T$ is saturated. Then $\diamondsuit(S^2_1)$ holds.
\end{theorem}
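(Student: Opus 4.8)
The plan is to use the saturated restriction $\mathrm{NS}_{\omega_1}\restriction T$ to produce a generic elementary embedding, build a candidate $\diamondsuit(S^2_1)$-sequence $\bar D$ in $V$ out of $2^{\omega_1}=\omega_2$ by a minimal-counterexample recursion, and then, assuming toward a contradiction that $\bar D$ fails, use $\mathrm{Refl}_T(S^2_0)$ to manufacture inside the generic ultrapower a strictly smaller counterexample.

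First I would force with $\mathbb P=P(T)/(\mathrm{NS}_{\omega_1}\restriction T)$ and in a generic extension $V[G]$ form the generic ultrapower $j\colon V\to M\cong\mathrm{Ult}(V,U)$, where $U$ is the $V$-ultrafilter on $P(\omega_1)^V$ derived from $G$. Saturation yields the standard package: $M$ is well-founded (identify it with its transitive collapse), $\mathrm{crit}(j)=\omega_1^V$, $j(\omega_1^V)=\omega_2^V$, the forcing $\mathbb P$ is $\omega_2$-c.c.\ so $\omega_1$ and $\omega_2$ are preserved, and $M$ is closed under $\omega_1$-sequences from $V[G]$; in particular $P(\omega_1)^V\subseteq M$, so by $2^{\omega_1}=\omega_2$ all of $H(\omega_2)^V$ is available inside $M$. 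Everything then passes through elementarity of $j$: for a candidate $\bar D=\langle D_\delta:\delta\in S^2_1\rangle$ and any $X\subseteq\omega_2$ in $V$, the assertion ``$\{\delta\in S^2_1: X\cap\delta=D_\delta\}$ is stationary in $\omega_2$'' holds in $V$ if and only if its $j$-image holds in $M$; hence it suffices to produce one $\bar D\in V$ whose image $j(\bar D)$ is believed by $M$ to guess every $j(X)$, $X\in P(\omega_2)^V$, stationarily on $j(S^2_1)$.

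Second, the construction of $\bar D$. Using $2^{\omega_1}=\omega_2$, fix a wellorder $\prec$ of $H(\omega_3)$ whose restriction to $H(\omega_2)$ has order type $\omega_2$ and is lexicographic in the natural bit-ordering (and refine $\prec$ to a wellorder of pairs, ordered first by the ``$Y$-coordinate''); also fix a club ladder system $\bar\nu$ on $S^2_1$. By recursion on $\delta\in S^2_1$, let $D_\delta$ be the $\prec$-least $Y\subseteq\delta$ that is a \emph{potential local counterexample} to $\bar D\restriction\delta$, that is, such that for some club $Z\subseteq\delta$ one has $Y\cap\beta\neq D_\beta$ for all $\beta\in Z\cap S^2_1$ (and $D_\delta=\emptyset$ if none exists), arranged to be absolute enough that it commutes with $j$ on the relevant initial segments. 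Now suppose $\bar D$ is not a $\diamondsuit$-sequence and let $(X^*,C^*)$ be the $\prec$-least global counterexample. Restricting it: for every $\delta\in S^2_1$ which is a limit point of $C^*$, the pair $(X^*\cap\delta,\,C^*\cap\delta)$ is a potential local counterexample, so $D_\delta\preceq X^*\cap\delta$; and since $(X^*,C^*)$ is a genuine counterexample we have $D_\delta\neq X^*\cap\delta$ for all $\delta\in C^*\cap S^2_1$, hence $D_\delta\prec X^*\cap\delta$ strictly on a club of $\delta\in C^*\cap S^2_1$.

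Third, the contradiction, computed in $M$. Choose $\delta^\dagger\in j(S^2_1)$ of the form $[f]_U$ with $f\colon T\to C^*\cap S^2_1$ ranging over limit points of $C^*$; then $\delta^\dagger\in j(C^*)$ with $j(C^*)\cap\delta^\dagger$ club in $\delta^\dagger$, and $Y^\dagger:=j(\bar D)(\delta^\dagger)=[\xi\mapsto D_{f(\xi)}]_U$ is a potential local counterexample at $\delta^\dagger$ satisfying $Y^\dagger\, j(\prec)\, j(X^*)\cap\delta^\dagger$ strictly, with the strict inequality witnessed below some $\gamma^\dagger<\delta^\dagger$. To contradict the $j(\prec)$-minimality of $(j(X^*),j(C^*))$ it now suffices to extend $Y^\dagger$ to a \emph{global} counterexample $(Y',C')$ of $M$ with $Y'\cap(\gamma^\dagger+1)=Y^\dagger\cap(\gamma^\dagger+1)$ (so that $Y'\, j(\prec)\, j(X^*)$ lexicographically); equivalently, one needs a set $Y'\supseteq Y^\dagger$ and a club $C'\subseteq j(\omega_2^V)$ with $Y'\cap\beta\neq j(\bar D)(\beta)$ for all $\beta\in C'\cap j(S^2_1)$. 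This is exactly where $\mathrm{Refl}_T(S^2_0)$ does the work that $2^{\omega_1}=\omega_2$ cannot do by itself: I would encode the failure of $\bar D$ to guess $X^*$ (along the witnessing club $C^*$) as a stationary set $S\subseteq S^2_0$, and use reflection of $S$ with pattern $T$ along a club ladder system refining $\bar\nu$ to see that $\delta^\dagger$ can be chosen so that $Y^\dagger$ already coheres with a genuine global counterexample of $M$ above $\delta^\dagger$. The point is that the reflection pattern $T$ is the very same $T$ carrying the ideal, so ``$S$ reflects with pattern $T$'' translates into the favourable set of coordinates $\xi$ being $U$-large (indeed club relative to $T$), which is what makes such a $\delta^\dagger$ exist; pulling the resulting global counterexample back through $j$ contradicts the choice of $(X^*,C^*)$.

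The hard part is precisely this last step. Extending a potential local counterexample to a genuine global one is \emph{not} possible under $2^{\omega_1}=\omega_2$ alone — if it always succeeded one would obtain $\diamondsuit(S^2_1)$ outright, contradicting Shelah's uniformization model — so the entire burden lies in organizing the reflection so that at the relevant generic-ultrapower point the extension does succeed, and in making the single stationary set $T$ serve simultaneously as the reflection pattern and as the support of the saturated ideal, so that reflection with pattern $T$ and $U$-largeness match up. Managing this coordination, together with the bookkeeping that lets the minimal-counterexample recursion commute with $j$, is the technical core; the generic-ultrapower scaffolding and the lexicographic-wellorder device are routine.
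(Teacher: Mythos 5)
There is a genuine gap, and it sits exactly where you say the ``entire burden lies'': you never actually carry out the step of extending the potential local counterexample $Y^\dagger$ at $\delta^\dagger$ to a global counterexample of $M$. The sentence ``I would encode the failure \dots and use reflection \dots to see that $\delta^\dagger$ can be chosen so that $Y^\dagger$ already coheres with a genuine global counterexample of $M$'' is a statement of intent, not an argument. In particular, $\mathrm{Refl}_T(S^2_0)$ only produces, in $V$, a single reflection point $\delta$ for a given stationary set and club ladder system; you do not explain how this is converted into $U$-largeness of the favourable set of coordinates for the particular generic ultrafilter $U$ (at best one gets a positive set, hence a condition in $P(T)/\mathrm{NS}_{\omega_1}$ forcing the desired statement, which would have to be arranged into the choice of $G$ from the outset --- none of this bookkeeping appears).

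More decisively, the strategy as outlined cannot be completed, because the only consequences of saturation you invoke (well-foundedness of the generic ultrapower, $\mathrm{crit}(j)=\omega_1^V$, $j(\omega_1^V)=\omega_2^V$, preservation of $\omega_2$, closure of $M$ under $\omega_1$-sequences from $V[G]$) already follow from \emph{pre}saturation of $\mathrm{NS}_{\omega_1}\restriction T$. But Theorem \ref{stationaryversion} and the summary at the end of Section \ref{stationaryladder} show that GCH together with $\mathrm{Refl}_T(S^2_0)$, presaturation of $\mathrm{NS}_{\omega_1}\restriction T$, and $\neg\diamondsuit(S^2_1)$ is consistent; so any argument that uses only those consequences of the hypothesis must fail somewhere. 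The paper's own proof is far more elementary and uses saturation in an essentially different place: it fixes a $\diamondsuit(S^2_0)$-sequence (available from $2^{\omega_1}=\omega_2$), for each $\delta\in S^2_1$ forms the unions $B^\delta_{T'}=\bigcup_{\xi\in T'}S_{\nu_\delta(\xi)}$ over a maximal family $A'_\delta$ of pairwise almost-disjoint (modulo $\mathrm{NS}_{\omega_1}$) stationary $T'\subseteq T$ along which the guesses cohere, and uses saturation precisely to bound $|A'_\delta|$ by $\aleph_1$; this yields a $\diamondsuit^-(S^2_1)$-sequence ($\aleph_1$ many guesses at each $\delta$), which is equivalent to $\diamondsuit(S^2_1)$ by Kunen's theorem. $\mathrm{Refl}_T(S^2_0)$ enters only in the verification, to find a $\delta$ at which the set guessed on $S^2_0$ reflects along $\nu_\delta[T]$. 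Your single-guess minimal-counterexample recursion bypasses $\diamondsuit^-$ entirely and has no analogous place where the antichain bound could do any work.
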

\begin{proof}
By \cite{MR2596054}, $2^{\omega_1}=\omega_2$ implies $\diamondsuit(S^2_0)$ holds. We will show $\diamondsuit^-(S^2_1)$ holds, which is well-known to be equivalent to $\diamondsuit(S^2_1)$ by a result of Kunen.
Let $\langle S_\alpha: \alpha\in S^2_0\rangle$ be a $\diamondsuit(S^2_0)$-sequence. Fix a club ladder system $\bar{\nu}=\langle \nu_\delta \subset S^2_0 : \delta\in S^2_1\rangle$. Fix $\delta\in S^2_1$. Consider $A_\delta=_{def}\{T'\subset T: T'\text{ is stationary and for any }\xi<\xi'\in T', S_{\nu_\delta(\xi)}\sqsubseteq S_{\nu_\delta(\xi')}\}$. For each $T'\in A_\delta$, let $B^\delta_{T'}=\bigcup_{\xi\in T'} S_{\nu_\delta(\xi)}$. Let $A'_\delta$ be a maximal subset of $A_\delta$ satisfying that for any $C, D\in A'_\delta$, $C\cap D$ is non-stationary. As $\mathrm{NS}_{\omega_1}\restriction T$ is saturated, $|A'_\delta|\leq \aleph_1$. Let $\mathcal{C}_\delta=\{B^\delta_{T'}:  T'\in A'_\delta\}$. We claim $\langle \mathcal{C}_\delta: \delta\in S^2_1\rangle$ is a $\diamondsuit^-(S^2_1)$-sequence.

Suppose $X\subset \omega_2$ is given. There exists a stationary $S\subset S^2_0$ such that for all $\alpha\in S$, $X\cap \alpha = S_\alpha$. Find $\delta\in S^2_1$ such that $S\cap \nu_\delta[T]$ is stationary. Therefore, there is a stationary $T'\subset T$ such that $\nu_\delta[T']=S\cap \nu_\delta[T]$ and $T'\in A_\delta$. By the maximality of $A'_\delta$, there exists $T^*\in A'_\delta$ such that $T^*\cap T'$ is stationary. This implies $B^\delta_{T^*}=B^{\delta}_{T'}=X\cap \delta\in \mathcal{C}_\delta$.
\end{proof}

It is now a natural question whether we can weaken the hypothesis, specifically regarding the degree of ``compactness'' of $\omega_2$, in Theorem \ref{ShelahStarting}. Our main result (Theorem \ref{main}) describes a consistent scenario (relative to the existence of large cardinals) that GCH holds, $\omega_2$ is ``compact'' in a sense yet $\diamondsuit(S^2_1)$ fails. This serves as an evidence that Theorem \ref{ShelahStarting} is optimal in a sense. In particular, it is not possible to replace the (local) saturation of $\mathrm{NS}_{\omega_1}$ in the hypothesis of Theorem \ref{ShelahStarting} with the pre-saturation of $\mathrm{NS}_{\omega_1}$.

\begin{definition}
$\omega_2$ is generically supercompact via some countably closed forcing if for any $\lambda$, there exists a countably closed forcing $P$, such that whenever $G\subset P$ is generic over $V$, in $V[G]$, there exists an elementary embedding $j: V\to M$ with critical point $\omega_2$ such that $j(\omega_2)>\lambda$ and $j''\lambda\in M$.
\end{definition}

\begin{remark}
The fact that $\omega_2$ is generically supercompact via some countably closed forcing is a strong reflection principle. A game-theoretic equivalent formulation of this principle, called the \emph{Game Reflection Principle}, was studied in \cite{MR2298486}. In particular, it implies that $\mathrm{Refl}_T(S^2_0)$ holds for all stationary $T\subset \omega_1$ and $\mathrm{NS}_{\omega_1}$ is presaturated. See Section \ref{stationaryladder} for more information.
\end{remark}

\begin{theorem}\label{main}
Relative to the existence of a supercompact cardinal, it is consistent that 
\begin{enumerate}
\item GCH holds,
\item $\omega_2$ is generically supercompact via some countably closed forcing, and
\item there exists a ladder system $\bar{\eta}$ on $S^2_1$ such that $\mathrm{Unif}_2(\bar{\eta})$ holds.
\end{enumerate}
\end{theorem}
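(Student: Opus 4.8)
The plan is to start from a model $V\models\mathrm{GCH}$ in which $\kappa$ is supercompact and carries a Laver function $\ell\colon\kappa\to V_\kappa$, and to force with $\mathbb{P}=\mathrm{Coll}(\omega_1,{<}\kappa)*\dot{\mathbb{U}}$, where the first factor is the L\'evy collapse turning $\kappa$ into $\omega_2$, and $\dot{\mathbb{U}}$ is a $\mathrm{Coll}(\omega_1,{<}\kappa)$-name for a forcing which, following Shelah \cite{MR597452}, adds a witness to $\mathrm{Unif}_2(\bar\eta)$ for a ladder system $\bar\eta$ indexed by a fixed stationary, co-stationary $T\subseteq\omega_1$ (concretely: from the collapse generic read off a club ladder system $\bar\nu$ on $E^{\kappa}_{\omega_1}$ and set $\bar\eta=\bar\nu\restriction T$). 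Shelah's forcing is $\sigma$-closed, of size $\omega_2$, $\omega_2$-c.c., adds no reals, and, by his analysis, does not inflate $2^{\omega_1}$. Hence $\mathbb{P}$ is $\sigma$-closed and $\kappa$-c.c.\ of size $\kappa$, so $\omega_1$ is preserved, $\kappa=\omega_2^{V[\mathbb{P}]}$, every cardinal of $V$ in $(\omega_1,\kappa)$ is collapsed to $\omega_1$, all cardinals $\ge\kappa$ are preserved, and a nice-name count from GCH in $V$ gives $2^{\omega_1}=\omega_2$, $2^{\omega_2}=\omega_3$, and $2^\mu=\mu^+$ for $\mu\ge\omega_3$. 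This yields clauses~(1) and~(3); note also that $\mathrm{Unif}_2(\bar\eta)$ refutes $\diamondsuit(S^2_1)$, so diamond fails at the critical cofinality, which is the point of the theorem.

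It remains to verify clause~(2). Fix $\lambda$; we must produce a countably closed $P\in V[\mathbb{P}]$ such that in $V[\mathbb{P}][P]$ there is an elementary $j\colon V[\mathbb{P}]\to N$ with $\mathrm{crit}(j)=\kappa$, $j(\kappa)>\lambda$, and $j''\lambda\in N$. Choose, in $V$, an embedding $j\colon V\to M$ witnessing $\theta$-supercompactness of $\kappa$ for some $\theta\gg\lambda$ (so $j(\kappa)>\theta$, ${}^{\theta}M\subseteq M$, $j''\theta\in M$); using the Laver function one may additionally prescribe $j(\ell)(\kappa)$, which will be used to make $j(\mathbb{P})$ cohere with $\mathbb{P}$. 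Since $\mathrm{crit}(j)=\kappa$ we have $j(\mathbb{P})=\mathrm{Coll}(\omega_1,{<}j(\kappa))*j(\dot{\mathbb{U}})$ and $\mathrm{Coll}(\omega_1,{<}j(\kappa))=\mathrm{Coll}(\omega_1,{<}\kappa)\times\mathrm{Coll}(\omega_1,[\kappa,j(\kappa)))$, and, by the coherence $j(\bar\eta)\restriction E^{\kappa}_{\omega_1}=\bar\eta$, the forcing $\mathbb{U}$ sits as a (complete) suborder of $j(\mathbb{U})$. The key structural fact is the absorption property of the L\'evy collapse: a $\sigma$-closed forcing of size at most $j(\kappa)$ over $M[\mathrm{Coll}(\omega_1,{<}\kappa)]$ completely embeds into $\mathrm{Coll}(\omega_1,[\kappa,j(\kappa)))$, so $\mathbb{P}$ completely embeds into $\mathrm{Coll}(\omega_1,{<}j(\kappa))$ and hence into $j(\mathbb{P})$. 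Working in $V[\mathbb{P}]$, force with the $\sigma$-closed quotient $P=j(\mathbb{P})/\mathbb{P}$: the resulting generic $H$, together with the $\mathbb{P}$-generic $G$, is an $M$-generic for $j(\mathbb{P})$ containing $j''G$ (using that conditions of $\mathbb{P}$ from the L\'evy factor, being of hereditary size $<\kappa$, are fixed by $j$, and that the image of the $\mathbb{U}$-part of $G$ is absorbed into $H$). This lifts $j$ to $j\colon V[\mathbb{P}]\to M[H]=N$, with $\mathrm{crit}(j)=\kappa=\omega_2^{V[\mathbb{P}]}$, $j(\kappa)>\lambda$, and $j''\lambda\in M\subseteq N$, as required. (That $P$ collapses $\kappa$ is irrelevant: the definition only stipulates that the critical point be $\omega_2$.) Then the consequences recorded in the remark — $\mathrm{Refl}_T(S^2_0)$ for every stationary $T$, and presaturation of $\mathrm{NS}_{\omega_1}$ — follow, so $\omega_2$ is ``compact'' to precisely the advertised degree although $\diamondsuit(S^2_1)$ fails, which is exactly the evidence that Theorem~\ref{ShelahStarting} cannot be improved by replacing saturation with presaturation.

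The main obstacle is the lifting in clause~(2). Since $\mathbb{U}$ is only $\sigma$-closed — it cannot be $<\omega_2$-closed, the ladder system living on $E^{\kappa}_{\omega_1}$ — there is no master condition below the moved generic, and the argument that the $j(\mathbb{P})$-generic is nonetheless forceable over $V[\mathbb{P}]$ rests entirely on the L\'evy-collapse absorption: after forcing $\mathrm{Coll}(\omega_1,[\kappa,j(\kappa)))$ the poset $\mathbb{U}$ has size $\omega_1$ and is absorbed, and the image $j''G$ threads through $H$ automatically because $j$ fixes the relevant small conditions and $j(\ell)(\kappa)$ has been chosen so that $j(\mathbb{U})$ does over $M[\mathrm{Coll}(\omega_1,{<}j(\kappa))]$ exactly what $\mathbb{U}$ does over $V[\mathrm{Coll}(\omega_1,{<}\kappa)]$. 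One must check carefully that the quotient $P$ is genuinely countably closed as computed in $V[\mathbb{P}]$ (using ${}^{\theta}M\subseteq M$ with $\theta$ large relative to $|\mathbb{P}|$, so that $M$ and $V$ share the countable sequences involved) and that $\mathbb{U}$ really is a complete suborder of $j(\mathbb{U})$ (using the coherence of $\bar\eta$ with $j(\bar\eta)$ and the homogeneity of Shelah's forcing). A final routine point is that neither the L\'evy collapse nor $\mathbb{U}$, being $\sigma$-closed and $\kappa$-c.c., disturbs GCH or the reflection-type consequences of clause~(2); in particular, stationary subsets of $S^2_0$ and their reflections are preserved.
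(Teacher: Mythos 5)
There are two serious problems here, and each is fatal on its own. First, your choice of index set is wrong: you take $T\subseteq\omega_1$ stationary and co-stationary and claim both that $\mathrm{Unif}_2(\bar\nu\restriction T)$ holds and that $\omega_2$ is generically supercompact via a countably closed forcing. These are jointly inconsistent. A countably closed forcing is $T$-closed for every $T$, so generic supercompactness via countably closed forcing gives $\mathrm{Refl}_T(S^2_0)$ for every stationary $T$ (Lemma \ref{GenericSupercompactnessTpattern}); on the other hand, $\mathrm{Unif}_2(\bar\eta)$ for $\bar\eta$ indexed by a stationary $T$ implies $\mathrm{Unif}_{\aleph_1}(\bar\eta)$ and hence, via an injective coloring, a non-reflecting witness to $\neg\mathrm{Refl}_T(S^2_0)$ (this is exactly Rinot's observation in Section \ref{questions}, showing Question \ref{solved} has a negative answer). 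This is precisely why the paper takes $T=nacc(\omega_1)$, a non-stationary set, in the proof of Theorem \ref{main}, and why the stationary variant (Theorem \ref{stationaryversion}) has to weaken countable closure to $T$-closure while indexing the ladder by $T^c$.

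Second, and independently of the choice of $T$, the lifting argument is not merely sketchy; it rests on a principle that is false. L\'evy absorption tells you that a $\sigma$-closed poset of small size embeds completely into $\mathrm{Coll}(\omega_1,[\kappa,j(\kappa)))$, and hence that \emph{some} $j(\mathbb{P})$-generic exists in a countably closed extension; but to lift $j$ to domain $V[G\ast H]$ you need a $j(\mathbb{P})$-generic containing $j''(G\ast H)$ for the \emph{specific} generic $H$ of the uniformization iteration, and absorption gives no control over this. The assertion that ``$j''G$ threads through automatically'' is exactly the master-condition problem in disguise: for $p$ in the $\beta$-th coordinate of $H$, the union $\bigcup\{q(\beta):q\in H\}$ is a total function on $\kappa$ which, viewed at coordinate $j(\beta)$ of $j^*(P_{\kappa^+})$, must agree on a tail of $j^*(\bar\eta)(\kappa)$ with the $\kappa$-th coloring $d^{j(\beta)}_\kappa$ --- a constraint that depends circularly on the $j^*(P_{\kappa^+})\restriction j(\beta)$-generic and which an arbitrary $V[G]$-generic $H$ has no reason to satisfy. (Relatedly, $p\mapsto j(p)$ being a complete embedding of $P_{\kappa^+}$ into $j^*(P_{\kappa^+})$ is essentially equivalent to what must be proved, not a consequence of ``coherence.'') The paper's proof does not force $H$ at all: it \emph{constructs} $H$ inside $V[G\ast G^*]$ as a branch through a tree of conditions $\{q^\xi_t\}$ indexed by all possible initial segments of the future colorings at the $\kappa$-th coordinate, guided by a $\diamondsuit(\omega_1)$-sequence (whose availability is part of the hypotheses you omit, as is the $\diamondsuit(T^c)$ needed even for the distributivity of the iteration). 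This simultaneously secures genericity of $H$ over $V[G]$, the existence of a master condition $r\leq j^{*\prime\prime}H$, and --- a further point your proposal skips entirely --- the countable closure in $V[G\ast H]$ of the quotient $\mathrm{Coll}(\omega_1,[\kappa,<j(\kappa)))/H$ (Claim \ref{countablyclosed}), without which clause (2) would not follow. None of this machinery can be replaced by the absorption argument you propose.
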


\begin{remark}\label{UnifImpliesCH}
One may wonder if CH plays a role here. Indeed, if there exists a ladder system $\bar{\eta}$ on $S^2_1$ such that $\mathrm{Unif}_2(\bar{\eta})$ holds, then CH must hold. To see this, note:
\begin{itemize}
\item $\mathrm{Unif}_{2^\omega}(\bar{\eta})$ must hold. Given $\bar{c}=\langle c_\delta: \eta_\delta\to 2^\omega \rangle$, let $\bar{c}^n$ be a ladder coloring on $\bar{\eta}$ such that $c_\delta^n(\gamma)=c_\delta(\gamma)(n)$. Apply $\mathrm{Unif}_2(\bar{\eta})$ to get $f_n: \omega_2\to 2$ uniformizing $\bar{c}^n$ for each $n\in \omega$. It is easy to check that $f: \omega_2 \to 2^\omega$ defined as $f(\alpha)=\langle f_n(\alpha): n\in \omega\rangle$ uniformizes $\bar{c}$.
\item $\mathrm{Unif}_{\omega_2}(\bar{\eta})$ must fail. This is witnessed by the ladder coloring $\bar{c}$ such that for any $\delta\in S^2_1$, $c_\delta \equiv \delta$, namely $c_\delta$ is the constant function $\delta$ on $\eta_\delta$.
\end{itemize}

However, there are still related and meaningful questions in the absence of CH. See Section \ref{questions} for more information.
\end{remark}

The paper is organized as follows: 
\begin{itemize}
\item Section \ref{wellchosen} describes a ladder system on $S^2_1$ and the uniformization forcing with respect to that ladder system; 
\item Section \ref{non-stationary} gives the details of the proof of Theorem \ref{main};
\item Section \ref{variations} presents two variations of the model in Section \ref{non-stationary}. In the first variation, the reflection principle is weakened and the uniformization property is strengthened. In the second variation, we deal with the \emph{monochromatic uniformization property} on a club ladder system;
\item Section \ref{cheaper} describes a model obtained from the existence of a weakly compact cardinal;
\item Section \ref{questions} concludes with some questions and remarks.
\end{itemize}

\section{Nice ladder systems and the uniformization forcing}\label{wellchosen}
We assume the ground model satisfies GCH and $\kappa$ is a Mahlo cardinal. Recall that for any subset of ordinals $A$, $acc(A)=\{\alpha\in A: \sup A\cap \alpha=\alpha\}$ and $nacc(A)=\{\alpha\in A: \sup A\cap \alpha <\alpha\}$.

\subsection{Nice ladder systems}\label{clubladder}
In this subsection, we will define in $V^{\mathrm{Coll}(\omega_1, <\kappa)}$ a club ladder system $\bar{\nu}=\langle \nu_\delta: \delta\in \kappa\cap \mathrm{cof}(\omega_1)\rangle$. The ladder system on which we will force the uniformization property will be indexed by some $T\subset \omega_1$ with respect to $\bar{\nu}$.

Work in $V$ and define a function $f$ on the strongly inaccessible cardinals below $\kappa$ such that whenever such $\gamma$ is given, $f(\gamma)$ is a $(\mathrm{Coll}(\omega_1,  < \gamma) \times \mathrm{Coll}(\omega_1, \gamma^{+++}))$-name satisfying the following: whenever $g*h\subset \mathrm{Coll}(\omega_1,  < \gamma) \times \mathrm{Coll}(\omega_1, \gamma^{+++})$ is generic over $V$, in $V[g*h]$, fix a bijection $F=\omega_1\to (\gamma^{+})^V$ and also a surjection $l =\bigcup h : \omega_1 \to (\gamma^{+++})^V$. Define a $\subset$-increasing continuous sequence $\langle N_\xi: \xi<\omega_1\rangle$ such that 
\begin{enumerate}
\item $(H(\gamma^{+++}))^{V[g]}\in N_0$,
\item $N_{\xi}\in V[g]$ and $N_\xi \prec (H(\gamma^{+4}), \in , \prec^*)^{V[g]}$ where $\prec^*$ is a well-ordering of $(H(\gamma^{+4}))^{V[g]}$,
\item $N_\xi\cap \gamma\in \gamma$ and $N_\xi$ is of size $\aleph_1$,
\item $N_{\xi+1}$ is countably closed, 
\item $F\restriction \xi+1, l\restriction \xi+1,\langle N_\zeta: \zeta\leq \xi\rangle \in N_{\xi+1}$.
\end{enumerate}

For each $\xi<\omega_1$, let $\delta_\xi= N_\xi\cap \gamma$ and let $\nu_\gamma = \{\delta_\xi: \xi<\omega_1\}$. 
Back in $V$, let $f(\gamma)=\dot{\nu}_\gamma$ where $\dot{\nu}_\gamma$ is a $(\mathrm{Coll}(\omega_1,  < \gamma) \times \mathrm{Coll}(\omega_1, \gamma^{+++}))$-name for the object $\nu_\gamma$ defined above.

Now if $G\subset \mathrm{Coll}(\omega_1, <\kappa)$ is generic over $V$, then let $S\subset \kappa$ consist of ordinals that are strongly inaccessible cardinals in $V$. Since $\mathrm{Coll}(\omega_1, <\kappa)$ is $\kappa$-c.c, both $S$ and $S^c\cap \mathrm{cof}^V(>\omega)$ remain stationary.
It is also well-known that $\diamondsuit(S^c\cap \mathrm{cof}(\omega_1))$ holds in $V[G]$. Fix a $\diamondsuit(S^c\cap \mathrm{cof}(\omega_1))$-sequence $\langle s_\alpha: \alpha\in S^c\cap \mathrm{cof}(\omega_1) \rangle$. Define $\langle \nu_\gamma : \gamma\in \kappa\cap \mathrm{cof}(\omega_1)\rangle$ such that if $\gamma\in S$, $\nu_\gamma= (\dot{\nu}_\gamma)^{G\restriction \gamma \times G(\gamma^{+++})}$ and if $\gamma\in S^c\cap \mathrm{cof}(\omega_1)$, then if $s_\gamma$ is a club subset of $\gamma$, then $\nu_\gamma\subset s_\gamma$ is a club subset of order type $\omega_1$ with $nacc(\nu_\gamma)\subset nacc(s_\gamma)$. Otherwise, $\nu_\gamma$ is any club subset of $\gamma$ of order type $\omega_1$.

%
%
%

\begin{remark}
The ladder system $\bar{\nu}$ defined above is \emph{nice}, in the sense that it satisfies the following strong club guessing property: for any club $D\subset \omega_2$, there exists stationarily many $\gamma\in S^2_1$ such that $\nu_\gamma\subset D\cap \gamma$ and $nacc(\nu_\gamma)\subset nacc(D\cap \gamma)$. This is guaranteed already by $\diamondsuit(S^c\cap \mathrm{cof}(\omega_1))$. The part of the ladder defined on $S$ will be useful in the consistency proof in Section \ref{non-stationary}.
\end{remark}

\subsection{Forcing the uniformization property}\label{UnifForcing}
Work in $V[G]$, where $G\subset \mathrm{Coll}(\omega_1, <\kappa)$ is generic over $V$. Let $T\subset \omega_1$ be an uncountable co-stationary set. Our goal is to force the 2-uniformization property on $\bar{\eta}=\bar{\nu}\restriction T$.

Suppose we are given a ladder coloring $\bar{c}=\langle c_\delta\in ^{\eta_\delta}2: \delta\in \omega_2\cap\mathrm{cof}(\omega_1)\rangle$. The \emph{2-uniformization forcing} $P_{\bar{c}}$ consists of $f: \alpha\to 2$ for $\alpha<\omega_2$ such that for any $\delta\leq \alpha$ and $\delta\in \mathrm{cof}(\omega_1)$, $f\restriction \eta_\delta=^* c_\delta$. The order relation is extension. Note that $P_{\bar{c}}$ satisfies that any countable decreasing sequence has a greatest lower bound.

The final forcing is a $<\kappa$-support iteration of 2-uniformization forcings of length $\kappa^+$, namely $\langle P_\gamma, \dot{Q}_\beta: \gamma\leq \kappa^+, \beta<\kappa^+\rangle$ such that $\Vdash_{P_\gamma} \dot{Q}_\gamma = P_{\dot{\bar{c}}}$ for some ladder coloring $\dot{\bar{c}}$ on $\bar{\eta}$.

\begin{definition}
Let $\alpha\leq \kappa^+$ be given. We say $r\in P_\alpha$ is \emph{flat} if there exists $ \beta<\kappa$ such that for any $\gamma\in support(r)$, there is $ f\in V\cap 2^{\beta}$ with $r\restriction \gamma\Vdash_{P_\alpha} r(\alpha)=\check{f}$.
\end{definition}

\begin{notation}
Let $\alpha'\leq \alpha\leq \kappa^+$ be given. Suppose $p\in P_\alpha$ and $q\in P_{\alpha'}$ are such that $q\leq_{P_{\alpha'}} p\restriction \alpha'$, then $q\wedge p$ denote the condition $t\in P_\alpha$ where $$t(\gamma) =\begin{cases}
q(\gamma), & \gamma <\alpha'\\
p(\gamma), & \text{otherwise}
\end{cases}.
$$
\end{notation}

The following facts due to Shelah will be crucial to conclude that the iteration forces $\mathrm{Unif}_2(\bar{\eta})$:

\begin{theorem}[Shelah, \cite{MR597452}]\label{Shelah}
Suppose $T\subset \omega_1$ is an uncountable co-stationary set, $\diamondsuit(T^c)$ holds, and suppose $\bar{\eta}=\bar{\nu}\restriction T$, where $\bar{\nu}$ is a nice club ladder system on $S^2_1$. Let $\kappa=\omega_2$ and $\langle P_\gamma, \dot{Q}_\beta: \gamma\leq \kappa^+, \beta<\kappa^+\rangle$ be a $<\kappa$-support iteration of 2-uniformization forcings with respect to $\bar{\eta}$.
Then the following hold: 
\begin{enumerate}
\item $\{r\in P_{\kappa^+}: r \text{ is flat}\}$ is a dense subset of $P_{\kappa^+}$,
\item $P_{\kappa^+}$ is $\omega_1$-distributive.
\end{enumerate}
\end{theorem}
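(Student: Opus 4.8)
The plan is to reconstruct Shelah's argument, proving the two clauses simultaneously by induction on $\alpha\le\kappa^+$, where the inductive hypothesis at $\alpha$ is: flat conditions are dense in $P_\alpha$, and $P_\alpha$ is $\sigma$-closed (every decreasing $\omega$-chain has a greatest lower bound). The second clause already gives that $P_\alpha$ is $\omega_1$-distributive, hence preserves $\omega_1$ and cannot turn a cofinality-$\omega_1$ ordinal into a cofinality-$\omega$ one; so in $V^{P_\alpha}$ the sequence $\bar\eta$ is still a ladder system on $S^2_1$, and the next iterand $\dot Q_\alpha=P_{\dot{\bar c}}$ is forced to be $\sigma$-closed (its conditions are functions $f:\zeta\to 2$ with $\zeta<\omega_2$, and the union of an $\omega$-chain of them has domain of cofinality $\omega$, not in $S^2_1$, so imposes no uniformization constraint at the top). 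At stage $\alpha$ I would first establish $\sigma$-closure, then density of flats.

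The $\sigma$-closure clause is the soft half. Given a decreasing $\langle r_n:n<\omega\rangle$ in $P_\alpha$, the union $s$ of the supports has size $\le\aleph_1<\kappa$ and is a legitimate support; define $r$ with support $s$ by recursion on coordinates, taking $r\restriction\gamma$ to be the greatest lower bound of $\langle r_n\restriction\gamma\rangle$ (which exists by the inductive hypothesis at $\gamma$) and $r(\gamma)$ the name, below $r\restriction\gamma$, for the union of the interpretations of the $r_n(\gamma)$. That this union is forced to be a legitimate condition of $\dot Q_\gamma$ is seen as above: in $V^{P_\gamma}$ the domains of those interpretations form a non-decreasing $\omega$-sequence of ordinals, which belongs to $V$ by the $\omega_1$-distributivity of $P_\gamma$, so its supremum has cofinality $\omega$ (in $V$) and is not in $S^2_1$, while the constraints strictly below it are inherited from the $r_n(\gamma)$. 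So $r$ is the required greatest lower bound. This half uses neither $\diamondsuit(T^c)$ nor the niceness of $\bar\nu$; applied to $P_{\kappa^+}$ it gives clause $(2)$, since $\sigma$-closure implies $\omega_1$-distributivity.

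The density of flat conditions is the real content, proved by a fusion of length $\omega_1$, run for a given $r\in P_\alpha$ to be extended (when $\mathrm{cf}(\alpha)\ge\kappa$ supports are bounded and one reduces to a smaller stage). Build a continuous $\in$-chain $\langle M_\xi:\xi<\omega_1\rangle$ of elementary submodels of $(H(\chi),\in,\prec^*)$, each of size $\aleph_1$, with $\omega_1+1\subseteq M_0$, containing all relevant parameters (the iteration, $\bar\nu$, $T$, the coloring names, a $\diamondsuit(T^c)$-sequence, $r$), with $\langle M_\zeta:\zeta\le\xi\rangle\in M_{\xi+1}$, each $M_{\xi+1}$ closed under $\omega$-sequences (possible since CH holds), $M_\xi\cap\omega_2\in\omega_2$, and $\mathrm{support}(r)\subseteq\bigcup_\xi M_\xi$; set $\mu=\sup_\xi(M_\xi\cap\omega_2)\in S^2_1$. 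Using the strong club-guessing built into $\bar\nu$ (its niceness) and a pressing-down argument, arrange in addition that $\nu_\mu$ is guessed by the continuous sequence $\langle M_\xi\cap\omega_2:\xi<\omega_1\rangle$, so that, modulo a club of $\omega_1$, the elements of $\eta_\mu=\nu_\mu[T]$ are exactly the values $M_\xi\cap\omega_2$ with $\xi\in T$. Now run a decreasing $\omega_1$-fusion of flat conditions $\langle r_\xi:\xi<\omega_1\rangle$ with $r_0\le r$, $r_\xi\in M_{\xi+1}\cap P_\alpha$, adjoining one new coordinate and discharging one bookkept genericity task over $M_\xi$ at each stage (so the partial support stays countable), with strictly increasing flatness bounds $\beta_\xi$ running cofinally through $\nu_\mu$, and --- crucially --- at the stages $\xi\in T$, using the diamond to predict, and then a countable strengthening (permitted by $\sigma$-closure of $P_\alpha$) to decide, the relevant bit of the coloring component $\dot c_\mu$ at each coordinate then in the support, defining the coordinate function to take that value at $\beta_\xi$. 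Let $r'$ be the condition with support $\bigcup_\xi\mathrm{support}(r_\xi)$, uniform bound $\mu$, and each coordinate forced equal to the ground-model function $f_\gamma=\bigcup_\xi f^\xi_\gamma\colon\mu\to 2$; the content of the construction is that $r'$ is a legitimate condition of $P_\alpha$, i.e.\ each $f_\gamma$ is forced legitimate in $\dot Q_\gamma$: constraints at $\delta<\mu$ are inherited from initial segments (the intermediate cofinality-$\omega_1$ ordinals being absorbed during the fusion using the co-stationarity of $T$, which leaves a bounded initial piece of each $\eta_\delta$ free, together with the elementarity of the $M_\xi$), and the constraint at $\delta=\mu$ holds since $f_\gamma\restriction\eta_\mu=^*\dot c_\mu$ by construction. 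Thus $r'$ is flat and $r'\le r$.

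The step I expect to be the main obstacle is exactly the legitimacy of the amalgamated coordinate functions at the new ordinals of cofinality $\omega_1$ crossed by the fusion, above all at the terminal height $\mu$: one cannot simply decide the coloring component $\dot c_\mu$, a function of length $\omega_1$, with a single condition of the $\sigma$-closed but not $\omega_2$-distributive forcing $P_\alpha$. The two hypotheses are calibrated for this. The niceness of $\bar\nu$ --- where the $\diamondsuit$-driven part of the construction of $\bar\nu$ in Section~\ref{clubladder} is used --- lets $\mu$ be chosen so that $\nu_\mu$ club-guesses the cofinal sequence produced by the fusion, so the points of $\eta_\mu$ arise at controlled stages; and $\diamondsuit(T^c)$ lets one predict, stage by stage, the few bits of $\dot c_\mu$ that must be matched there, $T^c$ being exactly the set of indices of $\nu_\mu$ not appearing in $\eta_\mu$, which supplies the slack making the prediction arrangeable. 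Balancing this prediction against the intermediate uniformization constraints, against keeping each $r_\xi$ flat with a strictly larger bound, and against the genericity bookkeeping, is the delicate core of the proof; the attendant cardinal arithmetic (only $\kappa$ flat conditions in each $P_\beta$ with $\beta\le\kappa$, and $\kappa^+$ in all) is routine under GCH.
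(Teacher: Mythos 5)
There is a genuine gap, and it occurs at both of the places where the real work of the theorem lives. First, clause (2) is not the statement that $P_{\kappa^+}$ adds no new $\omega$-sequences: the paper's proof of (2) decides names $\dot f:\omega_1\to\mathrm{Ord}$, i.e.\ it shows no new $\omega_1$-sequences of ordinals are added (this is also what is needed for the $\kappa^+$-c.c.\ and cardinal preservation downstream). Your assertion that ``$\sigma$-closure implies $\omega_1$-distributivity'' in this sense is false -- $\mathrm{Add}(\omega_1,1)$ is countably closed -- and there is a structural reason it cannot be repaired cheaply here: your soft greatest-lower-bound argument works verbatim when $T=\omega_1$, i.e.\ for a club ladder system, whereas $\mathrm{Unif}_2(\bar\eta)$ provably fails for club ladder systems (Shelah, cited in the paper). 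So any correct proof of (2) must use $\diamondsuit(T^c)$ and the co-stationarity of $T$; yours does not, and clause (2) is simply not established.

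Second, the mechanism you propose for handling the coloring constraint at the terminal ordinal $\mu$ -- ``using the diamond to predict, and then a countable strengthening to decide, the relevant bit of $\dot c_\mu$'' at each stage $\xi\in T$ -- is a linear construction, and it is exactly the construction the paper explains cannot work. Deciding a bit of the $P_\gamma$-name $\dot c^\gamma_\mu$ requires extending the condition below $\gamma$; committing to $\omega_1$-many such decisions produces an $\omega_1$-fusion on the earlier coordinates whose limit must itself obey the (still undecided) colorings at $\mu$ -- the argument is circular, and if it worked the diamond would be superfluous. What the paper does instead is build a \emph{tree} of pairwise coherent conditions $\{q^\xi_t\}$ indexed by all possible initial segments $t$ of the colorings at $\mu$ (so that $q^\xi_t$ and $q^\xi_{t'}$ agree below the coordinate where $t,t'$ split), never committing to a value of $\dot c_\mu$ during the construction; the $\diamondsuit(T^c)$-sequence guesses nodes at stages $\xi\in T^c$ -- not $T$, these are precisely the stages where $\delta_\xi\notin\eta_\mu$ and hence no new obedience obligation arises -- and the genericity/deciding work is performed only at those guessed nodes along the guessed branch. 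Only at the very end does one take the union of the tree's first coordinates (legitimate because it obeys the ground-model colorings), extend once to decide $\dot c_\mu$ using the already-established distributivity of the earlier stages, and read off the true branch, which by the guessing property passes through stationarily many worked-on nodes. This tree-plus-guessing device is the essential idea of the proof and is absent from your proposal; the surrounding scaffolding (the $\sigma$-closure observation, the chain of models with $\nu_\mu$ equal to its trace via niceness, the identification of legitimacy at $\mu$ as the crux) is correct.
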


In particular, under the hypothesis of Theorem \ref{Shelah}, $P_{\kappa^+}$ preserves all cardinals. Furthermore, the fact that $P_{\kappa^+}$ is $\kappa^+$-c.c, which is a consequence of \emph{(2)}, makes it possible (with the standard book-keeping methods) that all ladder colorings which appear in the final model are considered in some intermediate model. Therefore, $P_{\kappa^+}$ forces $\mathrm{Unif}_2(\bar{\eta})$.

To give the reader a sense of how Theorem \ref{Shelah} is proved, we include here a proof sketch for a special case, i.e. the 2-step iteration. The proof presented here is not the simplest one can find for this special case, but it contains the key ideas to prove the general case, which will also be used in the proof of Theorem \ref{main}.

\begin{proof}[Proof sketch of Theorem \ref{Shelah}]
Fix a nice club ladder system $\bar{\nu}$, an uncountable co-stationary subset $T\subset \omega_1$ and $\bar{\eta}=\bar{\nu}\restriction T$.
Let $\bar{c}^0$ be a ladder coloring on $\bar{\eta}$ and $\dot{\bar{c}}^1$ be a $P_{\bar{c}^0}$-name for a ladder coloring on $\bar{\eta}$. The forcing we are dealing with is $P_{\bar{c}^0} * P_{\dot{\bar{c}}^1}$. First, we show that $P_{\bar{c}^0}$ is $\omega_1$-distributive. Let $\dot{f}: \omega_1 \to \mathrm{Ord}$ be a $P_{\bar{c}^0}$-name and $p\in P_{\bar{c}^0}$. Find an $\subseteq$-increasing continuous sequence $\langle N_\xi: \xi<\omega_1\rangle$ such that for some sufficiently large regular $\theta$,
\begin{itemize}
\item $N_\xi \prec (H(\theta), \in , \triangleleft)$ where $\triangleleft$ is a well ordering of $H(\theta)$,
\item $\langle N_{\zeta}: \zeta\leq \xi\rangle\in N_{\xi+1}$ and
\item $|N_\xi|=\aleph_1$ and $N_{\xi+1}$ is countably closed.
\end{itemize}
Let $\delta_\xi= N_\xi\cap \omega_2$. Let $\delta=\sup_{\xi<\omega_1} \delta_\xi$. We then recursively define $\langle p_\xi: \xi<\omega_1\rangle$ such that 
\begin{itemize}
\item $p_0=p$, 
\item $p_{\xi+1}$ decides $\dot{f}\restriction \xi$,
\item $p_{\xi+1}\in N_{\xi+1}$ and 
\item $p_{\xi}$ \emph{obeys} $c_\delta^0$, in the sense that $p_{\xi} \supset c_\delta^0 \restriction \delta_{\xi}$.
\end{itemize}
At limit stages, we just take the inverse limit. Suppose we have defined $p_\xi$ where $\xi<\omega_1$. By the hypothesis, we know that $\mathrm{dom}(p_\xi)\leq \delta_{\xi}$. Therefore, $c_{\delta}^0\restriction [\delta_\xi, \delta_{\xi+1}) \cap \eta_\delta\in N_{\xi+1}$ by the fact that $N_{\xi+1}$  is countably closed and $\eta_\delta$ has order type $\omega_1$. In $N_{\xi+1}$, we can first extend $p_\xi$ to obey $c_\delta^0\restriction [\delta_\xi, \delta_{\xi+1})$, and then extend again to decide $\dot{f}\restriction \xi$. 

Let $p_\infty=\bigcup_{\xi<\omega_1} p_\xi$. It is immediate from the construction that $p_\infty\in P_{\bar{c}^0}$ which forces $\dot{f}$ to be in $V$.

Notice that the construction above works even when $\bar{\eta}$ is a club ladder system on $S^2_1$. Recall a theorem of Shelah saying $\mathrm{Unif}_2(\bar{\eta})$ must fail for any club ladder system $\bar{\eta}$. We must make use of the fact that $\bar{\eta}=\bar{\nu}\restriction T$ and $\diamondsuit(T^c)$ to deal with iterations.

To deal with $P_{\bar{c}^0} * P_{\dot{\bar{c}}^1}$, it is no longer the case that we know what $\dot{\bar{c}}^1$ is in the ground model, since it very much depends on the generic for $P_{\bar{c}^0}$. The \emph{key point} now is instead of building a linear sequence (namely, $\langle p_\xi: \xi<\omega_1\rangle$) as in the previous case, we need to build a tree anticipating all possibilities of $\dot{\bar{c}}^1$. During the process, $\diamondsuit(T^c)$ is used to guess the ``correct'' branch.

Let $\dot{f}: \omega_1\to \mathrm{Ord}$ be a given $P_{\bar{c}^0}*P_{\dot{\bar{c}}^1}$-name and $(p,\dot{q})\in P_{\bar{c}^0}*P_{\dot{\bar{c}}^1}$. We will show there exists $(p', \dot{q}')\leq (p,\dot{q})$ forcing $\dot{f}=\check{g}$ for some $g$. Note that it is immediate from the $\omega_1$-distributivity of $P_{\bar{c}^0}$ that the collection of flat conditions in $P_{\bar{c}^0}*P_{\dot{\bar{c}}^1}$ is dense. 

Making use of the niceness of $\bar{\nu}$, we are able to find $\langle N_\xi: \xi<\omega_1\rangle$ such that for some sufficiently large regular $\theta$, 
\begin{itemize}
\item $\dot{f}, (p,\dot{q})\in N_0$,
\item $N_\xi\prec (H(\theta), \in , \triangleleft)$ where $\triangleleft$ is a well ordering of $H(\theta)$, 
\item $\langle N_\zeta: \zeta\leq \xi\rangle\in N_{\xi+1}$,
\item $|N_\xi|=\aleph_1$ and $N_{\xi+1}$ is countably closed and 
\item letting $\delta_\xi=N_\xi\cap \omega_2$ for each $\xi<\omega_1$ and $\delta=\sup_{\xi<\omega_1} \delta_\xi$, we have $\nu_\delta=\{\delta_\xi: \xi<\omega_1\}$.
\end{itemize}
As a result, $\eta_\delta= \{\nu_\delta(\xi): \xi\in T\}=\{\delta_\xi: \xi\in T\}$. For each $\xi<\omega_1$, let $T_\xi$ consist of all possible $t: \eta_\delta\cap \delta_\xi \to 2$ and notice that $T_\xi\in N_{\xi+1}$ by the countable closure of $N_{\xi+1}$. By copying a $\diamondsuit(T^c)$-sequence to the club $\nu_\delta$, we may assume there exists $\langle t_\xi\in T_\xi: \xi\in T^c\rangle$ such that for any $s: \eta_\delta \to 2$, there exists stationarily many $\xi\in T^c$ such that $s\restriction \eta_\delta\cap \delta_\xi = t_\xi$.

Define $\{q^\xi_t: \xi\in \omega_1, t\in T_\xi\}$ such that:
\begin{itemize}
\item $q^\xi_t\leq (p,\dot{q})$,
\item $\langle q^\xi_t: t\in T_\xi\rangle \in N_{\xi+1}$,
\item $\mathrm{dom}(q^\xi_t(0))\leq \delta_\xi$ and $\Vdash_{P_{\bar{c}^0}} \mathrm{dom}(q^\xi_t(1))\leq \delta_\xi$,
\item $q^\xi_t(0)$ obeys $c^0_\delta$,
\item $q^\xi_t$ obeys $t$, namely $\Vdash_{P_{\bar{c}^0}} q^\xi_t(1)\supset t$,
\item for $t, t'\in T_\xi$, $q^\xi_t(0)=q^\xi_{t'}(0)$,
\item for any $\xi<\xi'$, $t\in T_\xi$ and $t'\in T_{\xi'}$, if $t\sqsubset t'$, then $q^{\xi'}_{t'}\leq q^\xi_t$, 
\item if $\xi\in T^c$, $q^{\xi+1}_{t_\xi}$ decides $\dot{f}\restriction \xi$ (note that if $\xi\in T^c$, $T_\xi=T_{\xi+1}$ so $t_\xi\in T_{\xi+1}$).
\end{itemize}
Suppose for a moment that this construction is successful. We let $p^*=\bigcup_{t\in T_\xi} q^{\xi}_t(0)$. Define a $P_{\bar{c}^0}$-name $\dot{q}^*$ such that $p^*\Vdash_{P_{\bar{c}^0}} \dot{q}^*=\bigcup_{t\in T_\xi, t\subset c^1_\delta} q^{\xi}_t(1)$. Extend $p^*$ further to $p'$ such that $p'$ decides $\dot{c}^1_\delta$ to be $d$ and $\dot{q}^*$ to be $q'$, as $P_{\bar{c}^0}$ is $\omega_1$-distributive. As $\langle t_\xi: \xi<\omega_1\rangle$ satisfies the guessing property, we know there are stationarily many $\xi\in T^c$ such that $t_\xi=d\restriction \eta_\delta\cap \delta_\xi$. Hence, for any such $\xi$, $(p',q')\leq q^{\xi+1}_{t_\xi}$. Therefore, $(p',q')$ decides $\dot{f}$.

Finally, we demonstrate how to construct such $\{q^\xi_t: t\in  T_\xi\}$ by recursion on $\xi<\omega_1$. If $\xi\in acc(\omega_1)$, then $q^{\xi}_t$ is defined to be the greatest lower bound of $\langle q^{\zeta}_{t\restriction \delta_\zeta}: \zeta<\xi\rangle$. If $\xi \in T$, then $q^{\xi+1}_t$ is defined to be the canonical extension of $q^\xi_{t\restriction \delta_\xi}$ obeying $t$. More precisely, we let $a$ be the $\triangleleft$-least extension of $q^\xi_{t\restriction \delta_\xi}(0)$ obeying $c^0_\delta$. The reason why this is possible is because by the induction hypothesis, $\mathrm{dom}(q^\xi_{t\restriction \delta_\xi}(0))\leq \delta_\xi$ and $q^\xi_{t\restriction \delta_\xi}(0)$ obeys $c^0_\delta\restriction \delta_\xi$. Similarly, we can find the $\triangleleft$-least $P_{\bar{c}^0}$-name $\dot{b}$ such that $\Vdash_{P_{\bar{c}^0}} \dot{b}\supset q^{\xi}_{t\restriction \delta_\xi}(1), t \restriction  \delta_{\xi+1}$. Define $q^{\xi+1}_t$ to be $(a, \dot{b})$. Notice that $q^{\xi+1}_t\in N_{\xi+1}$.

 If $\xi\in T^c$, which implies $T_{\xi+1}=T_\xi$, then we extend $q^{\xi}_{t_\xi}$ to $q'=_{def} q^{\xi+1}_{t_\xi}$ in $N_{\xi+1}$ such that $q'$ decides $\dot{f}\restriction \xi$. For each $t'\in T_{\xi+1}-\{t_\xi\}$, define $q^{\xi+1}_{t'}=(q'(0), q^\xi_{t'}(1))$. Note that $q^{\xi+1}_{t'}\leq q^\xi_{t'}$ since by the induction hypothesis, $q^{\xi}_{t'}(0)=q^{\xi}_{t_\xi}(0)$.

It is now clear that $\{q^\xi_t: \xi\in \omega_1, t\in T_\xi \}$ constructed above satisfies the requirements.

Let us say a few words about the limit case. Instead of guessing a single ladder coloring as in the two-step iteration, we need to guess many ladder colorings. Since the size of the support is at most $\aleph_1$, we can still use the $\diamondsuit(T^c)$-sequence to guess all these colorings simultaneously. Then we more or less proceed as in the two-step iteration, dealing with coordinates in the support simultaneously. More details will be spelled out in the next section. \end{proof}

\section{A proof of Theorem \ref{main}}\label{non-stationary}
We start with the ground model $V$ satisfying GCH and $\diamondsuit(\omega_1)$.
Let $\kappa$ be a supercompact cardinal. Let $\mathbb{P}=\mathrm{Coll}(\omega_1, <\kappa)$ and in $V^\mathbb{P}$, we find a nice club ladder system $\bar{\nu}$ as in Section \ref{wellchosen}. Let $T=nacc(\omega_1)$ and $\bar{\eta}=\bar{\nu}\restriction T$. Then in $V^\mathbb{P}$, we can define $\langle P_\gamma, \dot{Q}_\beta: \gamma\leq \kappa^+, \beta<\kappa^+\rangle$  to force $\mathrm{Unif}_2(\bar{\eta})$.

Let $Q=\mathbb{P} * P_{\kappa^+}$. We show that the following hold in $V^Q$: 
\begin{enumerate}
\item GCH,
\item $\mathrm{Unif}_2(\bar{\eta})$,
\item $\omega_2$ is generically supercompact via some countably closed forcing.
\end{enumerate}

The last item is the only thing to verify. Given $(p,\dot{q})\in \mathbb{P}*P_{\kappa^+}$ and $\theta > 2^{2^{\kappa^+}}$, we will find a generic filter $G*H$ containing $(p,\dot{q})$ such that in a further countably closed forcing extension, there exists an elementary embedding $j: V[G*H]\to M$ with critical point $\kappa=(\omega_2)^{V[G*H]}$, $j(\kappa)>\theta$ and $j''\theta\in M$. The proof will build on the ideas from \cite{MR597452} and \cite{MR683153}.

Fix some $j: V\to M$ witnessing that $\kappa$ is $\theta$-supercompact in $V$. Let $G\subset \mathbb{P}$ be generic over $V$ containing $p$. 
Let $G^*\subset \mathrm{Coll}(\omega_1, [\kappa, <j(\kappa)))$ be generic over $V[G]$. Then we can lift $j$ to $j^*: V[G]\to M[G*G^*]$. 
Our goal is to build an $H\subset P_{\kappa^+}$ in $V[G*G^*]$ containing $q=_{\mathrm{def}}(\dot{q})^G$ which is generic over $V[G]$, and in $V[G*H]$, $\mathrm{Coll}(\omega_1, [\kappa,<j(\kappa)))/H$ is countably closed. Furthermore, we need to make sure there exists a \emph{master condition} $l\in j^*(P_{\kappa^+})$ extending ${j^*}''H$. Suppose this can be done, then we can finish the proof as follows: force below $l$ to get a generic $h\subset j^*(P_{\kappa^+})$ over $V[G*G^*]$ (notice that $j^*(P_{\kappa^+})$ is countably closed in $V[G*G^*]$). Then we can further lift $j^*$ to $j^+: V[G*H]\to M[G*G^**h]$. Hence, by going to a countably closed forcing extension over $V[G*H]$, we can find an elementary embedding $j^+$ with domain $V[G*H]$ into some transitive class $N$ with critical point $\kappa = (\omega_2)^{V[G*H]}$, $j^+(\kappa)>\theta$ and ${j^+}'' \theta\in N$. We are done. 

For the rest of the proof, we will demonstrate how to find such $H$ in $V[G*G^*]$ satisfying the aforementioned requirements.

By the definition of $\bar{\nu}$, we know that $j^*(\bar{\nu})(\kappa)$ is defined as follows: there exists a surjection $h=\bigcup G^*(\kappa^{+++}): \omega_1\to (\kappa^{+++})^{V[G]}$ and a bijection $F: \omega_1\leftrightarrow (\kappa^+)^{V[G]}$. In addition, there exists a $\subset$-increasing continuous sequence $\bar{N}=\langle N_\xi: \xi<\omega_1\rangle$ such that 
\begin{enumerate}
\item $(H(\kappa^{+++}))^{V[G]}\in N_0$,
\item $N_{\xi}\in V[G]$ and $N_\xi \prec (H(\kappa^{+4}), \in , \prec^*)^{V[G]}$ where $\prec^*$ is a well-ordering of $(H(\kappa^{+4}))^{V[G]}$,
\item $N_\xi\cap \kappa\in \kappa$ and $N_\xi$ is of size $\aleph_1$,
\item $N_{\xi+1}$ is countably closed, 
\item $h\restriction \xi+1, F\restriction \xi+1,\langle N_\zeta: \zeta\leq \xi\rangle \in N_{\xi+1}$.
\end{enumerate}

Let $\delta_\xi=_{def} N_\xi\cap \kappa$ for each $\xi<\omega_1$. We have $j^*(\bar{\nu})(\kappa)=\{\delta_\xi: \xi<\omega_1\}$, in particular, $\sup_{\xi<\omega_1} \delta_\xi = \kappa$.

\subsection{Constructing the tree of generics}\label{tree}

Let $d: (\kappa^{++})^{V[G]} \leftrightarrow P_{\kappa^+}\cup \{D\subset P_{\kappa^+}: D \text{ is dense open}\}$ be a bijection living in $(H(\kappa^{+++}))^{V[G]}$. By the definition of $\bar{N}$, we know a tail of the models must contain $d$ and $P_{\kappa^+} $. Let $\Upsilon=\{\delta_{\xi+1}: \xi<\omega_1\}$ and note that $\Upsilon=^* j^*(\bar{\eta})(\kappa)$. To simplify the presentation, we may assume $N_0$ contains $d$ and $P_{\kappa^+}$ hence $\Upsilon= j^*(\bar{\eta})(\kappa)$. 

Let $F_1: \omega_1\to P_{\kappa^+} \cup \{D\subset P_{\kappa^+}: D \text{ is dense open}\}$ be defined such that $F_1(\xi)=d(\zeta)$ whenever $h(\xi)=\zeta$ for $\zeta< (\kappa^{++})^{V[G]}$ and $1_{P_{\kappa^+}}$ otherwise.

Define $T_\xi$ for $\xi<\omega_1$ as follows: 
$$T_\xi=\{t: dom(t)=F''\xi, \forall \zeta<\xi, t(F(\zeta)): \Upsilon\cap \delta_\xi\to 2\}.$$

Notice that $T_\xi\in N_{\xi+1}$ hence $T_\xi\subset N_{\xi+1}$. Intuitively speaking, $T_\xi$ collects all possible countable initial segments of the ladder colorings at the ``$\kappa$-th coordinate'' in $j^*(P_{\kappa^+})$. 

For each $t\in T_\xi$, we note that there are two ways of taking restrictions. One way is the usual function restriction denoted as $t\restriction \beta$ for $\beta \in  (\kappa^+)^{V[G]}$. The other way is the restriction to the previously defined tree. More precisely, given $\xi'<\xi<\omega_1$, $t\downarrow \xi'$ is a function in $T_{\xi'}$ such that for each $\zeta<\xi'$, $(t\downarrow \xi')(F(\zeta))=t(F(\zeta))\restriction \delta_{\xi'}\cap \Upsilon$.

It is a fact that any $\diamondsuit(\omega_1)$-sequence remains a $\diamondsuit(\omega_1)$-sequence in any countably closed forcing extension. By copying a $\diamondsuit(\omega_1)$-sequence in $V[G]$ to the club $\{\delta_\xi: \xi<\omega_1\}$, we may assume there exists a sequence in $V[G*G^*]$, $\langle t_\xi: \xi<\omega_1\rangle$ with $t_\xi\in T_\xi$, such that in $V[G*G^*]$, for any function $t$ with domain $(\kappa^+)^V=F''\omega_1$, satisfying that for any $\xi<\omega_1$, $t(F(\xi)): \Upsilon\cap \kappa\to 2$, there exist stationarily many $\xi<\omega_1$ such that $t\downarrow \xi = t_\xi$. 

We recursively define $Q^\xi=\{q^\xi_t: t\in T_\xi\}$ satisfying the following: 
\begin{enumerate}
\item $Q^\xi\in N_{\xi+1}$,
\item each $q^\xi_t$ is of \emph{height} $\leq \delta_\xi$, namely, for each $\beta<(\kappa^+)^{V[G]}$, $\Vdash_{P_\beta} dom(q^\xi_t(\beta))\leq \delta_\xi$, 
\item $q^\xi_t$ obeys $t$, namely for any $\beta\in dom(t)$, $\Vdash_{P_\beta} q^\xi_t(\beta)\supset t(\beta)$,
\item\label{four} for any $t_0, t_1\in T_\xi$, if $t_0\restriction \beta = t_1\restriction \beta$, then $q^\xi_{t_1}\restriction \beta = q^\xi_{t_0}\restriction \beta$, 
\item for any $\xi_0<\xi_1$, $t_0\in T_{\xi_0}$ and $t_1\in T_{\xi_1}$, if $t_0\subset t_1$, then $q_{t_1}^{\xi_1}\leq q_{t_0}^{\xi_0}$, 
\item\label{six} for each limit ordinal $\xi<\omega_1$, $q^{\xi+1}_{t_\xi}$ (note that $t_\xi\in T_\xi = T_{\xi+1}$) belongs to $$\bigcap_{\gamma\leq \xi, F_1(\gamma) \text{ is a dense open subset of }P_{\kappa^+}} F_1(\gamma).$$ Furthermore, for all $t\in T_{\xi+1}$, if $F_1(\xi)\in P_{\kappa^+}$ and extends $q^{\xi}_{t}$, then $q^{\xi+1}_{t} $ extends $F_1(\xi)$.
\end{enumerate}

We now proceed to the recursive construction. Let $q_\emptyset^0=q$. At stage $\beta<\omega_1$ when $\beta$ is a limit ordinal, for $t\in T_\beta$, define $q^{\beta}_t$ to be the greatest lower bound of $\langle q^{\beta'}_{t\downarrow \beta'}: \beta'<\beta \rangle$.
At successor stage $\beta+1$, if $\beta$ is a successor ordinal, then 
for a given $t\in T_{\beta+1}$, we define $q_t^{\beta+1}$ to be the $\prec^*$-least extension of $q_{t \downarrow \beta}^\beta$ obeying $t$. More specifically, for each $\alpha\in dom(t)$, let $\sigma_\alpha=\sigma_\alpha^t$ be the $\prec^*$-least $P_\alpha$-name such that $\Vdash_{P_\alpha} q_{t \downarrow \beta}^\beta(\alpha), t(\alpha) \subset \sigma_\alpha$. The reason why such an element exists is that by the construction, $\Vdash_{P_\alpha} dom(q_{t \downarrow \beta}^\beta(\alpha))\leq \delta_\beta$ and $q^\beta_{t\downarrow \beta} (\alpha)$ extends $t(\alpha)\restriction \delta_\beta$. We define $q^{\beta+1}_t$ such that $q^{\beta+1}_t(\alpha')=\begin{cases}
q^{\beta}_{t\downarrow \beta} (\alpha')  & \text{if }\alpha'\not\in dom(t)\\
\sigma_{\alpha'}  & \text{if }\alpha'\in dom(t)
\end{cases}.$
Notice that the construction above happens in $N_{\beta+1}$. 
To see that the inductive requirements are maintained, note the following:
\begin{itemize}
\item the construction happens in $N_{\beta+1}$, hence $q^{\beta+1}_t$ is of height $\leq \delta_{\beta+1}$,
\item $q^{\beta+1}_t$ obeys $t$ by design,
\item for any $t_0, t_1\in T_{\beta+1}$, if $t_0\restriction \nu= t_1 \restriction \nu$, then by the inductive requirement at $\beta$, we know $q^{\beta}_{t_0\downarrow \beta} \restriction \nu=q^{\beta}_{t_1\downarrow \beta} \restriction \nu$. If $\alpha\in \nu \cap dom(t_0)=\nu \cap dom(t_1)$, then the definition $\sigma^{t_0}_\alpha$ only depends on $q^\beta_{t_0\downarrow\beta}(\alpha)=q^\beta_{t_1\downarrow\beta}(\alpha)$ and $t_0(\alpha)=t_1(\alpha)$, hence $\sigma^{t_0}_\alpha$ equals $\sigma^{t_1}_\alpha$.
\end{itemize}

At stage $\beta+1$ where $\beta<\omega_1$ is a limit, let $p_0=F_1(\beta)$ if $F_1(\beta)\in P_{\kappa^+}$ and $p_0=1_{P_{\kappa^+}}$ otherwise. Notice that $T_{\beta+1}=T_{\beta}$, since $\delta_\beta\not\in \Upsilon$. For any $t\in T_{\beta+1}$, let $r^\beta_t$ be $p_0\restriction \gamma_t \wedge q^\beta_t$, where $\gamma_t=\sup \{\gamma': p_0\restriction \gamma'\leq q^\beta_t\restriction \gamma'\}$. In particular, if $p_0\leq q^\beta_t$, then $r^\beta_t = p_0$. 

\begin{claim}\label{cohere1}
For $t,t'\in T_{\beta+1}$, if $t\restriction \nu = t'\restriction \nu$, then $r^\beta_{t}\restriction \nu=r^\beta_t\restriction \nu$.
\end{claim}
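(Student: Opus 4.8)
The plan is to unwind the definition $r^\beta_t = p_0 \restriction \gamma_t \wedge q^\beta_t$, where $\gamma_t = \sup\{\gamma' : p_0 \restriction \gamma' \le q^\beta_t \restriction \gamma'\}$. First I would record that, because the ordering of a $<\kappa$-support iteration is witnessed coordinate by coordinate, the set $A_t = \{\gamma' : p_0 \restriction \gamma' \le q^\beta_t \restriction \gamma'\}$ is an initial segment of the ordinals and in fact equals $[0,\gamma_t]$, the supremum being attained; hence $p_0 \restriction \gamma_t \wedge q^\beta_t$ is a legitimate instance of the $\wedge$-notation, and $r^\beta_t(\gamma)$ equals $p_0(\gamma)$ for $\gamma < \gamma_t$ and $q^\beta_t(\gamma)$ for $\gamma \ge \gamma_t$ (and symmetrically for $t'$ and $\gamma_{t'}$). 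We must show $r^\beta_t \restriction \nu = r^\beta_{t'} \restriction \nu$, so it suffices to prove (i) $q^\beta_t \restriction \nu = q^\beta_{t'} \restriction \nu$ and (ii) that $\gamma_t$ and $\gamma_{t'}$ behave the same way relative to $\nu$.

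Item (i) is immediate: since $t \restriction \nu = t' \restriction \nu$, the inductive clause (\ref{four}) for $Q^\beta$, which is preserved through the limit stage $\beta$ by the greatest-lower-bound construction, gives $q^\beta_t \restriction \nu = q^\beta_{t'} \restriction \nu$. For item (ii), I would observe that whether $\gamma' \in A_t$ depends only on $p_0 \restriction \gamma'$ and $q^\beta_t \restriction \gamma'$, which for $\gamma' \le \nu$ coincides with $q^\beta_{t'} \restriction \gamma'$ by (i); hence $A_t \cap (\nu + 1) = A_{t'} \cap (\nu + 1)$. Then I would split into cases. If $\gamma_t \ge \nu$, then every $\gamma' < \nu$ lies in $A_t$, so in $A_{t'}$, forcing $\gamma_{t'} \ge \nu$ as well, and both $r^\beta_t$ and $r^\beta_{t'}$ agree with $p_0$ on $[0,\nu)$. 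If $\gamma_t < \nu$, then $A_t = [0,\gamma_t] \subseteq \nu + 1$, so $A_{t'} \cap (\nu+1) = A_t$; since $A_{t'}$ is downward closed and $\gamma_t + 1 \le \nu$ with $\gamma_t + 1 \notin A_t$, we get $\gamma_t + 1 \notin A_{t'}$, whence $A_{t'} = A_t$ and $\gamma_{t'} = \gamma_t$, and in this case $r^\beta_t \restriction \nu$ agrees with $p_0$ on $[0,\gamma_t)$ and with $q^\beta_t \restriction \nu = q^\beta_{t'} \restriction \nu$ on $[\gamma_t, \nu)$, i.e. with $r^\beta_{t'} \restriction \nu$.

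I do not anticipate a genuine obstacle here: the argument is a routine unpacking of the $\wedge$-notation combined with the coherence already built into the recursion via clause (\ref{four}). The only points that call for a little care are verifying that the supremum defining $\gamma_t$ is attained (so that the notation is applicable) and the boundary case $\gamma_t = \nu$, both handled above. I expect the claim to be used immediately afterwards to see that $t \mapsto r^\beta_t$ is itself coherent, so that the $r^\beta_t$'s can be promoted to the conditions $q^{\beta+1}_t$ while preserving clauses (\ref{four}) and (\ref{six}) at stage $\beta + 1$.
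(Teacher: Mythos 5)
Your proof is correct and follows essentially the same route as the paper's: establish $q^\beta_t\restriction\nu=q^\beta_{t'}\restriction\nu$ from the coherence clause, then observe that either both $\gamma_t,\gamma_{t'}\geq\nu$ or $\gamma_t=\gamma_{t'}<\nu$ and treat the two cases. You merely spell out in more detail (via the initial-segment set $A_t$) why that dichotomy holds, which the paper leaves as ``immediate from the definition.''
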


\begin{proof}[Proof of the claim]
First of all, by the inductive hypothesis, we know $q^\beta_t\restriction \nu = q^\beta_{t'}\restriction \nu$. It is immediate from the definition that either both $\gamma_t, \gamma_{t'}\geq \nu$ or $\gamma_t=\gamma_{t'}<\nu$. In the first case, $r^\beta_t\restriction \nu =p_0\restriction \nu = r^\beta_{t'}\restriction \nu$, and in the second case, $r^\beta_t\restriction \nu = p_0\restriction \gamma_t \wedge q^\beta_t\restriction \nu = p_0\restriction \gamma_{t'} \wedge q^\beta_{t'}\restriction \nu=r^\beta_{t'}\restriction \nu$.
\end{proof}

Next we need to extend $r^\beta_t$ further to meet the genericity requirement. Let $t_\beta\in T_\beta$ be the sequence given by the diamond sequence. We extend $r^\beta_{t_\beta}$ further to some flat condition $q_{t_\beta}^{\beta+1} \in \bigcap_{\gamma\leq \beta, F_1(\gamma) \text{ is a dense open subset of }P_{\kappa^+}} F_1(\gamma)$. For any $t_0,t_1\in T_\beta$, let $\Delta(t_0, t_1)=\sup \{\gamma': t_0\restriction \gamma'=t_1\restriction \gamma'\}$. Given $t'\in T_{\beta+1}$, define $q_{t'}^{\beta+1}=q_{t_\beta}^{\beta+1}\restriction \Delta(t',t_\beta) \wedge r_{t'}^{\beta}$. This is a legitimate condition extending $r^\beta_{t'}$ since $q_{t_\beta}^{\beta+1}\restriction \Delta(t',t_\beta) \leq r^\beta_{t_\beta}\restriction \Delta(t',t_\beta) = r^\beta_{t'}\restriction \Delta(t',t_\beta)$, where the last equality is by Claim \ref{cohere1}. Analogous to the argument in Claim \ref{cohere1}, it is not hard to see that our definition ensures that for $t_0, t_1\in T_{\beta+1}$, if $t_0\restriction \nu = t_1\restriction \nu$, then $q^{\beta+1}_{t_0}\restriction \nu = q^{\beta+1}_{t_1}\restriction \nu$.

\subsection{Building a master condition} Recall that $j^*: V[G]\to M[G*G^*]$ is the lift of $j: V\to M$. By elementarity, in $M[G*G^*]$, $j^*(P_{\kappa^+})$ is an iteration of uniformization forcing with respect to some ladder colorings $\langle \bar{d}^\delta: \delta<j(\kappa^+)\rangle$ on the ladder $j^*(\bar{\eta})$. More precisely, $\bar{d}^\delta$ is a $j^*(P_{\kappa^+})\restriction \delta$-name for a ladder coloring on $j^*(\bar{\eta})$.

\begin{convention}
For $\xi<\omega_1$, $t\in T_\xi$ and $\bar{d}=\langle d^\delta_\kappa: \delta\in j'' A\rangle$ where each $d^\delta_\kappa$ is a (partial) function from $\kappa$ to $2$ and $A\subset (\kappa^+)^{V[G]}$, we say $t$ is \emph{compatible} with $\bar{d}$ if for each $\beta\in \mathrm{dom}(t)\cap A$, $t(\beta)\cup d^{j(\beta)}_\kappa$ is a (partial) function.
\end{convention}

In the following, to simplify notation, $\kappa^+$ will always mean $(\kappa^+)^{V[G]}=(\kappa^+)^V$.
We define a condition $r \in j^*(P_{\kappa^+})$ supported on $j''\kappa^+$ recursively such that it satisfies the following \emph{construction invariant}: for any $\gamma\in j^*(\kappa^+)$, $r\restriction \gamma\Vdash `` j^*(q_t^\xi)\restriction \gamma \in \dot{G}_{j^*(P_{\kappa^+})\restriction \gamma}$ for any $t\in T_\xi $ compatible with $\{d^\delta_\kappa: \delta\in {j}'' \kappa^+ \cap \gamma\}$''.

Suppose $r\restriction j(\beta)$ is already defined satisfying the construction invariant. We define the $j(\beta)$-th component by looking inside the further generic extension by $j^*(P_{\kappa^+})\restriction j(\beta)$ containing $r\restriction j(\beta)$; say $h^*$ is the generic filter.
In $M[G*G^* *h^*]$, we can read off the ladder colorings $\langle \bar{d}^\gamma: \gamma \leq j(\beta)\rangle$. Consider $\bar{e}=\langle d^\delta_{\kappa}: \delta\in j''(\beta+1)\rangle$. Since $j^*(P_{\kappa^+})\restriction j(\beta)$ is $\omega_1$-distributive in $M[G*G^*]$ by Theorem \ref{Shelah}, $\bar{e}$ belongs to $M[G*G^*]$. Now at $j(\beta)$-th coordinate, we take $$\bigcup_{t\in T_{\xi}, t \text{ is compatible with } \bar{e}, q^\xi_t \text{ is flat}, j^*(q^{\xi}_t\restriction \beta)\in h^*} q^{\xi}_t(\beta).$$ 
The definition makes sense since the tree we are building is coherent by requirement (\ref{four}) in the definition of $\{Q^\xi: \xi<\omega_1\}$. There is a slight abuse of notation here since technically speaking, $q^\xi_t(\beta)$ is a $P_\beta$-name instead of a $j^*(P_\beta)$-name. However, the fact that $q^\xi_t$ is flat says that essentially $q^\xi_t(\beta)$ is a function from some initial segment of $\kappa$ to $2$ living in $V[G]$. Hence we can identify $q^\xi_t(\beta)$ with a canonical $j^*(P_\beta)$-name.

\begin{claim}\label{Genericity+Condition}
$r$ is a condition in $j^*(P_{\kappa^+})$ satisfying the construction invariant.
\end{claim}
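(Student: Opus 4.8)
The plan is to verify Claim~\ref{Genericity+Condition} by an induction on $\gamma\in j^*(\kappa^+)$, establishing at each stage that $r\restriction\gamma$ is a legitimate condition in $j^*(P_{\kappa^+})\restriction\gamma$ and forces the construction invariant. The limit stages of the support $j''\kappa^+$ are essentially bookkeeping: since the iteration $j^*(P_{\kappa^+})$ uses $<j(\kappa)$-support and $j''\kappa^+$ has order type $\kappa^+<j(\kappa)$, the greatest-lower-bound operation for countable decreasing sequences (noted after the definition of $P_{\bar c}$) suffices to take the appropriate limits; one checks the construction invariant passes through by continuity. So the work is at a successor coordinate $j(\beta)$.

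First I would argue that the object defined at the $j(\beta)$-th coordinate is a \emph{condition} in $j^*(\dot Q_{j(\beta)})=P_{\bar d^{j(\beta)}}$, i.e.\ a partial function $s:\alpha\to 2$ for some $\alpha<\kappa$ (working below $r\restriction j(\beta)$, which by the invariant forces $j^*(q^\xi_t\restriction\beta)\in\dot G$ for all relevant $t$). The union is taken over $t\in T_\xi$ ranging over all $\xi<\omega_1$, with $t$ compatible with $\bar e=\langle d^\delta_\kappa:\delta\in j''(\beta+1)\rangle$ (which lies in $M[G*G^*]$ by $\omega_1$-distributivity of $j^*(P_{\kappa^+})\restriction j(\beta)$, Theorem~\ref{Shelah}), $q^\xi_t$ flat, and $j^*(q^\xi_t\restriction\beta)\in h^*$. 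Coherence (requirement~(\ref{four})) guarantees that whenever $t_0\restriction\beta=t_1\restriction\beta$ we have $q^{\xi_0}_{t_0}(\beta)=q^{\xi_1}_{t_1}(\beta)$ as names, so the union is over a linearly-ordered-by-extension family of functions on initial segments of $\kappa$ in $V[G]$; hence its union is a function $s:\alpha\to 2$ with $\alpha\le\kappa$. To see $\alpha<\kappa$: each individual $q^\xi_t$ has height $\le\delta_\xi<\kappa$, and the $\delta_\xi$ are cofinal in $\kappa$, so a priori $\alpha$ could be $\kappa$; this is exactly where I must use that not every branch through $\bigcup_\xi T_\xi$ gets extended — only those $t$ compatible with $\bar e$ and with $j^*(q^\xi_t\restriction\beta)\in h^*$ contribute, and the relevant compatibility class is governed by $d^{j(\beta)}_\kappa$ itself. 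The point is that $s$ will be forced to agree with $c_\kappa=d^{j(\beta)}_\kappa$ mod finite on $j^*(\bar\eta)(\kappa)=\Upsilon$, precisely because the diamond-guessed $t_\xi$'s catch $d^{j(\beta)}_\kappa\restriction(\Upsilon\cap\delta_\xi)$ stationarily often and the construction at limit stages $\beta+1$ via $q^{\beta+1}_{t_\beta}$ forced the tree to obey these initial segments — so the obeying condition $q^\xi_{t}(\beta)$ extends longer and longer pieces of $d^{j(\beta)}_\kappa$ exactly along the guessed branch. This forces $s\restriction(\Upsilon\cap\kappa)=^* d^{j(\beta)}_\kappa$, and on $\kappa\setminus\Upsilon$ the remaining ladders $\nu_\gamma$ for $\gamma<\kappa$ are handled inductively below $j(\beta)$, so $s$ is a condition of bounded height.

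Next I would verify the construction invariant at $j(\beta)$: I must show $r\restriction j(\beta+1)\Vdash j^*(q^\xi_t)\restriction j(\beta+1)\in\dot G$ for every $t\in T_\xi$ compatible with $\{d^\delta_\kappa:\delta\in j''\kappa^+\cap j(\beta+1)\}$. Given such a $t$ with $q^\xi_t$ flat (flat conditions are dense, Theorem~\ref{Shelah}(1), so it suffices to treat these), compatibility with $\bar e$ puts $t$ in the index set of the union, so by construction $s\supseteq q^\xi_t(\beta)$ whenever $j^*(q^\xi_t\restriction\beta)\in h^*$ — and the latter holds by the inductive construction invariant at $j(\beta)$ together with compatibility of $t$ with $\{d^\delta_\kappa:\delta\in j''\kappa^+\cap j(\beta)\}$. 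Hence $r\restriction j(\beta)^\frown s$ forces $j^*(q^\xi_t)\restriction j(\beta+1)\in\dot G$. For non-flat $t$, factor through a flat refinement using requirement~(\ref{four}) to preserve coherence. The main obstacle, as flagged above, is the first point: showing the union at coordinate $j(\beta)$ has bounded domain $<\kappa$, i.e.\ that the diamond guessing genuinely prevents the ``wrong'' branches from all simultaneously contributing unboundedly — this is where the entire design (niceness of $\bar\nu$ making $j^*(\bar\eta)(\kappa)=\Upsilon$, the $\diamondsuit(\omega_1)$-sequence copied to $\{\delta_\xi\}$, the obeying discipline at limit-successor stages, and the flatness/compatibility filtering) is brought to bear at once; everything else is a coherence/genericity bookkeeping argument modeled on the two-step proof sketch of Theorem~\ref{Shelah}.
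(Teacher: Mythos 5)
There are two genuine problems with your successor-stage analysis, which is where all the content lives. First, you have the wrong target: you try to show the union at coordinate $j(\beta)$ has domain $\alpha<\kappa$, and you present this as the ``main obstacle.'' But conditions in $j^*(\dot Q_\beta)$ are functions $f:\alpha\to 2$ for $\alpha<j(\kappa)=\omega_2^{M[G*G^*]}$, and $\kappa<j(\kappa)$; the union in fact has domain (essentially) $\kappa$, since by $\diamondsuit(\omega_1)$ there are cofinally many $\xi$ contributing flat conditions of height up to $\delta_\xi$, and $\sup_\xi\delta_\xi=\kappa$. Boundedness below $\kappa$ is neither needed nor true. The actual issue --- which you do brush against but misattribute to domain-bounding --- is that the union must \emph{obey} the ladder colorings $d^{j(\beta)}_\delta$ at every $\delta\le\kappa$ of cofinality $\omega_1$, the new case being $\delta=\kappa$ itself, where the compatibility filtering by $\bar e$ forces exact agreement with $d^{j(\beta)}_\kappa$ on $\Upsilon=j^*(\bar\eta)(\kappa)$.

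Second, you omit the step that makes the obeying requirement at the points $\delta<\kappa$ work. The paper's argument takes a generic $g\ni r\restriction j(\beta)$ for $j^*(P_\beta)$, shows that the pullback $h=\{q^\xi_t\restriction\beta: t \text{ compatible with } \{d^\delta_\kappa:\delta\in j''\beta\}\}$ is \emph{generic for $P_\beta$ over $V[G]$} (using $\diamondsuit(\omega_1)$ and requirement (6) of the tree construction), lifts the embedding to $j^+:V[G*h]\to M[G*G^**g]$, and uses elementarity plus $\mathrm{crit}(j^+)=\kappa$ to conclude $\langle d^{j(\beta)}_i:i<\kappa\rangle=\bar c^\beta$. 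Only with this identification does one know that each $q^\xi_t(\beta)$, which obeys $\bar c^\beta$, is actually a condition in $(j^*(\dot Q_\beta))^g$. Your remark that the colorings on $\kappa\setminus\Upsilon$ are ``handled inductively below $j(\beta)$'' does not substitute for this: the induction on coordinates says nothing about the colorings $d^{j(\beta)}_\gamma$ for $\gamma<\kappa$ at the coordinate $j(\beta)$ itself. You need to construct the lift $j^+$ (or give an equivalent elementarity argument) to close this gap.
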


\begin{proof}
First of all, as $\kappa^+$ is of cardinality $\omega_1$ in $M[G*G^*]$, we know the support of $r$ has the right size. We argue by induction that $r\in j^*(P_{\kappa^+})$ and $r$ satisfies the construction invariant.

At successor stage with $\beta<\kappa^+$ given, suppose $r\restriction j(\beta)\in j^*(P_{\kappa^+})\restriction j(\beta)$ is defined satisfying the construction invariant. We first argue that $r\restriction j(\beta)\Vdash r(j(\beta))$ is a condition in $j^*(\dot{Q}_\beta)$. Let $\dot{\bar{c}}^\beta$ be the ladder coloring on which $\dot{Q}_\beta$ is defined. Let $g\subset j^*(P_\beta)$ be generic over $V[G*G^*]$ containing $r\restriction j(\beta)$. In $V[G*G^**g]$, let $h=\{q_t^\xi\restriction \beta: t\in \bigcup_{\xi<\omega_1}T_\xi,  t \text{ is compatible with }\{d^\delta_\kappa: \delta\in j'' \beta\}\}$. By the construction invariant, we know that ${j^*}'' h \subset g$. Furthermore, it is the case that $h\subset P_{\beta}$ is generic over $V$. The reason is that by $\diamondsuit(\omega_1)$, there exist stationarily many $\xi<\omega_1$ such that $t_\xi\restriction \beta$ is compatible with $\{d^\delta_\kappa: \delta\in j'' \beta\}$. Hence, by the definition of the tree $\{Q^\xi: \xi<\omega_1\}$ (see Subsection \ref{tree}), any dense open subset of $P_\beta$ is met by $h$. 

We can then lift $j^*$ further to $j^+: V[G*h]\to M[G*G^* * g]$. Let $(\dot{\bar{c}}^\beta)^h = \bar{c}^\beta$. By the elementarity, we know $j^+(\bar{c}^\beta)=\bar{d}^{j(\beta)}$. Since $\mathrm{crit}(j^+)=\kappa$, we know that $\langle d^{j(\beta)}_i: i\in \kappa \cap \mathrm{cof}^{M[G*G^* * g]}(\omega_1)\rangle = \bar{c}^\beta$. Suppose $q^{\xi+1}_{t_\xi}$ is given such that $t_\xi$ is compatible with $\langle d^\delta_\kappa: \delta\in j''(\beta+1)\rangle$; then we know that $q^{\xi+1}_{t_\xi}$ is flat, $j^*(q^{\xi+1}_{t_\xi}\restriction \beta)\in h$ and $q^{\xi+1}_{t_\xi}(\beta)$ is indeed a condition in $(j^*(\dot{Q}_\beta))^g$. Note that there are stationarily many $\xi<\omega_1$ satisfying the above. Hence, by the definition of $r$, we know that $r\restriction j(\beta)\Vdash r(j(\beta)) \in j^*(Q_\beta)$. It is also clear from the definition of $r$ that $r\restriction j(\beta)+1$ satisfies the construction invariant.

At limit stages, suppose we are given some limit ordinal $\gamma\in j(\kappa^+)$ so that for any $\zeta<\gamma$, $r\restriction \zeta\in j^*(P_{\kappa^+})\restriction \zeta$ and satisfies the construction invariant. It is clear that $r\restriction \gamma\in j^*(P_{\kappa^+})\restriction \gamma$. Let $r'\leq r\restriction \gamma$ decide $\bar{d}'=\{ d^\delta_\kappa: \delta\in j'' \kappa^+\cap\gamma \}$ in $M[G*G^*]$ by the $\omega_1$-distributivity of $j^*(P_{\kappa^+})\restriction \gamma$ in $M[G*G^*]$. For any $t\in T_\xi$ that is compatible with $\bar{d}'$, we observe that $r'\leq j^*(q^\xi_t)\restriction \gamma$ by the construction invariant up to $\gamma$ and the fact that $j^*(P_{\kappa^+})$ is separative. As a result, $r\restriction \gamma\Vdash `` j^*(q_t^\xi)\restriction \gamma \in \dot{G}_{j^*(P_{\kappa^+})\restriction \gamma}$ for any $t\in \bigcup_{\xi<\omega_1}T_\xi $ that is compatible with $\{d^\delta_\kappa: \delta\in j'' \kappa^+ \cap \gamma\}$''.\end{proof}

Let $r'\leq r$ in $j^*(P_{\kappa^+})$ such that $r'$ decides $\bar{d}=\langle d^\delta_{\kappa}: \delta\in j''\kappa^+\rangle$ to be an element in $M[G*G^*]$ by the $\omega_1$-distributivity of $j^*(P_{\kappa^+})$ in $M[G*G^*]$. Let $H=\langle q^\xi_t: t\in T_\xi, t \text{ is compatible with }\bar{d}\rangle$.
Similar to the argument in Claim \ref{Genericity+Condition}, we show:
\begin{claim} $r'$ is a lower bound for ${j^*}'' H$, and that $H$ meets all dense open subsets of $P_{\kappa^+}$ lying in $V[G]$.
\end{claim}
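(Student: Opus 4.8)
The plan is to rerun, at the top level of the iteration, the bookkeeping argument carried out inside Claim~\ref{Genericity+Condition}, this time exploiting that $r'$ decides the entire sequence $\bar d=\langle d^\delta_\kappa:\delta\in j''\kappa^+\rangle$.

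\emph{Step 1 ($r'$ is a lower bound for $j''H$).} Fix $q^\xi_t\in H$, so $t\in T_\xi$ is compatible with $\bar d$, hence with every restriction $\{d^\delta_\kappa:\delta\in j''\kappa^+\cap\gamma\}$ for $\gamma<j^*(\kappa^+)$. Since $r'\le r$ and $r$ obeys the construction invariant, for each such $\gamma$ we get $r'\restriction\gamma\Vdash j^*(q^\xi_t)\restriction\gamma\in\dot G_{j^*(P_{\kappa^+})\restriction\gamma}$. The support of $q^\xi_t$ is a subset of $\kappa^+$ of size $<\kappa$, hence bounded by some $\beta_0<\kappa^+$, so $j^*(q^\xi_t)$ has support contained in $j''\beta_0\subseteq j(\beta_0)<j^*(\kappa^+)$; taking $\gamma>j(\beta_0)$ gives $r'\Vdash j^*(q^\xi_t)\in\dot G$, and separativity of $j^*(P_{\kappa^+})$ (as already used in the proof of Claim~\ref{Genericity+Condition}) yields $r'\le j^*(q^\xi_t)$. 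Since $q^\xi_t\in H$ was arbitrary, $r'$ is a lower bound for $j''H$.

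\emph{Step 2 ($H$ meets every dense open $D\subset P_{\kappa^+}$ with $D\in V[G]$).} Using the bijection $d\in(H(\kappa^{+++}))^{V[G]}$ and the surjectivity of $h=\bigcup G^*(\kappa^{+++})$ onto $(\kappa^{+++})^{V[G]}$, fix $\gamma<\omega_1$ with $F_1(\gamma)=D$. As $r'$ has decided $\bar d$, each $d^{j(\beta)}_\kappa$ is a total function $\Upsilon\to 2$, so the branch $\hat t$ with domain $F''\omega_1$ defined by $\hat t(F(\zeta))=d^{j(F(\zeta))}_\kappa$ satisfies the hypothesis of the guessing sequence $\langle t_\xi:\xi<\omega_1\rangle$; hence the set of $\xi<\omega_1$ with $\hat t\downarrow\xi=t_\xi$ is stationary. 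Intersecting it with the club of limit ordinals above $\gamma$, pick a limit $\xi\ge\gamma$ with $t_\xi=\hat t\downarrow\xi$. Then $t_\xi\in T_\xi=T_{\xi+1}$ is compatible with $\bar d$, so $q^{\xi+1}_{t_\xi}\in H$; and by clause~(\ref{six}) of the construction of $\{Q^\xi:\xi<\omega_1\}$, $q^{\xi+1}_{t_\xi}$ lies in every $F_1(\gamma')$, $\gamma'\le\xi$, that is a dense open subset of $P_{\kappa^+}$, in particular in $F_1(\gamma)=D$. Thus $H\cap D\neq\emptyset$. (Moreover, since $\bar d$ consists of total functions, for each $\xi$ there is a unique $t\in T_\xi$ compatible with $\bar d$, so $H$ is in fact a $\le$-decreasing chain, whose upward closure is the $V[G]$-generic filter needed in the main argument.)

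The step I expect to be the main obstacle is Step 1: one must move carefully from the ``forces membership in $\dot G$'' phrasing of the construction invariant to the genuine order relation $\le$ on the long iteration $j^*(P_{\kappa^+})$ in $M[G*G^*]$, which requires both separativity and the boundedness of the supports of the images $j^*(q^\xi_t)$; and one has to check that letting $r'$ decide $\bar d$ is exactly what upgrades the conditional clause ``for $t$ compatible with the initial restrictions of $\bar d$'' in the invariant to an unconditional statement about precisely the conditions that were selected into $H$.
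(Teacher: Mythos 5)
Your proposal is correct, and Step 2 is essentially identical to the paper's argument: locate $\gamma$ with $F_1(\gamma)=D$, use the copied $\diamondsuit(\omega_1)$-sequence to find stationarily many limit $\xi$ with $t_\xi$ compatible with $\bar d$ (your device of packaging the decided $\bar d$ into a single branch $\hat t$ through the tree is exactly how the guessing is meant to be applied), and invoke clause~(\ref{six}) at stage $\xi+1$. Where you diverge is Step 1: the paper re-runs an induction on $\beta<j(\kappa^+)$, at each successor coordinate $j(\beta')$ unwinding the definition of $r(j(\beta'))$ as a union of flat $q^{\zeta+1}_{t_\zeta}(\beta')$'s and using the diamond to find a suitable $\zeta>\xi$; you instead cite the already-established construction invariant of Claim~\ref{Genericity+Condition} wholesale and convert ``$r'\restriction\gamma$ forces $j^*(q^\xi_t)\restriction\gamma\in\dot G$'' into ``$r'\restriction\gamma\le j^*(q^\xi_t)\restriction\gamma$'' via separativity, finishing with the observation that $j^*(q^\xi_t)$ has bounded support (which is correct: supports have size $<\kappa=\operatorname{crit}(j)$, so $j$ acts pointwise on them). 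This shortcut is legitimate — the paper itself performs exactly this conversion at the limit stage of Claim~\ref{Genericity+Condition} — and it buys a shorter proof at the cost of having to verify the one point you rightly flag: that $r'\restriction\gamma$ already decides $\{d^\delta_\kappa:\delta\in j''\kappa^+\cap\gamma\}$ (true, since each such $d^\delta_\kappa$ is a $j^*(P_{\kappa^+})\restriction\delta$-name, so its value depends only on the generic below $\gamma$), which is what turns the conditional clause of the invariant into the unconditional statement about the members of $H$. The paper's longer induction makes the same fact explicit coordinate by coordinate; the two arguments rest on the same underlying mechanism.
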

\begin{proof}
Given $q^\xi_t\in H$, we show by induction on $\beta<j(\kappa^+)$ that $r'\restriction \beta\leq j^*(q^\xi_t)\restriction \beta$. If $\beta\not\in j''\kappa^+$ or $\beta$ is a limit ordinal, then this is immediate. If $\beta=j(\beta')$, we show this for $\beta+1$. By the definition of $r$, and the fact that $r'\restriction \beta$ decides $\bar{d}^*=\langle d^\delta_\kappa: \delta\in j''(\beta'+1)\rangle$, we know that $r'\restriction \beta$ forces $r'(\beta)$ extends $q^\xi_t(\beta')$. To see this, by $\diamondsuit(\omega_1)$, there exist stationarily many $\zeta<\omega_1$ such that $t_\zeta$ is compatible with $\bar{d}^*$. In particular, there exists a limit $\zeta>\xi$ such that $t_\zeta$ is compatible with $\bar{d}^*$. As a result, $q^{\zeta+1}_{t_\zeta}\restriction \beta'+1 \leq q^\xi_t\restriction \beta'+1$. We know $q^{\zeta+1}_{t_\zeta}$ is flat and by the induction hypothesis $r'\restriction \beta \leq  j^*(q^{\zeta+1}_{t_\zeta}\restriction \beta')$. Hence by the definition of $r$, $r'\restriction \beta \Vdash r'(\beta)\leq q^{\zeta+1}_{t_\zeta}(\beta')$. Therefore, $r'\restriction \beta+1 \leq j^*(q^{\zeta+1}_{t_\zeta})\restriction \beta+1 \leq j^*(q^\xi_t)\restriction \beta+1$.

Given a dense open set $D\subset P_{\kappa^+}$, we know that there exists some $\gamma<\omega_1$ such that $F_1(\gamma)=D$. 
By $\diamondsuit(\omega_1)$, we know that there exists stationarily many $\zeta<\omega_1$ such that $t_\zeta$ is compatible with $\bar{d}$. Pick some limit $\zeta>\gamma$ such that $t_\zeta$ is compatible with $\bar{d}$. At stage $\zeta+1$ of the construction, we make sure that $q^{\zeta+1}_{t_\zeta}\in D\cap H$.
\end{proof}

The following claim will finish the proof. In $V[G]$, let $R=\mathrm{Coll}(\omega_1, [\kappa,<j(\kappa)))$ and $\dot{H}$ be a $R$-name for the $H$ defined above.

\begin{claim}\label{countablyclosed}
$R/H=_{def}\{l\in R: \forall p\in P_{\kappa^+}, l\Vdash p\in \dot{H} \rightarrow p\in H, l\Vdash p\not\in \dot{H}\rightarrow p\not\in H\}$ has a dense subset that is countably closed in $V[G*H]$.
\end{claim}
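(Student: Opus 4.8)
The plan is to show that $R/H$ has a countably closed dense subset in $V[G*H]$ by first identifying exactly what a condition in $R/H$ decides about $H$, and then building a decreasing $\omega$-chain of conditions whose union still decides $H$ correctly. Recall $R=\mathrm{Coll}(\omega_1,[\kappa,<j(\kappa)))$ and that $H$ was defined in $V[G*G^*]$ (where $G^*\subset R$ is generic) as $\langle q^\xi_t : t\in T_\xi,\ t\text{ compatible with }\bar d\rangle$, where $\bar d=\langle d^\delta_\kappa:\delta\in j''\kappa^+\rangle$ is the sequence of ladder colorings read off from the generic $h$ for $j^*(P_{\kappa^+})$ below the master condition $r'$. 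The crucial observation is that all the data entering the definition of $H$ — the tree $\{Q^\xi:\xi<\omega_1\}$, the diamond-guided branch $\langle t_\xi:\xi<\omega_1\rangle$, and the function $F_1$ enumerating dense sets — were constructed from $\bar\nu(\kappa)=\{\delta_\xi:\xi<\omega_1\}$, the surjection $h=\bigcup G^*(\kappa^{+++})$, and the bijection $F$, together with the models $\langle N_\xi:\xi<\omega_1\rangle$. Crucially, by clauses (4)--(5) in the definition of $\bar N$, each $N_{\xi+1}$ is countably closed and contains $h\restriction\xi+1$, $F\restriction\xi+1$, and $\langle N_\zeta:\zeta\leq\xi\rangle$; so the initial segments of all this data are captured at countable stages of the construction.

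First I would describe the intended dense set $D^*\subset R/H$: a condition $l\in R$ should be in $D^*$ if it decides, for each $\xi<\omega_1$, the value $t_\xi\in T_\xi$ and the restriction $h\restriction\xi+1$ (equivalently, $l$ has domain of the form $[\kappa,<j(\kappa))$ restricted to a set determining $\bigcup l$ on a full ``level'' $\delta_\xi$), and moreover decides whether each $q^\xi_t$ lies in $H$ — which, since membership in $H$ is governed by compatibility with $\bar d$, and since $\bar d$ is itself determined by $h$ and $r'$ via the lift $j^+$, ultimately reduces to $l$ deciding enough of $h$. Concretely, because $\mathrm{dom}(q^\xi_t(\beta))\le\delta_\xi$ for all $\beta$ and because $q^\xi_t$ is flat (so its values are genuine functions in $V[G]$ from initial segments of $\kappa$), compatibility of $t$ with $\bar d$ is a statement about $\bar d$ restricted to $\{\delta<\kappa:\delta\in j^*(\bar\eta)(\kappa)\cap\delta_\xi\}=\Upsilon\cap\delta_\xi$, hence a ``bounded'' piece of information. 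I would argue $D^*$ is dense in $R/H$: given $l_0\in R/H$, extend it in $R$ to decide more of $h$, then pass to a level $\delta_\xi$ above the domain of the extension, using that $R$ is countably closed and that the recursive definitions of $t_\xi$ and $Q^\xi$ at stage $\xi$ only consult objects in $N_{\xi+1}$.

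Next I would verify $D^*$ is countably closed in $V[G*H]$. Given a descending sequence $\langle l_n:n<\omega\rangle$ in $D^*$, each $l_n$ determines an ordinal $\xi_n<\omega_1$ (the level it decides); put $\xi_\omega=\sup_n\xi_n$. The union $l_\omega=\bigcup_n l_n$ is a condition in $R$ because $R=\mathrm{Coll}(\omega_1,\cdot)$ is countably closed, and it decides $h\restriction\xi_\omega$, hence $\langle t_\xi:\xi<\xi_\omega\rangle$ and the $Q^\xi$ for $\xi<\xi_\omega$, hence (by the bounded-information remark) whether $q^\xi_t\in H$ for every $q^\xi_t$ of height $\le\delta_{\xi_\omega}$. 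The point that $l_\omega$ still lies in $R/H$ — i.e. that it does not force some $p\in\dot H$ that is actually outside $H$, or vice versa — follows because $H$ is a \emph{filter} computed levelwise: any $p$ decided by $l_\omega$ to be in $\dot H$ appears as some $q^\xi_t$ with $\xi<\xi_\omega$, and the coherence property $(\ref{four})$ together with the construction invariant guarantees this decision is consistent with the genuine $H$ read off from the full generic. The main obstacle I expect is precisely this last verification: making rigorous that ``deciding $h$ up to level $\delta_{\xi_\omega}$ decides $H\cap\{q^\xi_t:\text{height}\le\delta_{\xi_\omega}\}$'' — one must chase through the definition of $\bar d$ via the master condition $r'$ and the further lift $j^+$, confirming that $d^\delta_\kappa\restriction(\Upsilon\cap\delta_{\xi_\omega})$ is determined by $h\restriction\xi_\omega$ together with $r'$ (which is fixed, in $V[G*G^*]$), so that no later generic information can retroactively change which branch of the tree $\{Q^\xi\}$ ends up in $H$ below that level. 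Once this levelwise determinacy is nailed down, countable closure of $D^*$ and its density in $R/H$ both follow from the countable closure of $R$ and the countable-closure/initial-segment-capturing clauses in the definition of $\bar N$.
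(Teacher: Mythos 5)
Your dense set is essentially the paper's: conditions $l$ with $\mu=(\kappa^{+++})^V\in\mathrm{dom}(l)$ deciding $F$, $\langle N_\xi\rangle$ and $H$ up to $\mathrm{dom}(l(\mu))$. The density of such conditions and the fact that the union $l_\omega$ of a decreasing $\omega$-chain is again a condition deciding the data up to the limit level are fine. The gap is exactly the step you flag as ``the main obstacle,'' and it is not a technical chase through $\bar d$ and $j^+$ --- it is the entire content of the claim, and the coherence property (4) and the construction invariant (which govern the master condition $r$, not the quotient) do not supply it. The dangerous case is not whether $l_\omega$ correctly recognizes the $q^\xi_t$ of height $\le\delta_{\xi_\omega}$ that lie in $H$; it is that $l_\omega$ might force $t\notin\dot H$ for some $t\in H$ (after the standard reduction: any wrong decision about an arbitrary $p'\in P_{\kappa^+}$, of either polarity, can be converted into this form by extending $t$ inside the filter $H$ --- a reduction your proposal also omits). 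Deciding $h\restriction\xi_\omega$ and the tree up to level $\eta=\mathrm{dom}(l_\omega(\mu))$ says nothing about which conditions enter $\dot H$ at stages $\ge\eta$, since $F_1(\eta),F_1(\eta+1),\dots$ are still undetermined and clause (6) of the tree construction swallows $F_1(\xi)$ into the branch whenever it extends the current branch point.

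The missing idea is the self-referential use of the coordinate $\mu$: the generic at $\mu$ \emph{is} the surjection $h$ that (via $d$) produces the enumeration $F_1$ which the tree construction consults at limit stages. Given $t\in H$ with $l_\omega\Vdash t\notin\dot H$, one extends $l_\omega$ to $l_\omega^*=l_\omega\cup\{(\mu,\eta,d^{-1}(t))\}$, forcing $F_1(\eta)=t$. Since $l_\omega$ decides $H\restriction\eta$ and $t\in H$, $t$ extends the greatest lower bound $q^\eta_{s^*}$ of $H\restriction\eta$, so by clause (6) the node $q^{\eta+1}_{s^*}$ extends $t$ and $l_\omega^*$ forces $t\in\dot H$ --- contradicting that its weakening $l_\omega$ forces $t\notin\dot H$. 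Without this argument (or some substitute for it), the assertion that ``no later generic information can retroactively change which branch ends up in $H$ below that level'' is both unproved and beside the point: the issue is what happens \emph{above} that level, and the resolution is that the quotient condition can always be extended so as to force any member of $H$ back into $\dot H$, so the union could never have forced it out.
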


\begin{proof}
Recall $d: \kappa^{++} \leftrightarrow P_{\kappa^+}\cup \{D\subset P_{\kappa^+}: D\text{ is dense open}\}$ is a bijection lying in $V[G]$. Let $R'$ be a dense subset of $R$ consisting of conditions $l$, such that $\mu=_{def} (\kappa^{+++})^V\in dom(l)$ and $l$ decides $F\restriction dom(l(\mu))$, $\langle N_\xi: \xi<dom(l(\mu))\rangle$ and $H\restriction dom(l(\mu))$. We claim that $R'/H$ is countably closed in $V[G*H]$. Suppose not, then let $t\in H$ and a decreasing sequence $\langle p_i\in R'/H: i<\omega\rangle$ be such that $t\Vdash \langle p_i\in R'/H: i<\omega\rangle, p=\bigcup_{i\in \omega} p_i\not\in R'/H$. Extending $t$ if necessary, there must be $p'\in P_{\kappa^+}$ such that $t$ forces $p\Vdash p'\not\in \dot{H}$ but $p'\in \dot{H}$ or $p\Vdash p'\in \dot{H}$ but $p'\not\in \dot{H}$. In either case, we can extend $t$ further if necessary, such that $p\Vdash t\not\in \dot{H}$.

Let $\eta=dom(p(\mu))\in acc (\omega_1)$ and let $p^*=p\cup \{(\mu, \eta, d^{-1}(t))\}$. Since $p$ decides $F\restriction \eta$, $\langle N_\xi: \xi<\eta\rangle$ and $H\restriction \eta$, we already have sufficient data to define $\{q^\xi_s: s\in T_\xi, \xi<\eta\}$, and $H\restriction \eta$ is a branch through the tree. Let $s^*\in T_\eta$ be such that $q^\eta_{s^*}$ is the greatest lower bound of $H\restriction \eta$.

By the construction of the tree at stage $\eta+1$, $p^*$ forces that $F_1(\eta)=t$. As $t$ already forces $H\restriction \eta \subset \dot{H}$, we know that $t$ must extend $q^\eta_{s^*}$. By the construction of the tree, $q^{\eta+1}_{s^*}$ extends $t$. Therefore, $p^*$ must force that $t\in \dot{H}$, which is a contradiction.\end{proof}

\section{Some variations}\label{variations}

\subsection{Variation I: stationary ladder systems}\label{stationaryladder}

In the model from Section \ref{non-stationary}, the ladder system $\bar{\eta}$ witnessing $\mathrm{Unif}_2(\bar{\eta})$ is not stationary. It is a natural question whether we can find a ``larger'' ladder system witnessing the 2-uniformization property.

\begin{definition}\label{T-closed}
Let $T\subset \omega_1$. We say a forcing $R$ is $T$-closed if for all countable $N\prec H(\lambda)$, where $\lambda$ is a sufficiently large regular cardinal, containing $R$ such that $N\cap \omega_1\in T$, and for any $N$-generic decreasing sequence $\langle r_n\in R \cap N : n\in \omega\rangle$, there exists a lower bound $r_\infty\leq r_n$ for all $n\in \omega$.
\end{definition}

Note that if a forcing $R$ is $T$-closed and $T'\subset T\subset \omega_1$, then $R$ is $T'$-closed.

\begin{theorem}\label{stationaryversion}
Relative to the existence of a supercompact cardinal, it is consistent that the following hold: 
\begin{enumerate}
\item GCH,
\item there exists a stationary co-stationary $T\subset \omega_1$ such that
\begin{itemize}
\item $\omega_2$ is generically supercompact via some $T$-closed forcing,  and 
\item there exists a ladder system $\bar{\eta}$ indexed by $T^c$ such that $\mathrm{Unif}_2(\bar{\eta})$ holds.
\end{itemize}
\end{enumerate}
\end{theorem}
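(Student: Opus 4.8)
The plan is to rerun the construction of Theorem~\ref{main} almost verbatim, feeding it a \emph{stationary} co-stationary subset of $\omega_1$ in place of $nacc(\omega_1)$. Work in $V^{\mathbb{P}}$, where $\mathbb{P}=\mathrm{Coll}(\omega_1,<\kappa)$, so that GCH, $\diamondsuit(\omega_1)$, and the nice club ladder system $\bar{\nu}$ of Section~\ref{wellchosen} are all available, and fix any stationary co-stationary $T\subseteq\omega_1$; then $\diamondsuit(T)$ holds, being a consequence of $\diamondsuit(\omega_1)$. Put $\bar{\eta}=\bar{\nu}\restriction T^c$, so $\bar{\eta}$ is indexed by $T^c$ with respect to $\bar{\nu}$; since $T^c$ is stationary in $\omega_1$ and each $\nu_\delta$ enumerates a club of $\delta$ continuously, each $\eta_\delta=\nu_\delta[T^c]$ is a stationary subset of $\delta$, so $\bar{\eta}$ is a stationary ladder system, as wanted. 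Now Theorem~\ref{Shelah} applies with its ``$T$'' instantiated as $T^c$: the hypotheses there become that $T^c$ is uncountable co-stationary (true, since $(T^c)^c=T$ is stationary) and that $\diamondsuit((T^c)^c)=\diamondsuit(T)$ holds. Hence the $<\kappa$-support iteration $P_{\kappa^+}$ of $2$-uniformization forcings with respect to $\bar{\eta}$ has a dense set of flat conditions, is $\omega_1$-distributive and $\kappa^+$-c.c., and with the usual book-keeping forces $\mathrm{Unif}_2(\bar{\eta})$; GCH in $V^{\mathbb{P}*P_{\kappa^+}}$ is verified exactly as in Theorem~\ref{main}. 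Only the third clause --- that $\omega_2$ is generically supercompact via some $T$-closed forcing --- requires new work.

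For that I would repeat Section~\ref{non-stationary}, making one systematic change: the partition of $\omega_1$ into successors and limits that organizes the tree of generics is replaced by the partition into $T^c$ and $T$. Fixing $j: V\to M$ that is $\theta$-supercompact for a sufficiently large $\theta\geq\lambda$ and lifting $j$ through $\mathbb{P}$ and through $\mathrm{Coll}(\omega_1,[\kappa,<j(\kappa)))$, one has $j^*(\bar{\eta})(\kappa)=\{\delta_\zeta:\zeta\in T^c\}$, so the tree $\{Q^\xi:\xi<\omega_1\}$ branches precisely at stages $\xi\in T^c$ (where $\delta_\xi\in j^*(\bar{\eta})(\kappa)$, i.e.\ $T_{\xi+1}\supsetneq T_\xi$), while the stages $\xi\in T$ are exactly those with $T_{\xi+1}=T_\xi$, at which the genericity step (meeting the dense open subsets of $P_{\kappa^+}$ coded by $F_1$) and the diamond step may be carried out. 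Accordingly, one copies to the club $\{\delta_\xi:\xi<\omega_1\}$ a $\diamondsuit(T)$-sequence available in the extension by $\mathrm{Coll}(\omega_1,[\kappa,<j(\kappa)))$ --- diamond sequences survive this countably closed forcing, as recalled in Section~\ref{non-stationary} --- obtaining $\langle t_\xi:\xi\in T\rangle$ that guesses, at stationarily many $\xi\in T$, any prospective sequence of countable initial segments of the ladder colorings at the $\kappa$-th coordinate of $j^*(P_{\kappa^+})$. This replaces the use of $\diamondsuit(\omega_1)$ in Section~\ref{non-stationary}, which is now insufficient precisely because the genericity stages only form a stationary set, not a club. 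The coherence clauses for $\{Q^\xi:\xi<\omega_1\}$ (in particular requirement~(\ref{four})), the construction of the master condition $r\in j^*(P_{\kappa^+})$, and the verification of the construction invariant in Claim~\ref{Genericity+Condition} then go through mutatis mutandis, Shelah's distributivity machinery being indifferent to which co-stationary subset of $\omega_1$ indexes $\bar{\eta}$; the only new phenomenon to check is that at a limit ordinal $\xi\in T^c$ the recursion takes a greatest lower bound at stage $\xi$ and then branches at stage $\xi+1$, which is just the situation already handled for limit ordinals of the branching set in the general framework of Shelah's proof of Theorem~\ref{Shelah}.

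The weakening becomes visible in the analogue of Claim~\ref{countablyclosed}: the quotient $R/H$, where $R=\mathrm{Coll}(\omega_1,[\kappa,<j(\kappa)))$, is now only $T$-closed (Definition~\ref{T-closed}), not countably closed. Mirroring that proof, let $R'\subseteq R$ be the dense set of conditions deciding an initial segment of $F$, of $\langle N_\xi\rangle$, and of $H$, and let $N\prec H(\lambda)^{V^{\mathbb{P}*P_{\kappa^+}}}$ be countable with $R'$ and the relevant parameters in $N$ and with $\gamma:=N\cap\omega_1\in T$. If $\langle p_i:i<\omega\rangle$ is an $N$-generic decreasing sequence in $R'/H$, then $p:=\bigcup_i p_i$ has $\mathrm{dom}(p(\mu))=\gamma$ by genericity; since $\gamma\in T$ is a \emph{genericity} stage of the tree, the argument of Claim~\ref{countablyclosed} applies verbatim --- setting $p^*=p\cup\{(\mu,\gamma,d^{-1}(t))\}$, which forces $F_1(\gamma)=t$, one uses that any $t\in H$ must extend the greatest lower bound $q^{\gamma}_{s^*}$ of the branch $H\restriction\gamma$ through the tree, hence is itself extended by $q^{\gamma+1}_{s^*}$ --- and concludes that $p\in R'/H$ is a lower bound. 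For $N$ with $N\cap\omega_1\in T^c$ the argument genuinely fails, since $\gamma$ is then a branching rather than a genericity stage; the quotient is exactly $T$-closed. Consequently, as in Theorem~\ref{main}, in $V^{\mathbb{P}*P_{\kappa^+}}$ for every $\lambda$ there is a $T$-closed forcing (the direct analogue of the countably closed forcing used there) whose generic extension carries an elementary embedding $j^+: V^{\mathbb{P}*P_{\kappa^+}}\to N$ with $\mathrm{crit}(j^+)=\kappa=\omega_2$, $j^+(\kappa)>\theta\geq\lambda$, and ${j^+}''\theta\in N$.

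The main obstacle is not a single hard lemma but the careful transfer of the successors-versus-limits bookkeeping of Section~\ref{non-stationary} to the $T^c$-versus-$T$ bookkeeping: one must confirm that branching and taking greatest lower bounds may coincide at limit stages lying in $T^c$ without spoiling tree coherence, that $\diamondsuit(T)$ rather than $\diamondsuit(\omega_1)$ still catches the ``correct branch'' at stationarily many genericity stages so that the master condition lies below ${j^*}''H$ and $H$ meets every ground-model dense set, and --- the most delicate point --- that the resulting collapse quotient is \emph{exactly} $T$-closed: no better, since an $N$-generic $\omega$-sequence with $N\cap\omega_1\in T^c$ cannot in general be bounded (in line with Shelah's ZFC non-uniformization theorem for club ladder systems, towards which the co-stationarity of $T$ is pushing), and no worse, by the argument just sketched. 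This is the precise sense in which, relative to Theorem~\ref{main}, the reflection principle is weakened --- generic supercompactness now only via a $T$-closed, rather than countably closed, forcing --- while the uniformization property is strengthened, holding now of a stationary ladder system.
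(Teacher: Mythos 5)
Your proposal follows essentially the same route as the paper's proof: force with $\mathbb{P}*P_{\kappa^+}$ where $\bar{\eta}=\bar{\nu}\restriction T^c$, relativize the tree of generics so that branching happens at stages in $T^c$ and the genericity/diamond steps at stages in $T$, use a $\diamondsuit(T)$-sequence to catch the correct branch, and show the collapse quotient is $T$-closed by running the Claim~\ref{countablyclosed} argument for models $N$ with $N\cap\omega_1\in T$. The one slip is your justification of $\diamondsuit(T)$: it is \emph{not} a consequence of $\diamondsuit(\omega_1)$ for an arbitrary stationary $T$ (this implication is consistently false); the paper instead simply assumes $\diamondsuit(T)$ in the ground model, forcing it first if necessary, and it survives the countably closed collapse --- an easy and complete fix. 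Your additional remark that the quotient is ``exactly'' $T$-closed and no better is neither proved nor needed for the statement.
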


\begin{proof}
Since the proof is similar to that in Section \ref{non-stationary}, we only indicate the places requiring modifications. Fix some stationary co-stationary $T\subset acc(\omega_1)$ and assume $\diamondsuit(T)$ holds in the ground model (force $\diamondsuit(T)$ if we need to). We will keep the same notations as in Section \ref{non-stationary}.

Recall the universe we work in is $V[G]$ where $G\subset \mathrm{Coll}(\omega_1, <\kappa)$ is generic over $V$.
Let $\bar{\nu}$ be the nice club ladder system as defined in Section \ref{wellchosen}. The ladder system to force the 2-uniformization property on will be $\bar{\eta}=\bar{\nu}\restriction T^c$. Hence, we can define $P_{\kappa^+}$ as before with respect to $\bar{\eta}$.

In the first stage to construct the tree of generics, the modifications come from relativizing the definition and the construction of $\{Q^{\xi}:\xi<\omega_1\}$ to $T$. More precisely, at (\ref{six}), ``for each limit ordinal $\xi<\omega_1 \cdots$'' is changed to ``for each limit ordinal $\xi\in T \cdots$''. Accordingly, instead of copying a $\diamondsuit(\omega_1)$-sequence, we copy a $\diamondsuit(T)$-sequence. During the construction of $\{Q^\xi: \xi<\omega_1\}$, we do the following: 
\begin{itemize}
\item at limit stages, we take the inverse limit like before;
\item at successor stage $\beta+1$, where $\beta\in T^c$, we do exactly what we did in Section \ref{non-stationary} in the case of successor of successor ordinals;
\item at successor stage $\beta+1$, where $\beta\in T$, we do exactly what we did in Section \ref{non-stationary} in the case of successor of limit ordinals.
\end{itemize}
The rest of the proof carries over.

In the second stage to build a master condition, we need to show: 
\begin{claim}
$R/H=_{def}\{l\in R: \forall p\in P, l\Vdash p\in \dot{H} \rightarrow p\in H, l\Vdash p\not\in \dot{H}\rightarrow p\not\in H\}$ has a dense subset that is $T$-closed in $V[G*H]$.
\end{claim}

\begin{proof}
Recall $d: \kappa^{++} \leftrightarrow P_{\kappa^+}\cup \{D\subset P_{\kappa^+}: D\text{ is dense open}\}$ is a bijection lying in $V[G]$. Let $R'$ be a dense subset of $R$ consisting of conditions $l$, such that $\mu=_{def} (\kappa^{+++})^V\in dom(l)$ and $l$ decides $F\restriction dom(l(\mu))$, $\langle N_\xi: \xi<dom(l(\mu))\rangle$ and $H\restriction dom(l(\mu))$. We claim that $R'/H$ is $T$-closed in $V[G*H]$. 

Suppose not, then let $t\in H$ and a decreasing sequence $\langle p_i\in R'/H: i<\omega\rangle$ be such that $t\Vdash ``\langle p_i\in R'/H: i<\omega\rangle$ is a generic sequence for some $N\prec H(\theta)$ with $N\cap \omega_1\in T$ and $ p=\bigcup_{i\in \omega} p_i\not\in R'/H$''. Note that $dom(p(\mu))\in T$. Extending $t$ further if necessary, we may assume  $p\Vdash t\not\in \dot{H}$.

By the genericity of $\langle p_i: i\in \omega\rangle$ over $N$, we know that $\eta=_{def} dom(p(\mu))\in T$. Let $p^*=p\cup \{(\mu, \eta, d^{-1}(t))\}$. Exactly arguing as in Claim \ref{countablyclosed}, we know that $p^*$ must force that $t\in \dot{H}$, which is a contradiction.\end{proof}
The rest of the proof is the same as in Section \ref{non-stationary}.
\end{proof}

We briefly discuss the strength of the fact that $\omega_2$ is generically supercompact via some $T$-closed forcing for some stationary $T\subset \omega_1$. Many of the following are well-known and we include some proofs for completeness.

\begin{lemma}\label{GenericSupercompactnessTpattern}
If $\omega_2$ is generically supercompact via some $T$-closed forcing where $T\subset acc(\omega_1)$ is stationary, then for any regular cardinal $\lambda\geq \omega_2$, any stationary subset of $\lambda\cap \mathrm{cof}(\omega)$ reflects to an ordinal of cofinality $\omega_1$ with pattern $T$.
\end{lemma}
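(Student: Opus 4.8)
The plan is to prove Lemma~\ref{GenericSupercompactnessTpattern} by a standard generic-elementary-embedding argument, being careful that the auxiliary forcing does not add new $\omega$-sequences of ordinals while possibly adding new $\omega_1$-sequences, which is exactly where $T$-closedness enters. Fix a regular $\lambda\geq\omega_2$ and a stationary $S\subset\lambda\cap\mathrm{cof}(\omega)$; we must produce $\delta<\lambda$ with $\mathrm{cf}(\delta)=\omega_1$ and a club ladder $\nu_\delta$ such that $S\cap\nu_\delta[T]$ is stationary in $\delta$. Actually, since the statement of $\mathrm{Refl}_T$ quantifies over all club ladder systems, what we really want is: for \emph{every} club ladder system $\langle\nu_\alpha:\alpha\in S^2_1\rangle$ (more generally indexed on $\lambda\cap\mathrm{cof}(\omega_1)$) there is such a $\delta$; so I would fix an arbitrary such ladder system $\bar\nu$ at the outset, together with $S$, and run the reflection argument on the pair $(\bar\nu,S)$.

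First I would invoke the hypothesis with $\mu$ a cardinal large enough that $\lambda<\mu$ and everything relevant (namely $S$ and $\bar\nu$, which can be coded as a subset of $\lambda$) lies in $H(\mu)$: there is a $T$-closed forcing $R$ such that in $V[G_R]$ there is $j:V\to M$ with $\mathrm{crit}(j)=\omega_2$, $j(\omega_2)>\mu$, and $j''\mu\in M$. Set $\delta:=\sup j''\lambda$. Since $\mathrm{cf}^V(\lambda)=\lambda\geq\omega_2>\mathrm{crit}(j)$ and $R$ is $T$-closed hence (in particular) does not collapse $\omega_1$, one checks $\mathrm{cf}^{V[G_R]}(\delta)=\omega_1$: indeed $j''\omega_1$ is cofinal in $j(\omega_1)=\omega_1^{V[G_R]}$... more carefully, $\delta<j(\lambda)$ and $\delta$ has cofinality $j(\omega_1)$ computed in $M$ because $\lambda$ is regular; since $\omega_2$ is the critical point, $j(\omega_1)=\omega_1$, and $\mathrm{cf}^{V[G_R]}(\delta)=\mathrm{cf}^{M}(\delta)=\omega_1$ using that $M$ is closed enough under $\mu$-sequences to compute this cofinality correctly. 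Because $j''\mu\in M$ and in particular $j''\lambda\in M$, the set $j''\lambda$ is a club subset of $\delta$ (it is the image of a continuous cofinal map on the regular $\lambda$), living in $M$; and its order type is $\lambda$.

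The heart of the argument is the reflection computation inside $M[G_R]$ applied to $j(\bar\nu)$ and $j(S)$ at the point $\delta$. I would argue that $j(S)\cap\delta=j''S$ (standard, using $\mathrm{crit}(j)=\omega_2>\mathrm{cf}(\alpha)=\omega$ for $\alpha\in S$, so $j(\alpha)=\sup j''\alpha$ need not equal $\alpha$ — rather one shows $\alpha\in S\Leftrightarrow j(\alpha)\in j(S)$ and $\sup j''\alpha<\delta$, giving $j''S\subseteq j(S)\cap\delta$, and conversely any element of $j(S)\cap\delta$ below $\sup j''\lambda$ is of the form $j(\alpha)$ for $\alpha<\lambda$ — the cofinality-$\omega$ points below $\delta$ that are in the range of $j$). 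Then the key point is that $j''S$ is stationary in $\delta$ in $V[G_R]$: given a club $C\subseteq\delta$ in $V[G_R]$, since $R$ is $T$-closed and $\mathrm{cf}(\delta)=\omega_1$, I would build an internally-approachable-style continuous $\subseteq$-increasing chain of elementary submodels of some $H(\chi)^{V[G_R]}$ of size $<\omega_1$... but these are countable models; so the chain $\langle N_\xi:\xi<\omega_1\rangle$ of countable submodels with $N_\xi\cap\omega_1\in T$ at a club of $\xi$'s — use $T$-closedness to get lower bounds and hence that $\sup(N_\xi\cap\lambda)$ is hit, and stationarily many such suprema land in $S$ because $S$ was stationary in $V$ and $R$ adds no new $\omega$-sequences (from $T$-closedness at countable models, the usual argument that $T$-closed forcing is $\omega^\omega$-bounding / preserves stationarity of subsets of $\mathrm{cof}(\omega)$). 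Pulling back through $j$: $S$ stationary in $\lambda$ in $V$ implies (by elementarity, or directly) $j''S$, equivalently $j(S)\cap\delta$, is stationary in $\delta$; and $j''S\cap j(\bar\nu)_\delta[j''T]$ — here I claim $j(\bar\nu)_\delta$, the $\delta$-th ladder of $j(\bar\nu)$, when restricted via $j''T$, picks out exactly $\{j(\nu_\alpha(i)):i\in T\}$ for the relevant enumeration — is stationary, because $j''S$ is ``spread out'' along $j''\lambda$ in the same pattern $S$ had along $\lambda$. Then by elementarity of $j$ back down to $V$ (or rather: the statement ``$\exists\delta'\in\lambda\cap\mathrm{cof}(\omega_1)$ with $S\cap\nu_{\delta'}[T]$ stationary'' is $\Sigma_1$ over $H(\mu)$ with parameters $S,\bar\nu,T$, and it holds in $M[G_R]$ at $\delta$, hence holds in $V[G_R]$, hence by $T$-closedness — which is $\omega_1$-preserving and adds no reals — the witnessing $\delta'$ and the stationary set are already in $V$) we conclude $\mathrm{Refl}_T(S^2_0)$-style reflection of $S$ in $V$.

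The main obstacle I anticipate is the cofinality/stationarity bookkeeping in the auxiliary extension $V[G_R]$: one must be sure that (a) $R$ preserves $\omega_1$ and computes $\mathrm{cf}(\delta)=\omega_1$ correctly, (b) $R$ preserves the stationarity of the \emph{given} $S\subseteq\mathrm{cof}(\omega)$ and more delicately the stationarity of $j''S$ \emph{within the pattern} $j''T$ — this last is where $T$-closedness is genuinely needed, since an arbitrary $\omega_1$-preserving forcing could kill stationary subsets of $\mathrm{cof}(\omega)$, but a $T$-closed forcing (being ``proper-like'' at models with $N\cap\omega_1\in T$) cannot — and (c) the reflection statement, once true in $V[G_R]$, reflects back to $V$ because $R$ adds no new reals and no new subsets of $\omega_1$ that matter (or, more cheaply, because the existence of a reflecting $\delta'$ is absolute between $V$ and $V[G_R]$ as it concerns only objects in $H(\mu)^V$ which $R$ does not disturb in the relevant way). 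I would isolate ``$T$-closed forcing preserves stationarity of every stationary $S\subseteq\mathrm{cof}(\omega)$ in every regular $\lambda\geq\omega_2$, even in the pattern sense'' as a preliminary sublemma and prove it by the model-chain argument sketched above, then the rest of Lemma~\ref{GenericSupercompactnessTpattern} is the routine lift-and-reflect.
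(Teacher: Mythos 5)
Your overall framework --- fix $\bar\nu$ and $S$, pass to a generic embedding $j$ obtained from a $T$-closed forcing, and try to reflect at $\eta=\sup j''\lambda$ --- is the same as the paper's, and you correctly sense that $T$-closedness must enter through countable models $N$ with $N\cap\omega_1\in T$. But there is a genuine gap at the crux, namely your claim that $j''S\cap j(\bar\nu)_\eta[T]$ is stationary ``because $j''S$ is spread out along $j''\lambda$ in the same pattern $S$ had along $\lambda$.'' The ladder $d:=j(\bar\nu)(\eta)$ is \emph{not} the pointwise $j$-image of any ladder from $V$: since $\lambda$ is regular and $\geq\mathrm{crit}(j)$, the ordinal $\eta=\sup j''\lambda$ is not of the form $j(\alpha)$, so $d$ is a brand-new club in $\eta$ chosen by $M$ (i.e.\ by the generic), and a priori it has nothing to do with $j''\lambda$, let alone with $j''S$. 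Your parenthetical claim that $j(\bar\nu)_\eta[T]$ ``picks out exactly $\{j(\nu_\alpha(i)):i\in T\}$'' rests on this misconception. Consequently, proving that $j''S$ is stationary in $\eta$ (which your model-chain argument would deliver) is not enough; one must show that $j''S$ meets $d[T]$ inside every club of $M$, and that requires controlling \emph{where the $T$-indexed points of the generic ladder $d$ land}.

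The missing idea is a density argument that ties all three objects together through a single countable model. Given a condition $p$ and names $\dot d$, $\dot e$ for the ladder at $\eta$ and for a club in $\eta$, choose one countable $N\prec H(\theta)$ containing everything relevant with \emph{both} $\delta:=N\cap\omega_1\in T$ \emph{and} $\gamma:=\sup(N\cap\lambda)\in S$; such $N$ exist precisely because $S$ and $T$ are both stationary. Build an $N$-generic decreasing sequence $\langle p_i\rangle$ interleaving decisions so that for every $\gamma'<\gamma$ some $p_i$ forces $\dot d(\delta')>j(\gamma')$ for some $\delta'<\delta$ and $j(\gamma'')>\min(\dot e - \dot d(\delta'))$ for some $\gamma''\in(\gamma',\gamma)$. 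Because $\delta\in T$, $T$-closedness yields a lower bound $p^*$, and then continuity of $\dot d$ at the limit ordinal $\delta\in acc(\omega_1)$ together with continuity of $j$ at the cofinality-$\omega$ ordinal $\gamma$ forces $\dot d(\delta)=\sup_{\delta'<\delta}\dot d(\delta')=\sup_{\gamma''<\gamma}j(\gamma'')=j(\gamma)\in\dot e\cap j''S\cap\dot d[T]$. This single step is where the stationarity of $S$, the stationarity of $T$, and the hypothesis $T\subseteq acc(\omega_1)$ are all consumed, and it is exactly what your sketch leaves unproved. (Two smaller slips: $j(S)\cap\eta$ need not equal $j''S$ --- only the inclusion $j''S\subseteq j(S)$ is needed --- and $j''\lambda$ is not closed in $\eta$ at its limit points of cofinality $\geq\omega_2$, though it is closed at points of smaller cofinality, which is all the argument uses.)
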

\begin{proof}
Let a club ladder system $\bar{\nu}=\langle \nu_\delta: \delta\in \lambda\cap \mathrm{cof}(\omega_1)\rangle$ and a stationary $S\subset \lambda\cap \mathrm{cof}(\omega)$ be given. Let $P$ be a $T$-closed forcing such that in $V[G]$ where $G\subset P$ is generic over $V$, there exists an elementary embedding $j: V\to M$ with critical point $\kappa=\omega_2^V$, $j(\kappa)>\lambda$ and $j''\lambda\in M$. Let $\eta=\sup j''\lambda$. Then $M\models \mathrm{cf}(\eta)=\mathrm{cf}(\lambda)=\omega_1$. Let $d=j(\bar{\nu})(\eta)$. Note that $d''T$ is a stationary subset of $\eta$.
We will argue that in $M$, for any club $e\subset \eta$, $j'' S\cap e\cap d''T\neq \emptyset$. With this granted, we can finish the proof as follows: in $M$, $j'' S \cap d'' T$ is a stationary subset of $\eta$, which implies $j(S) \cap d'' T \supset j'' S\cap d'' T$ is a stationary subset of $\eta$. Apply the elementarity of $j$ to get the desired conclusion.
Let the respective $P$-names for the objects defined above be $\dot{j}, \dot{d}, \dot{e}$.

Given $p\in P$, find a countable $N\prec H(\theta)$ for some sufficiently large regular $\theta$ such that $N$ contains $j, \dot{e}, \dot{d}, P, S, \bar{c}$ such that $\delta=N\cap \omega_1\in T$ and $\gamma=\sup N\cap \lambda\in S$.
Let $\bar{p}= \langle p_i: i\in \omega\rangle$ be a decreasing generic sequence for $N$ with $p_0=p$. We observe that: for any $\gamma'<\gamma$, there exists $\delta'<\delta$, $\gamma''\in (\gamma', \gamma)$ and $i\in \omega$ such that $p_i \Vdash ``\dot{d}(\delta')>j(\gamma') \wedge j(\gamma'') >  \min \dot{e} - \dot{d}(\delta')$''. Let $p^*$ be a lower bound for $\langle p_i: i\in \omega\rangle$, then $p^*\Vdash ``\dot{d}(\delta)=\sup_{\delta'<\delta} \dot{d}(\delta')=\sup_{\gamma''<\gamma} j(\gamma'')=j(\gamma)\in \dot{e}$'', noting that $j$ is forced to be continuous at ordinals of countable cofinality and $\dot{d}$ is forced to be continuous. Therefore, $p^*\Vdash j(\gamma)\in j''S\cap \dot{e}\cap \dot{d}''T$.\end{proof}

\begin{definition}
For any cardinal $\lambda\geq \omega_2$, we say the \emph{Weak Reflection Principle} holds at $\lambda$, or $\mathrm{WRP}(\lambda)$ holds, if any stationary subset $S\subset [\lambda]^\omega$ \emph{reflects}, namely, there exists $W\in [\lambda]^{\aleph_1}$ containing $\omega_1$ such that $S\cap [W]^{\omega}$ is stationary in $[W]^\omega$. Given a stationary $S\subset [\lambda]^\omega$, we use $\mathrm{WRP}(S)$ to abbreviate the assertion that any stationary subset of $S$ reflects.
\end{definition}

\begin{remark}
$\mathrm{WRP}$, namely that ``$\mathrm{WRP}(\lambda)$ holds for all $\lambda$'', has high consistency strength. For example, by \cite[Theorem 3.8]{MR1174395}, it implies $\square(\lambda)$ fails for all regular cardinal $\lambda \geq \omega_2$. By \cite{MR2499432}, this in turn implies the existence of an inner model with a proper class of strong cardinals and a proper class of Woodin cardinals. 
\end{remark}

The following are well-known and proofs can be found in \cite{MR924672}. Let $T\subset \omega_1$ be a stationary set.

\begin{theorem}[\cite{MR924672}]
If $\omega_2$ is generically supercompact via some $T$-closed forcing, then for any cardinal $\lambda\geq \omega_2$, $\mathrm{WRP}(\{x\in [\lambda]^\omega: x\cap \omega_1\in T\})$ holds. In particular, $\mathrm{NS}_{\omega_1}\restriction T$ is presaturated.
\end{theorem}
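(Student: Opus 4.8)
The plan is to prove that if $\omega_2$ is generically supercompact via some $T$-closed forcing, then $\mathrm{WRP}(\{x\in[\lambda]^\omega : x\cap\omega_1\in T\})$ holds, and then deduce presaturation of $\mathrm{NS}_{\omega_1}\restriction T$ as a standard consequence. The first part is a direct generic-embedding reflection argument in the style of Lemma \ref{GenericSupercompactnessTpattern}. Fix a stationary $S\subseteq \{x\in[\lambda]^\omega : x\cap\omega_1\in T\}$ and suppose toward a contradiction it has a stationary subset $S'$ with no reflection point; pick a $T$-closed forcing $P$ and, in $V[G]$, an elementary embedding $j\colon V\to M$ with critical point $\kappa=\omega_2^V$, $j(\kappa)>\lambda$, and $j''\lambda\in M$. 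Then $W := j''\lambda \in M$ is a set of size $\aleph_1^M$ (since $j''\lambda$ has $V$-cardinality $\lambda < j(\kappa)$, which $M$ computes as $\le\aleph_1$) containing $j''\omega_1 = \omega_1$, and one argues that $j(S)\cap[W]^\omega$ is stationary in $[W]^\omega$ from the point of view of $M$. By elementarity this yields a reflection point for $S$ in $V$; the real content is showing the stationarity, and this is where $T$-closedness enters.

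First I would set up the stationarity argument. In $M$, let $\dot e$ name a club on $[W]^\omega$ (equivalently, via the bijection $W\cong j''\lambda$, a club on $[\lambda]^\omega$ pulled along $j$); I want to find $a\in j''S \cap \dot e$ with $a\cap\omega_1\in T$ — note that for $a = j''x$ with $x\in S$ we have $a\cap\omega_1 = x\cap\omega_1\in T$ automatically since $S$ lives below the $T$-pattern and $\mathrm{crit}(j)=\omega_2$, so the pattern condition is free. So it suffices to show $j''S$ meets every club of $[W]^\omega$ in $M$. Back in $V$, given a condition $p\in P$, build a countable $N\prec H(\theta)$ containing $P$, $j$ (as a name), $\dot e$, $S$, $\lambda$, with $\delta := N\cap\omega_1\in T$ and $x := N\cap\lambda\in S$ (possible since $S$ is stationary and $T$ is stationary; one interleaves the construction of $N$ to hit both requirements — this uses that $S\subseteq\{x : x\cap\omega_1\in T\}$ so the two demands are consistent). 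Take an $N$-generic decreasing sequence $\langle p_i : i<\omega\rangle$ below $p$, and use $T$-closedness (applicable precisely because $N\cap\omega_1=\delta\in T$) to get a lower bound $p^*$. Genericity of the sequence forces $\dot e$ to be interpreted so that $j''x\in\dot e$ — the standard argument that the interpretation of a club along $j$ contains $j''(N\cap\lambda)$ when $N\cap\lambda$ is built generically — and $j''x\in j''S$ since $x\in S$. So $p^*\Vdash j''x\in j''S\cap\dot e$, and since $p$ was arbitrary, $M\models$ "$j''S$ is stationary in $[W]^\omega$," hence $j(S)\supseteq j''S$ is stationary in $[W]^\omega$ in $M$. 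Elementarity of $j$ then gives a witness $W_0\in[\lambda]^{\aleph_1}$ with $\omega_1\subseteq W_0$ and $S\cap[W_0]^\omega$ stationary, contradicting the choice of $S'$ (applied to $S'$ in place of $S$).

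For the "in particular" clause, I would invoke the standard equivalence (due to Feng–Magidor–Woodin-style arguments, as in \cite{MR924672}): $\mathrm{WRP}(\{x\in[\omega_2]^\omega : x\cap\omega_1\in T\})$ — the case $\lambda=\omega_2$ — implies $\mathrm{NS}_{\omega_1}\restriction T$ is presaturated. Concretely, one shows the canonical projection of the $T$-relativized reflection to $\omega_1$ gives that for any maximal antichain in $P(\omega_1)/(\mathrm{NS}_{\omega_1}\restriction T)$, below any condition there is a refinement of size $\le\aleph_1$; combined with precipitousness (which also follows from the generic embedding) this yields presaturation. Since the statement says these facts are well-known with proofs in \cite{MR924672}, I would keep this part brief, citing that reference for the reduction of presaturation to the weak reflection principle.

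The main obstacle is the simultaneous genericity demand on $N$: one needs $N\cap\omega_1\in T$ and $N\cap\lambda\in S$ at once, where $S$ is only assumed stationary in $\{x:x\cap\omega_1\in T\}$. The point is that any $x\in S$ already satisfies $x\cap\omega_1\in T$, so a recursive construction of $\langle N_\xi\rangle$ whose union $N$ has $N\cap\lambda\in S$ will automatically have $N\cap\omega_1\in T$ — provided $S$ is stationary in $[\lambda]^\omega$ (not merely in $[\lambda]^\omega$ restricted to the $T$-pattern, but stationarity there is exactly what we have since such restricted stationary sets are stationary in $[\lambda]^\omega$). Getting this bookkeeping precisely right, and verifying that $N$-genericity of $\langle p_i\rangle$ really forces $j''(N\cap\lambda)$ into the interpreted club $\dot e$ — which requires that $N$ was chosen to reflect enough of the structure of $P$ and $\dot e$ — is the delicate part; the rest is routine lifting and elementarity.
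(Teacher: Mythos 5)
Your argument is correct, and it is essentially the same generic-embedding technique the paper itself uses to prove Lemma \ref{GenericSupercompactnessTpattern}: reflect to $W=j''\lambda$, and verify stationarity of $j''S\cap[W]^\omega$ by a density argument over a countable $N\prec H(\theta)$ with $N\cap\lambda\in S$ (hence $N\cap\omega_1\in T$, which is exactly what licenses the use of $T$-closedness to find a lower bound for the $N$-generic sequence). The paper gives no proof of this particular theorem, deferring to \cite{MR924672}, and your deferral of the presaturation clause to that reference matches what the paper does.
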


To summarize, relative to the existence of large cardinals, it is consistent with GCH that there exists a stationary co-stationary set $T\subset \omega_1$ such that $\mathrm{Refl}_T(S^2_0)$, $\mathrm{NS}_{\omega_1}\restriction T$ is presaturated, and there exists a ladder system $\bar{\eta}$ indexed by $T^c$ such that $\mathrm{Unif}_2(\bar{\eta})$ holds, which implies $\neg \diamondsuit(S^2_1)$. This provides a contrast to Theorem \ref{ShelahStarting}.

\subsection{Variation II: constant ladder colorings}

As we remarked before, a theorem of Shelah \cite{MR1623206} asserts it is impossible to get the 2-uniformization property on a club ladder system. However, if we only restrict to constant colorings, then it is indeed possible.

\begin{definition}
Given a ladder system $\bar{\eta}$ on $S^2_1$, we say the \emph{m-uniformization property} holds for $\bar{\eta}$ (abbreviated as $\mathrm{Unif}_m(\bar{\eta})$), if for any \emph{constant} ladder coloring $\bar{c}$ on $\bar{\eta}$, there exists a uniformizing function for $\bar{c}$.
\end{definition}

The ``m'' above stands for ``monochromatic''. Note that if there exists some ladder $\bar{\eta}$ on $S^2_1$ such that $\mathrm{Unif}_m(\bar{\eta})$ holds, then $\neg \diamondsuit(S^2_1)$.

Almost the same proof as that from Theorem \ref{Shelah} will give (the modifications can also be read off from the proof of Theorem \ref{constant}): 
\begin{theorem}[Shelah]
Suppose $\bar{\eta}$ is a nice club ladder system and $\diamondsuit(\omega_1)$ holds. Let $\langle P_\gamma, \dot{Q}_\beta: \gamma\leq \kappa^+, \beta<\kappa^+\rangle$ be a $<\kappa$-support iteration of m-uniformization forcings with respect to $\bar{\eta}$.
Then the following hold: 
\begin{enumerate}
\item the set of flat conditions is dense in $P_{\kappa^+}$ and
\item $P_{\kappa^+}$ is $\omega_1$-distributive.
\end{enumerate}
\end{theorem}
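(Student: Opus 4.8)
The plan is to mimic the proof sketch of Theorem \ref{Shelah} almost verbatim, checking that the restriction to constant ladder colorings does not break any of the three ingredients used there: $\omega_1$-distributivity of a single $m$-uniformization forcing (built on a club ladder system), the tree construction with diamond-guessing to handle the two-step iteration, and the limit-stage bookkeeping. First I would treat a single $m$-uniformization forcing $P_{\bar c}$ where $\bar c$ is a constant coloring on a nice club ladder system $\bar\eta$: exactly as in the distributivity argument for Theorem \ref{Shelah}, build a continuous $\subseteq$-chain $\langle N_\xi : \xi<\omega_1\rangle$ of models of size $\aleph_1$ with $N_{\xi+1}$ countably closed, set $\delta_\xi = N_\xi\cap\omega_2$, $\delta=\sup_\xi\delta_\xi$, and recursively build $\langle p_\xi\rangle$ with $p_{\xi+1}\in N_{\xi+1}$ deciding $\dot f\restriction\xi$ and $p_\xi$ obeying $c_\delta$, i.e. $p_\xi\supseteq c_\delta\restriction\delta_\xi$. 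The only point to note is that since $c_\delta$ is constant (say with value $\varepsilon_\delta$), ``obeying $c_\delta\restriction\delta_\xi$'' just means $p_\xi$ is constant with value $\varepsilon_\delta$ on $\eta_\delta\cap\delta_\xi$; this is still possible to arrange inside $N_{\xi+1}$ (which knows $\eta_\delta\cap[\delta_\xi,\delta_{\xi+1})$ by countable closure), so $p_\infty=\bigcup p_\xi\in P_{\bar c}$ and it forces $\dot f\in V$. Density of flat conditions for a single step is immediate from distributivity as in the original argument.

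Next I would do the two-step iteration $P_{\bar c^0}*P_{\dot{\bar c}^1}$ following the tree construction in the sketch of Theorem \ref{Shelah}. Fix $\langle N_\xi\rangle$ as there so that $\nu_\delta=\{\delta_\xi:\xi<\omega_1\}$; because we are now on a \emph{club} ladder system rather than one indexed by $T$, we have $\eta_\delta=\nu_\delta=\{\delta_\xi:\xi<\omega_1\}$ outright, and we use $\diamondsuit(\omega_1)$ (copied to this club) in place of $\diamondsuit(T^c)$ to get a guessing sequence $\langle t_\xi\rangle$ for the initial segments of the generic coloring $\dot c^1_\delta\restriction\delta_\xi$. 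Let $T_\xi$ be the possible $t:\eta_\delta\cap\delta_\xi\to 2$ as before — note we do \emph{not} restrict $T_\xi$ to constant functions, since an initial segment of the uniformizing function being built need not be constant even though the target coloring is; the constancy constraint enters only through ``$q^\xi_t(i)$ obeys the constant coloring $c^i_\delta$''. Then build $\{q^\xi_t\}$ exactly as in the sketch: greatest lower bounds at limits, canonical $\triangleleft$-least extensions at successor stages in $T$ (here: all limit $\xi$, since on a club ladder every point of $\nu_\delta$ is genuinely in $\eta_\delta$, so actually the roles of ``$\xi\in T$'' and ``$\xi\in T^c$'' get replaced by ``every $\xi$ is a coloring-point'' — the construction simplifies), interleaving the decisions about $\dot f$ at successor stages, and finally read off $p^*$, extend to $p'$ deciding $c^1_\delta$ and the relevant generic name, and use the diamond-guessing to land below $q^{\xi+1}_{t_\xi}$ for stationarily many $\xi$.

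Finally, the limit case of the iteration: as in the last paragraph of the sketch of Theorem \ref{Shelah}, since the support has size $\le\aleph_1$ we can use the single $\diamondsuit(\omega_1)$-sequence (copied to $\nu_\delta$) to guess all the $\le\aleph_1$-many relevant colorings $\langle c^\beta_\delta : \beta\in\mathrm{support}\rangle$ simultaneously, and then run the tree construction coordinate-by-coordinate exactly as in Section \ref{non-stationary} (with the coherence requirement (\ref{four}) maintained as in Claim \ref{cohere1}), each coordinate obeying its constant coloring. Density of flat conditions follows from $\omega_1$-distributivity together with the structure of the iteration just as in Theorem \ref{Shelah}. The main obstacle I anticipate is purely bookkeeping/verification: confirming that nothing in the original argument secretly relied on $\bar\eta$ being non-club or on the coloring being non-constant — in particular that at the successor stages where one extends $q^\xi_t(\beta)$ to obey the constant coloring $c^\beta_\delta$ while also obeying the prescribed $t(\beta)$, the two demands remain compatible (they are, since $t(\beta)$ was chosen to be an initial segment consistent with the generic coloring, which includes its constant tail behavior at $\delta$). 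Since the theorem statement explicitly says ``almost the same proof'' and points to the proof of Theorem \ref{constant} for the modifications, I would keep this short: state the needed $N_\xi$ and diamond setup, indicate the one-step distributivity, and then say the two-step and limit iterations go through as in the proof of Theorem \ref{Shelah} with ``obeys $c_\delta$'' now meaning ``is constant with value $\varepsilon_\delta$ on the relevant piece of $\eta_\delta$''.
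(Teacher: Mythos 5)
There is a genuine gap, and it sits exactly at the one place where this theorem differs from Theorem \ref{Shelah}: your decision \emph{not} to restrict $T_\xi$ to constant functions. The nodes $t\in T_\xi$ are not initial segments of the uniformizing function being built; they are guesses at the possible values of the (generic, unknown) colorings $c^\beta_\delta$ restricted to $\eta_\delta\cap\delta_\xi$, and $q^\xi_t$ is built to agree with the guess $t$ there precisely so that the union along the correct branch is a legal condition at $\delta$. Since in the m-uniformization forcing every $c^\beta_\delta$ is constant, only constant guesses can be correct, and the paper's proof of Theorem \ref{constant} restricts $T_\xi$ accordingly. This restriction is not cosmetic bookkeeping: it is the whole content of the theorem. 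On a club ladder every $\delta_\xi$ with $\xi$ limit lies in $\eta_\delta$, so $T_{\xi+1}\neq T_\xi$ at every limit $\xi$; there is no longer a set $T^c$ of ``free'' levels at which one can extend the guessed node $q^\xi_{t_\xi}$ to decide $\dot f\restriction \xi$ without first committing to a value of the coloring at $\delta_\xi$. The genericity step in the sketch of Theorem \ref{Shelah} explicitly uses $T_{\xi+1}=T_\xi$ for $\xi\in T^c$; with your full $T_\xi$ on a club ladder, $t_\xi$ has several incompatible extensions into $T_{\xi+1}$, you cannot know which is consistent with the generic coloring, and splitting the genericity extension over all of them destroys the coherence requirement on the lower coordinates (hence the well-definedness of the union along the correct branch). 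This is exactly the ZFC obstruction that makes $\mathrm{Unif}_2$ fail for club ladder systems, which the paper explicitly warns must be confronted.

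The resolution — the actual new idea here — is that for \emph{constant} colorings each node of $T_\xi$ has a \emph{unique} extension into $T_{\xi+1}$: the value at the new point $\delta_\xi$ of $\eta_\delta$ is forced by constancy from the values already guessed below. Hence the tree does not branch across limit levels of $\eta_\delta$, and every limit stage can simultaneously serve as a coloring point and a genericity point. Your remark that ``the construction simplifies'' because every $\xi$ is a coloring point is exactly backwards: that coincidence is the difficulty, and the restricted $T_\xi$ is what resolves it. The single-step distributivity argument and the limit-stage support bookkeeping in your proposal are fine as stated.
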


\begin{theorem}\label{constant}
Relative to the existence of a supercompact cardinal, it is consistent that the following hold: 
\begin{enumerate}
\item GCH,
\item $\omega_2$ is generically supercompact via some countably closed forcing and
\item $\mathrm{Unif}_m(\bar{\eta})$ holds for some club ladder system $\bar{\eta}$ on $S^2_1$.
\end{enumerate}
\end{theorem}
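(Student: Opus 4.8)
\textbf{Proof proposal for Theorem \ref{constant}.}

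The plan is to follow the template of the proof of Theorem \ref{main} (Section \ref{non-stationary}), replacing the ladder system $\bar\eta = \bar\nu\restriction T$ by the full nice club ladder system $\bar\nu$, and replacing the iteration of 2-uniformization forcings by an iteration $\langle P_\gamma,\dot Q_\beta:\gamma\le\kappa^+,\beta<\kappa^+\rangle$ of m-uniformization forcings with respect to $\bar\nu$. As in Section \ref{non-stationary}, we start in $V\models\mathrm{GCH}+\diamondsuit(\omega_1)$ with $\kappa$ supercompact, let $\mathbb P=\mathrm{Coll}(\omega_1,<\kappa)$, build the nice club ladder system $\bar\nu$ in $V^{\mathbb P}$, and set $Q=\mathbb P*P_{\kappa^+}$. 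GCH in $V^Q$ is routine (each $P_{\bar c}$ has size $\le\kappa^+$ and is $\omega_1$-distributive, $\kappa^+$-c.c., by the version of Theorem \ref{Shelah} for m-uniformization stated just above). That $P_{\kappa^+}$ forces $\mathrm{Unif}_m(\bar\nu)$ for the club ladder $\bar\nu$ follows by the usual book-keeping from $\kappa^+$-c.c. plus the fact that the iterands are m-uniformization forcings. So, exactly as before, the only nontrivial point is that $\omega_2=\kappa$ is generically supercompact via a countably closed forcing in $V^Q$.

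For the generic supercompactness, fix $j:V\to M$ witnessing $\theta$-supercompactness, take $G\subset\mathbb P$ generic, $G^*\subset\mathrm{Coll}(\omega_1,[\kappa,<j(\kappa)))$ generic over $V[G]$, and lift to $j^*:V[G]\to M[G*G^*]$. As in Section \ref{non-stationary} one reads off $j^*(\bar\nu)(\kappa)=\{\delta_\xi:\xi<\omega_1\}$ from the internally-approximating chain $\bar N=\langle N_\xi:\xi<\omega_1\rangle$ coded by $G^*$; the key difference is that now the relevant index set is $j^*(\bar\nu)(\kappa)$ itself (all of $\{\delta_\xi:\xi<\omega_1\}$) rather than $\{\delta_{\xi+1}:\xi<\omega_1\}$, since there is no ``$T$'' thinning. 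The tree-of-generics construction of Subsection \ref{tree} goes through with $\Upsilon$ replaced by $\{\delta_\xi:\xi<\omega_1\}=j^*(\bar\nu)(\kappa)$, and the $\diamondsuit(\omega_1)$-sequence (valid in $V[G]$, preserved to $V[G*G^*]$ by countable closure) is copied onto $\{\delta_\xi:\xi<\omega_1\}$ to supply the guessing sequence $\langle t_\xi:\xi<\omega_1\rangle$. The crucial simplification is that, at the $\kappa$-th coordinate, the colorings we must anticipate are \emph{constant} functions $d^\delta_\kappa\equiv\epsilon^\delta$ for $\epsilon^\delta\in 2$; so the ``obeying'' requirement ``$q^\xi_t(\beta)\supset t(\beta)$'' becomes simply that $q^\xi_t(\beta)$, restricted to $j^*(\bar\nu)(\kappa)\cap\mathrm{dom}$, is the appropriate constant on a tail — and a countable initial segment $t(\beta)\in T_\xi$ is just a partial constant-or-not function on $j^*(\bar\nu)(\kappa)\cap\delta_\xi$. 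Because there are only countably many ways to choose the relevant constants along $\mathrm{dom}(t)$ as $\xi$ grows, the tree $\{Q^\xi:\xi<\omega_1\}$ and the compatibility/coherence clauses (\ref{four})--(\ref{six}) are set up exactly as before. Then, building the master condition $r\in j^*(P_{\kappa^+})$ by recursion with the same construction invariant, and using $\omega_1$-distributivity of initial segments of $j^*(P_{\kappa^+})$ (the m-uniformization version of Theorem \ref{Shelah}) to decide the colorings $\langle d^\delta_\kappa:\delta\in j''\kappa^+\rangle$, yields $H\subset P_{\kappa^+}$ generic over $V[G]$ with $q\in H$ and $r'$ a lower bound for $j^{*\prime\prime}H$; lifting once more gives $j^+:V[G*H]\to M[G*G^**h]$ with critical point $\kappa$, $j^+(\kappa)>\theta$, $j^{+\prime\prime}\theta\in N$. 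Finally $R/H$ has a dense countably closed subset in $V[G*H]$ by the verbatim argument of Claim \ref{countablyclosed}, which shows that a putative countable generic sequence through $R'/H$ whose limit is not a condition would force some $t$ simultaneously in and out of $\dot H$, using that $\mathrm{dom}(p(\mu))\in acc(\omega_1)$ and that the tree construction at that stage forces $F_1(\eta)=t$.

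The main obstacle I expect is \emph{verifying that the tree construction and the master condition actually respect the constant-coloring constraint all the way up}: one must check that, at each successor coordinate $j(\beta)$, the union $\bigcup q^\xi_t(\beta)$ taken over $t$ compatible with the decided constant $\bar e$ is indeed a legitimate condition in $j^*(\dot Q_\beta)$ — i.e. that it is eventually equal to the relevant constant on $j^*(\bar\nu)(\kappa)$ and has domain $\le$ some ordinal $<j(\kappa)$ — and that flatness of the $q^\xi_t$ lets us identify $q^\xi_t(\beta)$ with a check-name as in Claim \ref{Genericity+Condition}. Concretely, one needs that for stationarily many $\xi$ the guessed $t_\xi$ is compatible with $\bar e$ \emph{and} $q^{\xi+1}_{t_\xi}$ decides enough, which again comes from $\diamondsuit(\omega_1)$ together with the fact that there are only two possible constants. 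Since constant colorings form a much smaller target than arbitrary ladder colorings, this is genuinely easier than in Section \ref{non-stationary}; the price is purely notational. Hence the proof is: run the Section \ref{non-stationary} argument with $\bar\eta$ replaced by $\bar\nu$, with the tree entries and compatibility relation adapted to constant colorings, invoking the m-uniformization analogue of Theorem \ref{Shelah} in place of Theorem \ref{Shelah}, and everything else carries over unchanged.
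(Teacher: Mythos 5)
Your overall route is the paper's: run the Section \ref{non-stationary} argument with $\bar\eta=\bar\nu$, restrict the tree entries $T_\xi$ to constant functions, and invoke the m-uniformization analogue of Theorem \ref{Shelah}. But the assertion that the tree construction is ``set up exactly as before'' and that Claim \ref{countablyclosed} applies ``verbatim'' passes over the one point where a new idea is required, and it is precisely the point on which the theorem turns. In Section \ref{non-stationary} the index set is $\Upsilon=\{\delta_{\xi+1}:\xi<\omega_1\}$, so at a limit $\beta$ one has $\delta_\beta\notin\Upsilon$ and hence $T_{\beta+1}=T_\beta$; this is what allows the genericity/dense-set requirements to be attached to the diamond-guessed node $t_\beta$, which still lives at level $\beta+1$. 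With the full club ladder, $\delta_\beta\in\Upsilon$ for limit $\beta$, so $T_{\beta+1}\neq T_\beta$ and the guessed $t_\beta$ is no longer a node of level $\beta+1$: one must choose an extension of $t_\beta$ into $T_{\beta+1}$ to carry the generic condition, and the diamond argument only succeeds if the true branch is guaranteed to pass through that particular extension. This is exactly where constancy is used: each $l\in T_\beta$ has a \emph{unique} extension $l^+\in T_{\beta+1}$, because $b(F(\zeta))(\delta_\beta)$ is determined by the constant value of $l(F(\zeta))$ on $\Upsilon\cap\delta_\beta$. (For arbitrary colorings there would be at least two extensions and the guessing would fail at limit levels --- consistently with Shelah's ZFC theorem that $\mathrm{Unif}_2$ fails on club ladder systems.) Your remark that ``there are only countably many ways to choose the relevant constants along $\mathrm{dom}(t)$'' is also not the right accounting (over a countable domain there are continuum many choices), and in any case is not the mechanism that rescues the limit step.

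The same issue propagates to the countable-closure claim for $R/H$: there one must observe that the greatest lower bound $q^\eta_{s^*}$ of $H\restriction\eta$ sits at a level $\eta\in acc(\omega_1)$ with $\delta_\eta\in\Upsilon$, so the offending condition $t$ obeys the \emph{unique} extension $s^+$ of $s^*$ into $T_{\eta+1}$, and only via that observation does the construction at stage $\eta+1$ force $t\in\dot H$. These are small modifications once seen, but they are not ``verbatim,'' and without the uniqueness observation the limit stages of your construction do not go through as written.
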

\begin{proof}
Since the proof is similar to that in Section \ref{non-stationary}, we only indicate the places requiring modifications. We keep in the same notations as in Section \ref{non-stationary}. The ladder system we will force the m-uniformization property on is $\bar{\eta}=\bar{\nu}$. We may assume $\diamondsuit(\omega_1)$ holds in the ground model.

In the first stage to construct the tree of generics, the first modification comes from the definition of $T_\xi$. Here $T_\xi$ only consists of those $t$ such that for any $\beta\in dom(t)$, $t(\beta)$ is constant on its domain.
The second modification comes from the construction at the successor of limit stages. At stage $\beta+1$ where $\beta$ is a limit ordinal, let $p_0=F_1(\beta)$ if $F_1(\beta)\in P_{\kappa^+}$ and $p_0=1_{P_{\kappa^+}}$ otherwise. Given $t\in T_{\beta+1}$, we need to define $q^{\beta+1}_t$. Note that it is no longer the case that $T_\beta=T_{\beta+1}$. However, the \emph{key point} is that any element $l\in T_\beta$ has a unique extension $l^+$ into $T_{\beta+1}$. The key reason is that if $b\in T_{\beta+1}$ satisfies that $b\downarrow \beta =l$, then for any $\zeta<\beta$, $b(F(\zeta))(\delta_\beta)$ is completely determined by $b(F(\zeta))\restriction \delta_\beta = l(F(\zeta))$, since we only deal with constant colorings.

 Let $t'=t\downarrow \beta$. We first define $r_{t}^\beta$. We first extend $q_{t'}^\beta$ to $a_{t}^\beta$ canonically obeying $t$ inside $N_{\beta+1}$, then we define $r_{t}^\beta$ to be $p_0\restriction \gamma_t \wedge a_{t}^\beta$ where $\gamma_t = \sup \{\gamma: p_0\restriction \gamma\leq a_{t}^\beta\restriction \gamma\}$. Exactly as before, we can show: 
\begin{claim}\label{cohere2}
For $t,t'\in T_{\beta+1}$, if $t\restriction \nu = t'\restriction \nu$, then $r^\beta_{t}\restriction \nu=r^\beta_{t'}\restriction \nu$.
\end{claim}

The way to define $q^{\beta+1}_t$ from $r^\beta_t$ is the same as that in Section \ref{tree}.

In the second stage to build a master condition, we need to show: 
\begin{claim}
$R/H=_{def}\{l\in R: \forall p\in P, l\Vdash p\in \dot{H} \rightarrow p\in H, l\Vdash p\not\in \dot{H}\rightarrow p\not\in H\}$ has a dense subset that is countably closed in $V[G*H]$.
\end{claim}

\begin{proof}
Recall $d: \kappa^{++} \leftrightarrow P_{\kappa^+}\cup \{D\subset P_{\kappa^+}: D\text{ is dense open}\}$ is a bijection lying in $V[G]$. Let $R'$ be a dense subset of $R$ consisting of conditions $l$, such that $\mu=_{def} (\kappa^{+++})^V\in dom(l)$ and $l$ decides $F\restriction dom(l(\mu))$, $\langle N_\xi: \xi<dom(l(\mu))\rangle$ and $H\restriction dom(l(\mu))$. We claim that $R'/H$ is countably closed in $V[G*H]$. Suppose not, then let $t\in H$ and $\langle p_i\in R'/H: i<\omega\rangle$ be such that $t\Vdash \langle p_i\in R'/H: i<\omega\rangle, p=\bigcup_{i\in \omega} p_i\not\in R'/H$. Extending $t$ in $H$ if necessary, we may assume $p\Vdash t\not\in \dot{H}$.

Let $\eta=dom(p(\mu))\in \lim (\omega_1)$. Since $p$ decides $F\restriction \eta$, $\langle N_\xi: \xi<\eta\rangle$ and $H\restriction \eta$, we already have sufficient data to define $\{q^\xi_s: s\in T_\xi, \xi<\eta\}$, and also $H\restriction \eta$ is a branch through the tree. Let $q^\eta_{s^*}$ be the greatest lower bound for $H\restriction \eta$ for some $s^*\in T_\eta$. As $t$ already forces $H\restriction \eta \subset \dot{H}$, we know that $t$ extends $q^\eta_{s^*}$. Let $s^+$ be the unique extension of $s^*$ into $T_{\eta+1}$. Since $t\neq q^\eta_{s^*}$ as $p\Vdash q^\eta_{s^*}\in \dot{H}$ and $t\in H$, $t$ must obey $s^+$.

Let $p^*=p\cup \{(\mu, \eta, d^{-1}(t))\}$. We know that $p^*$ forces that $F_1(\eta)=t$. By the construction of the tree at stage $\eta+1$, any node above $q^\eta_{s^*}$ will extend $t$. Therefore, $p^*$ must force that $t\in \dot{H}$, which is a contradiction.\end{proof}
The rest of the proof is the same as in Section \ref{non-stationary}.\end{proof}

\section{From a cheaper assumption}\label{cheaper}

If our goal is mainly GCH, $\neg \diamondsuit(S^2_1)$ and some degree of stationary reflection at $\omega_2$, without worrying about the saturation properties of $\mathrm{NS}_{\omega_1}$, then we can get the model from a much cheaper assumption.

\begin{theorem}\label{weakcompactness}
Relative to the existence of a weakly compact cardinal, it is consistent that 
\begin{enumerate}
\item $\mathrm{GCH}$ holds,
\item $\mathrm{WRP}(\omega_2)$ and
\item there exists a ladder system $\bar{\eta}$ on $S^2_1$ such that $\mathrm{Unif}_2(\bar{\eta})$ holds.
\end{enumerate}
\end{theorem}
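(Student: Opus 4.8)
The plan is to run the same two-part strategy as in Section \ref{non-stationary} — first force GCH and a nice ladder system, then iterate the 2-uniformization forcing — but over a weakly compact cardinal $\kappa$ rather than a supercompact, replacing the generic-supercompactness argument with a reflection argument tailored to $\mathrm{WRP}(\omega_2)$. Starting with a ground model $V \models \mathrm{GCH} + \diamondsuit(\omega_1)$ in which $\kappa$ is weakly compact, I would let $\mathbb{P} = \mathrm{Coll}(\omega_1, <\kappa)$, build in $V^{\mathbb{P}}$ the nice club ladder system $\bar\nu$ of Section \ref{clubladder} (note that the Mahlo-ness used there is implied by weak compactness), set $T = nacc(\omega_1)$ (or $\omega_1$ itself, since we do not need to control the indexing pattern beyond getting a ladder system on $S^2_1$) and $\bar\eta = \bar\nu \restriction T$, and then do the $<\kappa$-support iteration $P_{\kappa^+}$ of 2-uniformization forcings with respect to $\bar\eta$. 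By Theorem \ref{Shelah}, $Q = \mathbb{P} * P_{\kappa^+}$ preserves all cardinals, forces GCH and forces $\mathrm{Unif}_2(\bar\eta)$. So the only item to check is that $\mathrm{WRP}(\omega_2)$ holds in $V^Q$.

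For $\mathrm{WRP}(\omega_2)$, recall that $\mathrm{WRP}(\omega_2)$ is equivalent to $\mathrm{Refl}(S^2_0)$ together with the reflection of stationary subsets of $[\omega_2]^\omega$, and in fact for reflecting a stationary $S \subseteq [\omega_2]^\omega$ it suffices (by a standard argument) to find, for each club $C \subseteq [\omega_2]^\omega$, an internally club $W \in [\omega_2]^{\aleph_1}$ with $S \cap [W]^\omega$ stationary. The key observation is that weak compactness of $\kappa$ survives the preparatory collapse and the iteration in the following weak sense: for a stationary $S \subseteq [\omega_2]^\omega$ in $V^Q$, pick a name and an elementary submodel / $\Pi^1_1$-reflection witness at an inaccessible $\alpha < \kappa$ below which the relevant fragment of the iteration, the ladder system, the name for $S$, and a name for a club $C$ all reflect; the weak compactness of $\kappa$ gives a transitive $N$ with ${}^{<\kappa}N \subseteq N$ and an embedding $j\colon N \to M$, $\mathrm{crit}(j) = \kappa$, and one lifts $j$ through $\mathbb{P}$ and through $P_{\kappa^+}$ using the flatness and $\omega_1$-distributivity from Theorem \ref{Shelah} together with the $\kappa^+$-c.c. to build a master condition (as $P_{\kappa^+}$ is countably closed in the relevant sense and has size $\kappa^+$, the master condition for $j''(\text{generic below }\kappa^+)$ exists in the extension). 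Then $\sup j''\omega_2 = j(\omega_2) \cap$ (something of size $\aleph_1^M$), and the reflection of $S$ at the witness $W = j''\omega_2$ (more precisely at the induced internally club set) follows by pulling back along $j$, exactly as in the proof of Lemma \ref{GenericSupercompactnessTpattern} and the theorem of \cite{MR924672} cited afterward, but with the generic embedding replaced by the (lifted) weakly compact embedding.

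Alternatively — and this is probably the cleanest route — I would cite the known fact that if $\kappa$ is weakly compact then $\mathrm{Coll}(\omega_1, <\kappa)$ forces $\mathrm{WRP}(\omega_2)$ (this is folklore, going back to the analysis of the Levy collapse of a weakly compact; see also the game reflection literature), and then argue that $\mathrm{WRP}(\omega_2)$ is preserved by the further forcing $P_{\kappa^+}$. Preservation is the delicate point: $P_{\kappa^+}$ is $\omega_1$-distributive and $\kappa^+$-c.c., hence adds no new countable sequences and no new subsets of $\omega_1$, and one checks that a stationary subset of $[\omega_2]^\omega$ in the extension contains a ground-model-definable stationary set or else is captured by a name over a small initial segment $P_\beta$ ($\beta < \kappa^+$); since $P_{\kappa^+}/P_\beta$ is again of the same type and $\omega_1$-distributive, no stationary subset of $[\omega_2]^\omega$ can be destroyed. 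The main obstacle I anticipate is precisely this preservation/reflection step: one must verify that the tail of the iteration does not kill reflection of stationary subsets of $[\omega_2]^\omega$, and that the weakly compact embedding (or the $\mathrm{WRP}$ witness it produces) can genuinely be lifted through $P_{\kappa^+}$ — this requires the same master-condition construction as in Section \ref{non-stationary}, only simpler, since over a weakly compact there is no need for the elaborate tree-of-generics machinery (there is no target model requiring a $j''\theta \in M$ for $\theta$ far above $\kappa$), and a direct lifting using flatness, $\omega_1$-distributivity, and a greatest-lower-bound argument along the image of a generic suffices.
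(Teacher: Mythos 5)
Your overall architecture (Levy collapse of a weakly compact, then the $<\kappa$-support uniformization iteration, then a reflection argument via a weakly compact embedding) matches the paper's, and your identification of the delicate point --- lifting the embedding through $P_{\kappa^+}$ --- is correct. But there is a genuine gap at exactly that point. You assert that over a weakly compact ``there is no need for the elaborate tree-of-generics machinery'' and that ``a direct lifting using flatness, $\omega_1$-distributivity, and a greatest-lower-bound argument along the image of a generic suffices.'' This is false, and the paper's proof in fact runs the full Section \ref{non-stationary} construction inside the target model. The reason the tree is unavoidable: the embedding $j$ obtained from Hauser's characterization (Theorem \ref{Hauser}) has domain a transitive $M$ of size $\kappa$, whose image of $P_{\kappa^+}$ is only a $\kappa$-sized fragment $P_\delta$ with $\delta = X\cap\kappa^+$; there is no ambient generic for $P_\delta$ over $M[G]$ handed to you, so you must \emph{construct} a filter $H\subset P_\delta$ meeting every dense set of $M[G]$ inside $N[G*G^*]$, while simultaneously keeping $j''H$ below a single condition of $j^*(P_\delta)$. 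That master condition must, at each coordinate $j(\beta)$, obey the ladder coloring $d^{j(\beta)}_\kappa$ at the new ladder point $\kappa\in j^*(S^2_1)$ --- a coloring that is only decided by the generic for $j^*(P_\delta)\restriction j(\beta)$, hence not known when you commit to a linear descending sequence. Anticipating all possible colorings with the $\diamondsuit(\omega_1)$-guided tree $\{q^\xi_t : t\in T_\xi\}$ is precisely how this circularity is broken; Shelah's ZFC theorem that $\mathrm{Unif}_2$ fails for club ladder systems shows that no naive linear construction can succeed here. You also need the dichotomy that sets up the contradiction --- $M$ believes the counterexample name $\dot S$ is forced non-reflecting while $N$ believes it is forced stationary (by upward absoluteness into $V[W]$) --- which requires choosing the hull $X$ \emph{after} fixing $\dot S$, a structural feature absent from your write-up.

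Your fallback route (cite that $\mathrm{Coll}(\omega_1,<\kappa)$ forces $\mathrm{WRP}(\omega_2)$ and then ``preserve'' it through $P_{\kappa^+}$) does not close the gap either. Countable closure of $P_{\kappa^+}$ does give preservation of stationarity of old subsets of $[\omega_2]^\omega$ (properness), but the problem is \emph{new} stationary sets: a stationary $S\subset[\omega_2]^\omega$ added by $P_{\kappa^+}$ need not contain a ground-model stationary set, and while the $\kappa^+$-c.c.\ lets you capture its name over some $P_\beta$, $\beta<\kappa^+$, the model $V^{\mathbb{P}*P_\beta}$ is no longer the plain Levy-collapse extension, so the folklore fact does not apply to it; establishing $\mathrm{WRP}$ there requires the very lifting-through-$P_\beta$ argument you were trying to avoid. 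The argument is circular as stated. To repair the proof, follow the paper: assume a condition forces a non-reflecting stationary $\dot S\in H(\kappa^+)$, collapse a $<\kappa$-closed hull $X\prec H(\theta')$ containing $\dot S$ to $M$, apply Hauser to get $j:M\to N$ with $j,M\in N$, and build $H$ and the master condition by the tree-of-generics construction in $N[G*G^*]$, using that $P_\delta$ has only $\kappa$-many dense sets in $N[G]$ and $j''\delta\in N$.
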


We need the following characterization of weakly compact cardinals due to Hauser.

\begin{theorem}[Hauser \cite{MR1166462}]\label{Hauser}
The following are equivalent for an inaccessible cardinal $\kappa$: 
\begin{enumerate}
\item $\kappa$ is weakly compact,
\item for any transitive set $M$ of size $\kappa$ with ${}^{<\kappa}M\subset M$ and $\kappa\in M$, there exists an elementary embedding $j: M\to N$ where $N$ is transitive of size $\kappa$, ${}^{<\kappa}N\subset N$, $\mathrm{crit}(j)=\kappa$ and $j, M\in N$.
\end{enumerate}
\end{theorem}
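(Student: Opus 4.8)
The plan is to prove the two implications separately, treating $(1)\Rightarrow(2)$ as the substantial one and $(2)\Rightarrow(1)$ as routine, since $\kappa$ is assumed inaccessible throughout.

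For $(2)\Rightarrow(1)$, it suffices to verify the tree property (equivalently $\Pi^1_1$-indescribability), which together with inaccessibility is weak compactness. Given a $\kappa$-tree $T$ with nodes in $V_\kappa$, use inaccessibility to find a transitive $M$ of size $\kappa$ with ${}^{<\kappa}M\subseteq M$, $\kappa\in M$ and $T\in M$, $T\subseteq M$. Apply (2) to get $j\colon M\to N$ with $\mathrm{crit}(j)=\kappa$. Since $j\restriction V_\kappa=\mathrm{id}$, for $\alpha<\kappa$ the level $\mathrm{level}_\alpha(j(T))=\mathrm{level}_\alpha(T)$ and the tree order below $\kappa$ is preserved. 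As $j(\kappa)>\kappa$, pick any node $b$ of $j(T)$ at level $\kappa$; its set of predecessors at levels ${<}\kappa$ is a cofinal branch of $T$ living in $V$. Thus $T$ has a branch. Note that (2) is strictly stronger than the embedding property actually used here, so the extra clauses ${}^{<\kappa}N\subseteq N$ and $j,M\in N$ are simply irrelevant for this direction.

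For $(1)\Rightarrow(2)$, fix a $\kappa$-model $M$ as in the statement. As $M$ need not model $\mathrm{ZF}^-$, first pass to a transitive $\hat M\models\mathrm{ZF}^-$ of size $\kappa$ with ${}^{<\kappa}\hat M\subseteq\hat M$, $\kappa\in\hat M$, $M\in\hat M$ and $\hat M\models$ ``$|M|=\kappa$'', obtained by taking a $<\kappa$-closed elementary submodel of $H(\kappa^+)$ containing $M\cup\{M\}\cup(\kappa+1)$ and collapsing (one checks $\mathrm{trcl}(\{M\})=M\cup\{M\}$ is fixed, so $M$ survives). The core step is to construct, from weak compactness, an $\hat M$-ultrafilter $U$ on $\kappa$ that is normal, $<\kappa$-complete for sequences lying in $\hat M$, and \emph{weakly amenable}; here I would use the equivalence of weak compactness with $\Pi^1_1$-indescribability (or the tree property), coding $\hat M$ by $A\subseteq\kappa$, enumerating $P(\kappa)\cap\hat M$ in order type $\kappa$, and taking a branch through the $\kappa$-tree of consistent normal decisions. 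Form $\hat{\jmath}\colon\hat M\to\hat N=\mathrm{Ult}(\hat M,U)$ with $\hat N$ the transitive collapse; normality gives $\mathrm{crit}(\hat{\jmath})=\kappa$. \emph{Well-foundedness} uses the closure of $\hat M$: a descending chain $[f_0]\ni[f_1]\ni\cdots$ gives $Y_n=\{\xi:f_{n+1}(\xi)\in f_n(\xi)\}\in U$, and since $\langle f_n:n<\omega\rangle\in\hat M$ so is $\langle Y_n\rangle$, whence $\bigcap_n Y_n\in U$ is nonempty and yields an $\in$-descent inside $\hat M$, a contradiction. The same closure gives ${}^{<\kappa}\hat N\subseteq\hat N$: for $\gamma<\kappa$ the representing functions of $\langle[f_i]:i<\gamma\rangle$ lie in $\hat M$, and $F(\xi)=\langle f_i(\xi):i<\gamma\rangle\in\hat M$ represents the sequence. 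Setting $N=\hat{\jmath}(M)$ and $j=\hat{\jmath}\restriction M$, elementarity of $j\colon(M,\in)\to(N,\in)$ and $\mathrm{crit}(j)=\kappa$ follow since $\hat M\models\mathrm{ZF}^-$ computes satisfaction correctly, and $N$ is a transitive $\kappa$-model with $|N|=\kappa$ (as $j(\kappa)<\kappa^+$).

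The \textbf{main obstacle} is the final clause $j,M\in N$, which the plain ultrapower does not deliver: $N=\hat{\jmath}(M)$ generally contains neither $M$ nor $j$ as elements. This is precisely what weak amenability of $U$ is designed to address: weak amenability implies that $\hat{\jmath}$ restricted to any $\hat M$-set of size $\le\kappa$ --- in particular $\hat{\jmath}\restriction M$, and hence its domain $M$ --- is an element of $\hat N$. The remaining task is to absorb these into a transitive $\kappa$-model \emph{that still serves as the codomain of an elementary} $j$; I would locate inside $\hat N$ a transitive $\kappa$-model $N^*$ with $\hat{\jmath}''M\prec N^*$ and $M,\hat{\jmath}\restriction M\in N^*$, exploiting that $\hat N$ is $<\kappa$-closed and reflects enough of the ambient indescribability. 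Reconciling the requirement that $\hat{\jmath}''M$ be elementary in the target with the requirement that the target contain $M$ and $j$ is the genuinely delicate point and the heart of Hauser's argument; everything else is bookkeeping with the ultrapower.
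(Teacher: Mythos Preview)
The paper does not prove this theorem at all: it is stated with a citation to Hauser \cite{MR1166462} and then invoked as a black box in the proof of Theorem~\ref{weakcompactness}. There is therefore nothing in the paper to compare your proposal against.

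That said, your outline follows the standard route. The direction $(2)\Rightarrow(1)$ is fine. For $(1)\Rightarrow(2)$, enlarging $M$ to a $\kappa$-model $\hat M\models\mathrm{ZF}^-$ and building a weakly amenable normal $\hat M$-ultrafilter is exactly the intended strategy, and your well-foundedness and closure arguments are correct. You also correctly locate the real issue: weak amenability gives $M,\,\hat{\jmath}\restriction M\in\hat N$, but the statement demands $M,j\in N$ where $N$ is the \emph{codomain} of $j$, not the ambient $\hat N$. Your final paragraph gestures at ``locating inside $\hat N$ a transitive $\kappa$-model $N^*$'' with the right properties, but this is not an argument; in particular, taking $N=\hat{\jmath}(M)$ does not obviously work, and you have not said what $N^*$ is or why $\hat{\jmath}''M\prec N^*$. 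Hauser's resolution is to exploit weak amenability more carefully: since $P(\kappa)\cap\hat M=P(\kappa)\cap\hat N$, one can arrange (by choosing $\hat M$ appropriately in advance, or by an additional reflection/iteration step) that the data coding $M$ and $j$ land inside the image model. As written, your sketch stops precisely at the point where the nontrivial content begins, which you yourself acknowledge.
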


\begin{proof}[Proof of Theorem \ref{weakcompactness}]

Let $\kappa$ be a weakly compact cardinal. We may assume the ground model satisfies $\diamondsuit(\omega_1)$ since if not, we can always force it.
The forcing will be the same as the forcing used in the proof of Theorem \ref{main} (see Section \ref{non-stationary}). In particular, the ladder system to force the 2-uniformization property on is also the same as the one in Section \ref{non-stationary}. The proof is similar so we will only sketch some main points. We will keep the same notations as in Section \ref{non-stationary}.

Suppose for the sake of contradiction that there exists $(p,\dot{q})\in \mathrm{Coll}(\omega_1, <\kappa)*P_{\kappa^+}$ such that $(p,\dot{q})\Vdash ``$ there exists a stationary $S\subset [\omega_2]^\omega$ that does not reflect''. Let $\dot{S}$ be a $\mathrm{Coll}(\omega_1, <\kappa)*P_{\kappa^+}$-name for the set $S$ above. Without loss of generality, we may assume that $\dot{S}\in H(\kappa^+)$. The reason is that $\mathrm{Coll}(\omega_1, <\kappa)*P_{\kappa^+} \subset H(\kappa^+)$ and $\mathrm{Coll}(\omega_1, <\kappa)*P_{\kappa^+}$ is $\kappa^+$-c.c.

Back in $V$, find $X\prec H(\theta')$, where $\theta'$ is a sufficiently large regular cardinal, containing $\mathrm{Coll}(\omega_1, <\kappa)*P_{\kappa^+}$, $\dot{S}$, $(p,\dot{q})$, and ${}^{<\kappa}X\subset X$ of size $\kappa$. Let $M$ be the transitive collapse of $X$. Note that $\dot{S}$ and $(p,\dot{q})$ are fixed by the transitive collapse. Let $\mathrm{Coll}(\omega_1, <\kappa)*\bar{P}$ be the image of $\mathrm{Coll}(\omega_1, <\kappa)*P_{\kappa^+}$ under the transitive collapse of $X$. Let $\delta=X\cap \kappa^+$.
 It is not hard to see that $\mathrm{Coll}(\omega_1, <\kappa)*\bar{P}=\mathrm{Coll}(\omega_1, <\kappa)* P_{\delta}$.
Apply Theorem \ref{Hauser} to get elementary $j: M\to N$ such that $N$ is transitive of size $\kappa$, ${}^{<\kappa}N\subset N$, $\mathrm{crit}(j)=\kappa$ and $j, M\in N$.

Notice that 
\begin{itemize}
\item $M\models (p,\dot{q})\Vdash_{\mathrm{Coll(\omega_1, <\kappa)}*P_{\delta}} \dot{S}$ is non-reflecting. This just follows from the elementarity of $X$ and the fact that the transitive collapse map from $X$ to $M$ fixes $(p,\dot{q})$ and $\dot{S}$. 
\item $N\models (p,\dot{q})\Vdash_{\mathrm{Coll(\omega_1, <\kappa)}*P_{\delta}} \dot{S}$ is stationary. Otherwise, there exists $$(p',\dot{q}')\leq_{\mathrm{Coll(\omega_1, <\kappa)}*P_{\delta}} (p,\dot{q})$$ forcing over $N$ that $\dot{S}$ is not stationary. Take $W\subset\mathrm{Coll(\omega_1, <\kappa)}*P_{\kappa^+}$ generic over $V$ containing $(p',\dot{q}')$, then in $N[W]$, $(\dot{S})^W$ is not stationary. But in $V[W]\supset N[W]$, $(\dot{S})^W$ is stationary by the assumption, which is impossible.

\end{itemize}

Let $G\subset \mathrm{Coll}(\omega_1, <\kappa)$ containing $p$ be generic over $V$ and $G^*\subset \mathrm{Coll}(\omega_1, [\kappa, <j(\kappa)))$ be generic over $V[G]$ (the forcing is equivalent to $\mathrm{Coll}(\omega_1, \kappa)$ from the perspective of $V[G]$).

We can lift $j$ to $j^*: M[G]\to N[G*G^*]$. 
Our goal is to build in $N[G*G^*]$ a generic $H\subset P_\delta$ containing $q=(\dot{q})^G$ over $N[G]$, such that in $N[G*H]$, $\mathrm{Coll}(\omega_1, [\kappa, <j(\kappa)))/H$ is countably closed and ${j^*}'' H$ has a lower bound $r$ in $j^*(P_\delta)$. Suppose this can be accomplished. Let $h\subset j^*(P_\delta)$ containing $r$ be generic over $N[G*G^*]$. Then we can lift $j^*$ further to $j^+: M[G*H]\to N[G*G^**h]$. Let $S=(\dot{S})^{G*H}$. Then $S\in M[G*H] \subset N[G*H]$. Furthermore, we know that $S$ is stationary in $N[G*H]$. Since the forcing $\mathrm{Coll}(\omega_1, [\kappa, <j(\kappa)))/H$ is countably closed in $N[G*H]$, $S$ remains stationary in $N[G*G^*]$. Since $j^*(P_\delta)$ is countably closed in $N[G*G^*]$, $S$ remains stationary in $N[G*G^**h]$. Therefore, $j^+(S)\cap [\kappa]^\omega \supseteq S$ is stationary in $N[G*G^**h]$. By the elementarity of $j^+$, we conclude that $S$ reflects in $M[G*H]$, which contradicts with our assumption that $S$ is nonreflecting in $M[G*H]$.

To build such an $H$ satisfying the requirements as above, we basically follow the same argument from Section \ref{non-stationary} in $N[G*G^*]$. We will not repeat the argument but the salient points are: 
\begin{itemize}
\item $N\models \diamondsuit(\omega_1)$,
\item $j(\kappa)$ is a strongly inaccessible cardinal in $N[G]$, 
\item $P_\delta$ has size $\kappa$ in $N[G]$ since $M\in N$, which implies the number of dense open subsets of $P_\delta$ in $N[G]$ is $<j(\kappa)$ and 
\item $j'' \delta \in N$.   \qedhere
\end{itemize}\end{proof}

\begin{remark}
By adapting the forcing variations in Section \ref{variations} to the level of a weakly compact cardinal, we can get: relative to the existence of a weakly compact cardinal,
\begin{enumerate}
\item it is consistent that there exists a stationary co-stationary $T\subset \omega_1$ such that 
\begin{enumerate}
\item $\mathrm{GCH}$ holds,
\item $\mathrm{WRP}(\{x\in [\omega_2]^\omega: x\cap \omega_1\in T\})$ and
\item there exists a ladder system $\bar{\eta}$ indexed by $T^c$ such that $\mathrm{Unif}_2(\bar{\eta})$ holds.
\end{enumerate}

\item it is consistent that the following hold: 
\begin{enumerate}
\item GCH,
\item $\mathrm{WRP}(\omega_2)$ and
\item $\mathrm{Unif}_m(\bar{\eta})$ holds for some club ladder system $\bar{\eta}$ on $S^2_1$.
\end{enumerate}

\end{enumerate} 

\end{remark}

\begin{remark}
In the model constructed in the proof of Theorem \ref{weakcompactness}, we have GCH, $\mathrm{Refl}(S^2_0)$, $\neg \diamondsuit(S^2_1)$ and there does not exist a $\square(\omega_2, \omega_1)$-sequence $\langle \mathcal{C}_\alpha: \alpha<\omega_2\rangle$ such that $\{\alpha\in S^2_0: |\mathcal{C}_\alpha|\leq \aleph_0\}$ is stationary. The last item can be seen to hold in that model by a similar argument to the one given showing that $\mathrm{WRP}(\omega_2)$ holds. It is also well-known that $\mathrm{WRP}(\omega_2)$ implies $\mathrm{Refl}(S^2_0)$.

Any model satisfying GCH and the $\aleph_2$-Suslin hypothesis (namely, there is no $\aleph_2$-Suslin tree) will need to satisfy the configuration above, see for example \cite{MR309729}, \cite{MR4019621}, \cite{MR485361}, \cite{MR3628222}.
\end{remark}

\section{Some questions and remarks}\label{questions}

\begin{question}
Is it possible to produce a model satisfying
\begin{enumerate}
\item GCH,
\item $\mathrm{Refl}(S^2_0)$ and 
\item there exists a ladder system $\bar{\eta}$ on $S^2_1$ such that $\mathrm{Unif}_2(\bar{\eta})$ holds
\end{enumerate}
from the existence of a Mahlo cardinal?
\end{question}

A rough idea will be to mix the iterations to kill non-reflecting stationary subsets with the iterations to add witnesses to the uniformization property. The difference is that the first forcing is $\omega_1$-\emph{descendingly complete}, in the sense that any $\omega_1$-length decreasing sequence of conditions will have a lower bound. But it is in general not countably closed. While the situation for the second forcing is the other way round.

The next question arises from the technicality in the proof of Theorem \ref{stationaryversion}, where one fixes a stationary co-stationary set first, managing the generic large cardinal property on the set and the uniformization property on its complement. One may wonder if there is a ZFC reason.

\begin{question}\label{solved}
Is the following jointly consistent (relative to appropriate large cardinal assumptions)?
\begin{enumerate}
\item GCH,
\item $\omega_2$ is generically supercompact via some countably closed forcing, and
\item there exists a ladder system $\bar{\eta}$ on $S^2_1$ indexed by some stationary subset $T\subset \omega_1$ such that $\mathrm{Unif}_2(\bar{\eta})$ holds
\end{enumerate}
\end{question}

Assaf Rinot pointed out that the hypothesis of Question \ref{solved} is not consistent. More specifically, he showed that if there exists a ladder system $\bar{\eta}$ on $S^2_1$ indexed by some stationary subset $T\subset \omega_1$ such that $\mathrm{Unif}_2(\bar{\eta})$ holds, then $\neg \mathrm{Refl}_T(S^2_0)$ (see Lemma \ref{GenericSupercompactnessTpattern}). To see this, note that $\mathrm{Unif}_2(\bar{\eta})$ implies $\mathrm{Unif}_{\aleph_1}(\bar{\eta})$ (see Remark \ref{UnifImpliesCH}). Consider the ladder coloring $\bar{c}=\langle c_\delta: \delta\in S^2_1\rangle$ such that for any $\delta\in S^2_1$, $c_\delta$ is injective. Suppose $g: \omega_2 \to \omega_1$ is the uniformizing function. Let $\alpha<\omega_1$ be such that $S=g^{-1}(\alpha)\cap S^2_0$ is stationary. Then it is easy to see that $S$ witnesses $\neg \mathrm{Refl}_T(S^2_0)$. As a result, Theorem \ref{stationaryversion} is optimal in a sense.

There are many other compactness principles at $\omega_2$ whose relationship with $\diamondsuit(S^2_1)$ under $2^{\omega_1}=\omega_2$ can be investigated. For example, a theorem of Baumgartner \cite{MR776640} (see also the end of page 68 in \cite{MR1359154}) states that:
%
If $2^{\omega_1}=\omega_2$ and there does not exist a \emph{weak Kurepa tree}, namely a $\aleph_1$-sized tree of height $\omega_1$ with $>\aleph_1$ many branches, 
then $\diamondsuit^*(S^2_1)$ holds.\textsuperscript{1}\footnotetext{\textsuperscript{1}I would like to thank the referee for pointing this out.}

%

The nonexistence of a weak Kurepa tree necessarily implies the failure of CH.
Known models of the hypothesis include any model of PFA and the Mitchell model for the tree property at $\aleph_2$ from \cite{MR0313057}. But it is also known that the tree property at $\aleph_2$ is consistent with the existence of a Kurepa tree \cite{MR3749402}.

Recall Remark \ref{UnifImpliesCH} that if there exists a ladder system $\bar{\eta}$ on $S^2_1$ such that $\mathrm{Unif}_2(\bar{\eta})$ holds, then CH holds. On the other hand, $\neg \diamondsuit(S^2_1)$ is compatible with the continuum being arbitrarily large, since no c.c.c forcing can add a $\diamondsuit(S^2_1)$-sequence over a model of $\neg \diamondsuit(S^2_1)$.


\begin{question}
Is it consistent that the tree property holds at $\aleph_2$, $2^{\omega_1}=\omega_2$ and $\neg \diamondsuit(S^2_1)$?
\end{question}

\section{Acknowledgment}

I want to thank Menachem Magidor, Assaf Rinot and Spencer Unger for stimulating discussions, comments and suggestions.
I am grateful to the referee for an extremely careful reading and a comprehensive list of suggestions and corrections.

\bibliographystyle{alpha}
\bibliography{bib}

\Addresses

\end{document}